\documentclass[11pt,reqno]{amsproc}
\usepackage[margin=1in]{geometry}
\usepackage{amsmath, amsthm, amssymb}
\usepackage[colorlinks=true, pdfstartview=FitV, linkcolor=blue,citecolor=blue, urlcolor=blue]{hyperref}
\usepackage[abbrev,lite,nobysame]{amsrefs}
\usepackage{times}
\usepackage[usenames,dvipsnames]{color}

\usepackage{mathtools}


\mathtoolsset{showonlyrefs=true}
\usepackage{dsfont}

\def\eps{\varepsilon}
\def\e{{\rm e}}

\def\Re{{\rm Re}}
\def\dd{{\rm d}}
\def\uu {\boldsymbol{u}}
\def\vv {\boldsymbol{v}}
\def\ff {{\widehat f}}
\def\ddt{{\frac{\dd}{\dd t}}}

\def\R {\mathbb{R}}
\def\u {\boldsymbol{u}}

\def\N {\mathbb{N}}
\def\H {{\dot H}}

\def\ff {{\widehat f}}

\def\ZZ {{\mathbb Z}}

\def \l {\langle}
\def \r {\rangle}
\def\T {{\mathbb T}}

\def\de{{\partial}}

\newtheorem{proposition}{Proposition}[section]
\newtheorem{theorem}[proposition]{Theorem}
\newtheorem{corollary}[proposition]{Corollary}
\newtheorem{lemma}[proposition]{Lemma}
\theoremstyle{definition}

\newtheorem{remark}[proposition]{Remark}

\numberwithin{equation}{section}

\title[Enhanced dissipation time-scales and mixing rates]{On the relation between enhanced dissipation time-scales and mixing rates}

\author[M. Coti Zelati, M. G. Delgadino and T. M. Elgindi]{Michele Coti Zelati, Matias G. Delgadino and Tarek M. Elgindi}

\address{Department of Mathematics, Imperial College London, London, SW7 2AZ, UK}
\email{m.coti-zelati@imperial.ac.uk}
\email{m.delgadino@imperial.ac.uk}

\address{Department of Mathematics, UC San Diego, La Jolla, CA 92093}
\email{telgindi@ucsd.edu}
\subjclass[2000]{35K15, 35Q35, 37D20, 37A25, 76F25}

\keywords{Mixing, enhanced dissipation, inviscid damping, advection-diffusion equations, Anosov flows}

\begin{document}

\begin{abstract}
We study diffusion and mixing in different linear fluid dynamics models, mainly related to incompressible flows. 
In this setting, mixing is a purely advective effect which causes a transfer of energy to high frequencies. 
When diffusion is present, mixing enhances the dissipative forces. This phenomenon is referred to as enhanced dissipation, namely
the identification of a time-scale faster than the purely diffusive one. 
We establish a precise connection between quantitative mixing rates in terms of decay of negative Sobolev norms
and enhanced dissipation time-scales. The proofs are based on a contradiction argument that takes advantage of
the cascading mechanism due to mixing, an estimate of the distance between the inviscid and viscous dynamics, and
of an optimization step in the frequency cut-off. 

Thanks to the generality and robustness of our approach, we are able to apply our abstract results to a number of problems. 
For instance, we prove that contact Anosov flows obey logarithmically fast dissipation time-scales. To the best of our knowledge, 
this is the first example of a flow that induces an enhanced dissipation time-scale faster than polynomial.
Other applications include passive scalar evolution in both planar and radial settings and fractional diffusion.

\end{abstract}


\maketitle


\section{Introduction}
This article deals with the so-called mixing/enhanced dissipation mechanism
in a large class of fluids and hydrodynamic stability problems. In general, the two main sources of mixing of a substance in
a liquid, or of a liquid with itself, are diffusion and advection. In many instances, advection is responsible for a faster dissipation rate, giving 
rise to the so-called enhanced dissipation/diffusion effect. From the mathematical viewpoint, the precise
quantification of this phenomenon for general flows is a very challenging problem. 

To fix ideas, let us consider a two-dimensional periodic domain $\T^2$ and a passive scalar $f:[0,\infty)\times \T^2\to \R$ 
that is advected by a smooth divergence-free (i.e. incompressible) velocity field $\uu:\T^2\to \R^2$, and therefore satisfies the initial-value problem
\begin{align}\label{eq:inviscidpass}
\begin{cases}
\de_tf+\uu\cdot\nabla f=0,\\
 f(0)=f^{in},
\end{cases}
\end{align}
for a \emph{mean-free} initial datum $f^{in}\in L^2$. The goal of this paper is to clarify and \emph{quantify} the connection between the 
decay properties of the solution to \eqref{eq:inviscidpass}, and those of its viscous counterpart
\begin{align}\label{eq:viscouspass}
\begin{cases}
\de_t f^\nu+ \uu\cdot\nabla f^\nu=\nu \Delta f^\nu,\\
f^\nu(0)=f^{in},
\end{cases}
\end{align}
where $\nu\in(0,1)$ is the diffusivity coefficient, proportional to the inverse P\'eclet number. Hereafter, we make more precise the concept
of \emph{mixing} for \eqref{eq:inviscidpass} and we explain how it acts to enhance the dissipative
forces. This phenomenon is referred to as \emph{enhanced dissipation}, namely
the identification of a time-scale faster than the purely diffusive one.
We remark here that \eqref{eq:inviscidpass}-\eqref{eq:viscouspass} constitute
only a special case of our general setting (see Section \ref{sec:abstract}), which covers
a variety of examples (see Section \ref{sub:examples}).

\subsection{Mixing in fluid flows}
In general, mixing refers to a cascading mechanism that transfers information (such as energy, enstrophy, etc.) 
to smaller and smaller spatial scales (or higher and higher frequencies), in a way that is time reversible and conservative for 
finite times but results in an irreversible loss of information as $t \to \infty$. The precise
definition of mixing really depends on the physics of the problem under study, and its quantification for infinite-dimensional systems 
is fundamental to deeply understand the dynamics. It is fairly accurate to think of mixing as a stabilizing mechanism for certain
stationary structures. It generates damping effects:  in kinetic theory this is known as \emph{Landau damping}, and was only recently
understood in a mathematically rigorous way in \cite{MV11}, 
while in fluid dynamics it is called \emph{inviscid damping} (studied even more recently in \cites{BM15,BMV14} at the nonlinear
level).

A way to quantify mixing for \eqref{eq:inviscidpass} is via the homogeneous Sobolev $\H^{-1}$ norm, as it provides an
averaged measure of the characteristic length-scale of the oscillations of the solution \cites{LTD11}. The velocity field $\uu$ is
called mixing if for every $f^{in}\in \H^1$, the corresponding solution $f$ to \eqref{eq:inviscidpass} satisfies
\begin{align}\label{eq:decayH1}
\lim_{t\to\infty}\|f(t)\|_{\H^{-1}}=0.
\end{align}
This definition agrees with the definition of mixing in the sense of ergodic theory (see \cite{LTD11}), and is equivalent to 
$f(t)$ being weakly convergent in $L^2$ to 0 as $t\to\infty$, due to the conservation of the $L^2$ norm. In turn, this describes the idea of a transfer of information to
higher frequencies, since it means that for every $n\in \ZZ^2\setminus\{0\}$ each Fourier mode $\ff_n(t)$ vanishes
in the long-time limit.

There have been numerous recent contributions to this field: without any aim of completeness, we mention the works on 
lower bounds on mixing rates \cites{IKX14,Seis13,LLNMD12}, mixing and regularity \cites{Jabin16,YZ17,Bressan03}, and,
more relevant for our discussion,
quantification of mixing rates in passive scalars  \cites{ACM16,CLS17,Zillinger18, BCZ15}  (see also \cites{Liv04,BDL18,sinai1961geodesic,anosov1967some} and references therein for a dynamical system viewpoint)
and two-dimensional Euler equations linearized around shear flows \cites{Zillinger14,Zillinger15,WZZ15,WZZkolmo17,WZZ17,GNRS18} and vortices \cites{CZZ18,Zcirc17,BCZV17}. 

In this article, we are interested in understanding how a quantification of \eqref{eq:decayH1} through 
a precise decay rate (either polynomial or exponential) affects the dissipative properties of the viscous problem \eqref{eq:viscouspass}.
We describe this in the next paragraph.

\subsection{Enhanced dissipation in fluid flows}
As it turns out, mixing is intimately connected with the so-called \emph{enhanced dissipation} effects due to the presence of
a fluid flow, which speeds up the dissipation rate and induces a dissipation time-scale faster than that
of the diffusion one alone. Let us consider the solution $f^\nu$ to the advection-diffusion equation \eqref{eq:viscouspass}. 
A simple $L^2$ estimate that uses the incompressibility of $\uu$ and the Poincar\'e 
inequality reads
\begin{align}
\|f^\nu(t)\|_{L^2}\leq \|f^{in}\|_{L^2}\e^{-\nu t}.
\end{align}
In fact, not much information is used about the flow $\uu$ other than incompressibility, and the decay rate is that of the heat equation, 
from which the natural \emph{diffusive}  time-scale $O(\nu^{-1})$ appears.
However, for small $\nu$, we expect the inviscid mixing to be the leading order dynamics (at least 
for some time) and hence we can predict a faster decay rate than the one prescribed by the heat equation. 
The important behavior to detect consists of a cascading mechanism due to the inviscid mixing, 
and its interaction with a small diffusion of order $\nu$. 
This effect has been called alternatively  \emph{shear-diffuse mechanism}, 
and has been studied many times in linear and some nonlinear settings in both mathematics \cites{Deng2013,BGM15III,CKRZ08,BCZGH15,BCZ15,BMV14,BVW16,BGM15I,LiWeiZhang2017,BW13,Gallay2017,IMM17,WZZkolmo17,BGM15II,WZ18,LWZ18} and physics \cites{BajerEtAl01,DubrulleNazarenko94,RhinesYoung83,LatiniBernoff01}.

A rigorous mathematical framework for so-called relaxation enhancing
flows $\uu$ has been developed in \cite{CKRZ08}.
Roughly speaking, a velocity field $\uu$ is relaxation enhancing if by the diffusive time-scale  $O(\nu^{-1})$,
arbitrarily much energy is already dissipated. The main result of \cite{CKRZ08} characterizes relaxation
enhancing flows  in terms of the spectral properties of the operator $\uu\cdot \nabla$.
Precisely, $\uu$ is relaxation enhancing if and only if the operator $\uu\cdot \nabla$ has no nontrivial eigenfunctions in $\H^1$. 
In particular, weakly mixing flows (i.e., those with only continuous spectrum) fall in this class. 
The proof of this result is based on the so-called RAGE theorem \cite{RS79-3} and, at this level of generality, contains no quantitative information
on the appearance of a faster time-scale induced by $\uu$.  This work is a quantitative revisitation of the arguments in \cite{CKRZ08}, avoiding 
the use of the RAGE theorem by requiring a decay rate for \eqref{eq:decayH1}.

A sufficient condition for $\uu$ to be relaxation enhancing  is that the corresponding passive scalar $f^\nu$ 
obeys an estimate of the type
\begin{align}
\|f^\nu(t)\|_{L^2}\leq \varrho(\nu^{q} t)\|f^{in}\|_{L^2}, \qquad \forall t > \frac{1}{\nu^{q}},
\end{align}
for some $q\in (0,1)$ depending on $\uu$ 
and some monotonically decreasing function $\varrho:[0,\infty)\to [0,\infty)$ vanishing at infinity. In this
case, it is apparent the $\uu$ induces a faster time-scale  $O(\nu^{-q})$. In general, the dependence
of $q$ on $\uu$ is very hard to detect.
For example, Kelvin showed in \cite{Kelvin87} that $x$-dependent modes of the linearized Couette  flow $\uu=(y,0)$ on $\T \times \R$ 
decay on a time-scale  $O(\nu^{-1/3})$. In \cite{BW13}, a similar result was proven for a passive scalar advected by 
the Kolmogorov flow $\uu=(\sin y, 0)$, with a time-scale   $O(\nu^{-1/2})$.  The case of general shear $\uu=(u(y),0)$ with 
a finite number of critical points was treated in \cite{BCZ15}: the enhanced dissipation time-scale was proved to be  
$O(\nu^{-(n_0+1)/(n_0+3)})$, where  $n_0\in \N$ 
denotes the maximal order of vanishing of $u'$ at the critical points. Recently, the case of the two-dimensional Navier-Stokes
equations linearized around the Kolmogorov flow was analyzed in \cites{LX17,WZZkolmo17,IMM17}, obtaining a time-scale
of $O(\nu^{-1/2})$, while \cite{GNRS18} deals with certain monotone shears on $\T \times \R$, which induce  a time-scale
$O(\nu^{-1/3})$. In the radial setting, we refer to \cites{Deng2013,LiWeiZhang2017,Gallay2017} for the most recent results on 
the 2D Navier-Stokes equations linearized around the Oseen vortex.

In all the results mentioned above, the connection between mixing (in the sense of \eqref{eq:decayH1}) and enhanced dissipation
is not made explicit. In particular, the knowledge of mixing decay rates is never used in the proofs. However, the energy balance
for \eqref{eq:viscouspass}, namely
\begin{align}
\ddt \|f^{\nu}\|^2_{L^2}+2\nu \| f^\nu\|^2_{\H^1}=0,
\end{align}
implies that an anomalous rate of growth $\| f^\nu(t)\|_{\H^1}$ corresponds to a faster decay for $\|f^{\nu}(t)\|_{L^2}$. Heuristically speaking,
at the inviscid level, the decay of the $\H^{-1}$ norm \eqref{eq:decayH1} implies growth of $\| f(t)\|_{\H^1}$ due to $L^2$ conservation. If the viscous and inviscid dynamics
are close, then the growth of $\| f^\nu(t)\|_{\H^1}$ follows. The main objective of this paper is to make this heuristics precise and quantitative,
by investigating the following problem.

\subsection{Fundamental problem}\label{sub:funprob}
Assume that solutions to \eqref{eq:inviscidpass} satisfy the mixing estimate
\begin{align}\label{eq:mixing}
\| f(t)\|_{\H^{-1}} \leq \varrho(t)\| f^{in}\|_{\H^1}, \qquad \forall t\geq0,\ \forall f^{in}\in \H^1,
\end{align}
for some monotonically decreasing function $\varrho:[0,\infty)\to [0,\infty)$ vanishing at infinity. Prove
that $\uu$ is relaxation enhancing and identify an enhanced-dissipation time-scale faster than the diffusive one
(e.g., $O(\nu^{-q})$ for some $q\in(0,1)$, or $O(|\ln\nu|^{q})$ for some $q>0$).

\subsection{The main results}
The main result of this paper are Theorems \ref{thm:abspolymix} and \ref{thm:absexpmix} below, which are stated in a much more general
setting than the one described in this introduction. Referring to the question raised in Section \ref{sub:funprob} above, they can be 
phrased (in a slightly informal but effective way) as follows.  Here, we assume that the autonomous velocity field satisfies $\uu\in W^{1,\infty}$,
although time-dependence is also allowed.

\medskip

\noindent \textbf{Polynomial mixing} [Theorem \ref{thm:abspolymix}].
Assume that in \eqref{eq:mixing} we have $\varrho(t)\sim t^{-p}$, for some $p>0$. Then there exists a constant $c_0>0$ such that
\begin{align}
\|f^\nu(t)\|_{L^2}\leq \e^{-c_0\nu^{q} t} \|f^{in}\|_{L^2}, \qquad \forall t > \frac{1}{\nu^{q}}, \qquad \text{with}\quad q=\frac{2}{2+p}.
\end{align}
In particular, $\uu$ is relaxation enhancing with time-scale $O(\nu^{-q})$.

\medskip

\noindent \textbf{Exponential mixing} [Theorem \ref{thm:absexpmix}].
Assume that in \eqref{eq:mixing} we have $\varrho(t)\sim \e^{-t^p}$, for some $p>0$. Then there exists a constant $c_0>0$ such that
\begin{align}
\|f^\nu(t)\|_{L^2}\leq \e^{-c_0|\ln\nu|^{-q} t} \|f^{in}\|_{L^2}, \qquad \forall t > |\ln\nu|^{q}, \qquad \text{with}\quad q=\frac{2}{p}.
\end{align}
In particular,  $\uu$ is relaxation enhancing with time-scale $O(|\ln\nu|^{q})$.

\begin{remark}
From the mathematical viewpoint, there is nothing special about the use of the $\H^{-1}$ norm in the estimate \eqref{eq:mixing}.
Indeed, decay of any negative Sobolev estimate implies weak convergence to 0 of the solution $f(t)$. As expected, all our 
results hold in the case an $\H^s\mapsto \H^{-s}$ mixing estimate, with a modification of the enhanced dissipation time-scale 
(cf. Corollary \ref{cor:mixHs}).
\end{remark}

As far as concrete examples are concerned, we then deduce the following new results.

\medskip

\noindent \textbf{Contact Anosov flows} [Corollary \ref{cor:anos}]. 
All contact Anosov flows  on a smooth $2d+1$ dimensional connected compact Riemannian manifold 
are relaxation enhancing with time-scale $O(|\ln\nu|^{2})$.

\begin{remark}
For this result, we heavily exploit the exponential mixing estimate implied by the results of \cite{Liv04}. We also note that the geodesic flow
on any negatively curved space is an example of a contact Anosov flow.
\end{remark}

\begin{remark}
To the best of our knowledge, this is the first example of a flow that induces an enhanced dissipation time-scale that is
faster than $O(\nu^{-1/3})$.
\end{remark}

\medskip

\noindent \textbf{Shear flows with fractional diffusion} [Proposition \ref{prop:enhancedshear}].
Assume that a passive scalar $f^\nu$ is advected by a shear flow $\uu=(u(y),0)$,
where $u$ has a finite number of critical points, and undergoes diffusion given by the fractional laplacian $(-\Delta)^{\gamma/2}$, for
some $\gamma\in(0,2)$. Then $\uu$ is relaxation enhancing (relative to $(-\Delta)^{\gamma/2}$) at a time-scale which depends
on $\gamma$ and on the flatness of the critical points of $u$.

\begin{remark}
This is an extension of the results of \cite{BCZ15} to the case of fractional diffusion. However, the methods here are completely different.
In particular, the hypocoercivity scheme adopted in \cite{BCZ15} seems to be difficult to apply here, due to the nonlocal nature of the diffusion
and the consequent complicated nature of the commutators with the advection term.
\end{remark}

\noindent \textbf{Spiral flows} [Proposition \ref{sub:spiralmix} and Theorem \ref{thm:enhancespiral}].
For $\alpha\geq1$, consider the a passive scalar advected by $\uu(r,\theta)=r^{1+\alpha}(-\sin\theta, \cos\theta)$ in 
the unit disk, where $(r,\theta)$ are polar coordinates. Then $\uu$ is mixing with a rate only depending on $\alpha$, and therefore
relaxation enhancing at a time-scale $O(\nu^{-q_\alpha})$, with $q_\alpha\in(0,1)$, depending explicitly on $\alpha$. Note that by using
the special structure of this flow, the exponent $q_\alpha$ is in fact smaller (hence better) compared to the one given directly by Theorem \ref{thm:abspolymix}.

\begin{remark}
The case $\alpha=1$, from the point of view of mixing rates, has been analyzed in detail in \cite{CLS17}, using different methods. 
\end{remark}

\subsection{Structure of the article}
\ Section \ref{sec:abstract}  sets up the general scheme that allows us to treat a variety of problems of physical interest,
 listed in Section \ref{sub:examples}. We  
 prove the main results on polynomial and exponential mixing in Sections \ref{sub:poly} and \ref{sub:expo}, respectively.
Section \ref{sec:passivescalars} is dedicated to two applications: in Section \ref{sub:Anosov} we prove that all contact
Anosov flows have logarithmic enhanced dissipation time-scale, while in Section \ref{sub:fracto} we analyze the case of shear flows with fractional
diffusion. Finally, Section \ref{sec:spirale} deals with the radial setting of spiral flow, both from the inviscid mixing (cf. Section \ref{sub:spiralmix}) and
the enhanced dissipation (cf. Section \ref{sub:spiralenhanced}) viewpoints.

\section{The abstract result}\label{sec:abstract}
In this section, we rephrase problems \eqref{eq:inviscidpass} and \eqref{eq:viscouspass} in an abstract way.
We only make use of two fundamental properties: the antisymmetric structure of the advection term with respect
to a scalar product, and the dissipative nature of the Laplace operator.

\subsection{The functional setting}\label{sec:functional}
Let $H$ be a separable (real or complex) Hilbert space, endowed with norm
$\|\cdot\|_H$ and scalar (or Hermitian) product $\l\cdot,\cdot\r$, and let $A$ be a strictly positive self-adjoint unbounded linear operator
\begin{align}
A:\mathcal{D}(A)\to H
\end{align}
such that the domain $\mathcal{D}(A)$ is compactly embedded in $H$.
In this way, $A$ possesses a
strictly positive sequence of eigenvalues $\{\lambda_j\}_{j \in \N}$ such that
\begin{align}
\begin{cases}
0<\lambda_1\leq \lambda_2 \leq \ldots,\\
\lambda_j\to \infty \quad\text{for} \quad j \to \infty,
\end{cases}
\end{align}
and associated eigenvectors $\{e_j\}_{j\in\N}$ which form an orthonormal basis for $H$.
Any element $\phi \in H$ can be therefore written as 
\begin{align}
\phi=\sum_{j\geq 1} \phi_j e_j, \qquad \phi_j=\l \phi,e_j\r.
\end{align}
For any $s\in \R$, we can then define the scale of Hilbert spaces $H^s$, 
with norm 
\begin{align}
\|\phi\|^2_{H^s}=\sum_{j\geq 1} \lambda_j^{s}|\phi_j|^2.
\end{align}
Indicating by $P_{\leq R}$  the projection onto the span of the first elements of this basis corresponding to 
eigenvalues $|\lambda|\leq R$,
we deduce the Poincar\'e-like inequalities
\begin{align}\label{eq:lowfreq0}
\| P_{\leq R}\phi\|^2_{H}\leq R^{s}\|\phi\|^2_{H^{-s}}, \qquad s\geq0,
\end{align}
and
\begin{align}\label{eq:highfreq0}
R^{s}\| (I-P_{\leq R})\phi\|^2_{H}\leq \|\phi\|^2_{H^s}, \qquad s\geq0.
\end{align} 
Let $B$ be  an unbounded antisymmetric operator on $H$, such that
there exists a constant $c_B >0$ with the property that
\begin{align}\label{eq:Bdomain}
\| B\phi\|_H\leq c_B\|\phi\|_{H^s}, \qquad \forall \phi\in H^s
\end{align} 
for some $s> 0$, and
\begin{align}\label{eq:Bassumption}
|\Re\l B\phi, A\phi \r|\leq c_B\|\phi\|_{H^1}^2, \qquad \forall \phi\in H^1.
\end{align} 
As it will be clear from the examples in Section \ref{sub:examples}, in the case of passive scalar
these assumption are very much related to a divergence-free assumption on the flow and its regularity.

For $\nu\in (0,1)$ and $t>0$, we study the decay properties of solutions of the \textit{viscous flow}
\begin{align}\label{eq:viscousabs}
\begin{cases}
\de_t f^\nu+ Bf^\nu+\nu A f^\nu=0,\\
f^\nu(0)=f^{in},
\end{cases}
\end{align}
leaning on the mixing properties of the \textit{inviscid flow} 
\begin{align}\label{eq:inviscidabs}
\begin{cases}
\de_tf+Bf=0,\\
 f(0)=f^{in},
\end{cases}
\end{align}
where $f(t)=\e^{Bt}f^{in}$ is generated by the unitary evolution $\{\e^{Bt}\}_{t\in\R}$.
We first recall a few basic facts about the above linear problems.

\subsection{Basic properties of solutions}\label{sub:basicprop}
It is a classical result that for any $f^{in}\in H^1$,  problems \eqref{eq:viscousabs} and \eqref{eq:inviscidabs} admit unique
global in time solutions. Precisely,
\begin{align}
f^\nu\in L^2_{loc}(0,\infty; H^2)\cap C([0,\infty);H^1),
\end{align}
and
\begin{align}
f\in C([0,\infty);H^1).
\end{align}
If $f^{in}\in H$, then \eqref{eq:viscousabs} still has a unique global solution, with regularity
\begin{align}
f^\nu\in L^2_{loc}(0,\infty; H^1)\cap C([0,\infty);H).
\end{align}
In light of the antisymmetry of $B$, we have for the inviscid flow \eqref{eq:inviscidabs} the \textit{energy conservation} 
\begin{align}\label{eq:consener}
\|f(t)\|_{H}=\|f^{in}\|_{H}, \qquad \forall t\geq 0.
\end{align}
In a similar fashion, for the viscous flow \eqref{eq:viscousabs} we have the  \emph{energy equation} 
\begin{align}\label{eq:visc_ode}
\ddt \|f^{\nu}\|^2_{H}+2\nu \| f^\nu\|^2_{H^1}=0.
\end{align}
By multiplying the viscous flow \eqref{eq:viscousabs} by $A f$  we obtain
\begin{align}\label{eq:visc_ode2}
\ddt \|f^{\nu}\|^2_{H^1}+2\nu \| f^\nu\|^2_{H^2}=-2\Re\l B f^\nu, A f^\nu \r.
\end{align}
In view of the definition of $c_B$ \eqref{eq:Bassumption}, it is not hard to check that
\begin{align}\label{eq:visc_ode3}
\ddt \|f^{\nu}\|^2_{H^1}+2\nu \| f^\nu\|^2_{H^2}\leq 2c_B\|f^{\nu}\|_{H^1}^2.
\end{align}
Integrating \eqref{eq:visc_ode3} in time and re-arranging we obtain a bound on the integral of the $H^2$ norm as
\begin{align}\label{eq:intest2}
2\nu \int_0^t\| f^\nu(s)\|^2_{H^2}\dd s\leq 2c_B \int_0^t\|f^\nu(s)\|_{H^1}^2\dd s+\|f^{in}\|^2_{H^1}.
\end{align}
Lastly, by taking the difference between viscous flow \eqref{eq:viscousabs} and inviscid flow \eqref{eq:inviscidabs} and using the antisymmetry
of $B$ once more, 
we find that
\begin{align}
\ddt \| f^\nu-f\|^2_{H}=-2\nu \Re\l A f^\nu, f^{\nu}-f\r_{H}
\end{align}
From this and energy conservation \eqref{eq:consener}, we can deduce 
\begin{align}
\ddt \| f^\nu-f\|^2_{H}\leq2\nu \|f^\nu\|_{H^2} \|f\|_{H}.
\end{align}
Hence, for any $\tau_0\geq 0$, we have
\begin{align}\label{eq:diff21}
\| f^\nu(t+\tau_0)-f(t+\tau_0)\|^2_{H}&\leq\| f^\nu(\tau_0)-f(\tau_0)\|^2_{H}+ 2\nu \| f(\tau_0)\|_{H} \int_{\tau_0}^{\tau_0+t}\|f^\nu(s)\|_{H^2}\dd s\notag\\
&\leq\| f^\nu(\tau_0)-f(\tau_0)\|^2_{H}+ \sqrt{2\nu t}\left(2\nu\int_{\tau_0}^{\tau_0+t}\|f^\nu(s)\|^2_{H^2}\dd s\right)^{1/2}\| f(\tau_0)\|_{H}.
\end{align}
Combining it with the bound on the integral of the $H^2$ norm \eqref{eq:intest2}, we get an estimate on the proximity of solutions to the viscous and inviscid flow
\begin{align}\label{eq:close1}
 \| f^\nu(t)-f(t)\|^2_{H}
&\leq\| f^\nu(\tau_0)-f(\tau_0)\|^2_{H}\notag\\
&\quad+ \sqrt{ t}\left(4c_B \nu\int_{\tau_0}^{\tau_0+t}\|f^\nu(s)\|_{H^1}^2\dd s+2\nu\|f^{\nu}(\tau_0)\|^2_{H^1}\right)^{1/2}\| f(\tau_0)\|_{H}.
\end{align}

\subsection{Non-autonomous problems}
An important generalization of the above setting is achieved when considering a family $\{B(t)\}_{t\geq 0}$ 
of unbounded antisymmetric operators on $H$. All the results of this article hold in this case,
provided that it is possible to deduce analogous properties to those derived in the above Section \ref{sub:basicprop} for
the solutions of the non-autonomous viscous problem
\begin{align}\label{eq:viscousabsNON}
\begin{cases}
\de_t f^\nu+ B(t)f^\nu+\nu A f^\nu=0,\\
f^\nu(0)=f^{in},
\end{cases}
\end{align}
and its inviscid version
\begin{align}\label{eq:inviscidabsNON}
\begin{cases}
\de_tf+B(t)f=0,\\
 f(\tau_0)=f^{\tau_0}.
\end{cases}
\end{align}
where $f^{\tau_0}$ is the initial condition assigned at an arbitrary initial time $\tau_0\geq0$.
A particular case, yet very important, is when $B(t)=\uu(t)\cdot\nabla$, for a Lipschitz continuous divergence-free 
velocity field $\uu$ that is uniformly  bounded in time. This is discussed in detail in Section \ref{rem:concrete} below, where we shall see
that, in an appropriate function space setting, properties \eqref{eq:Bdomain}-\eqref{eq:Bassumption} hold uniformly in time. Hence,
\eqref{eq:consener} and \eqref{eq:visc_ode} hold in the same fashion, as well as \eqref{eq:visc_ode3}.

\subsection{Polynomial mixing}\label{sub:poly}
The goal is to provide a link between decay (mixing) properties of the inviscid flow \eqref{eq:inviscidabsNON}, which solely 
depend on the structure of the operator $B$, and the creation of time-scales for the viscous flow \eqref{eq:viscousabsNON}
which are faster than the purely diffusive one, proportional to $1/\nu$. 
Our first result is dealing with the case in which the inviscid problem undergoes mixing at polynomial rates. 

\begin{theorem}[Polynomial mixing]\label{thm:abspolymix}
Under the assumption \eqref{eq:Bassumption}, assume that solutions to \eqref{eq:inviscidabsNON} satisfy the mixing estimate
\begin{align}\label{eq:inviscid_damping}
\| f(t)\|_{H^{-1}} \leq \frac{a}{(t-\tau_0)^p}\| f^{\tau_0}\|_{H^1}, \qquad \forall t>\tau_0,\ \forall f^{\tau_0}\in H^1,
\end{align}
for an arbitrary initial time $\tau_0\geq 0$,
for some $p\in(0,\infty)$ and some $a>0$.
Then, for every $f^{in}\in H$ there holds the estimate 
\begin{align}
    \|f^{\nu}(t)\|_H\le \e^{-c_0\nu^q t}\|f^{in}\|_H,\qquad \forall t>\frac{1}{\nu^{q}},
\end{align}
with 
\begin{align}
q=\frac{2}{2+p},\qquad c_0=\frac{1}{128}\min\left\{\frac{1}{2(1+c_B)},\frac{1}{a 4^p}\right\},
\end{align}
where $c_B$ is given by \eqref{eq:Bassumption}.
\end{theorem}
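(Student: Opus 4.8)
The plan is to prove a single-window decay estimate and then iterate it. Concretely, I will produce a window length $\tau$ and a (small) decay fraction $\delta\in(0,1)$, both explicit functions of $\nu$, such that
$$\|f^\nu(\tau_0+\tau)\|_H^2\le(1-\delta)\|f^\nu(\tau_0)\|_H^2\qquad\text{for every }\tau_0\ge0.$$
Chaining this over $\lfloor t/\tau\rfloor$ consecutive windows gives $\|f^\nu(t)\|_H^2\le(1-\delta)^{\lfloor t/\tau\rfloor}\|f^{in}\|_H^2\le\tfrac{1}{1-\delta}\e^{-(\delta/\tau)t}\|f^{in}\|_H^2$, and the harmless prefactor is absorbed into the exponential once $t$ exceeds the time-scale, which is why the clean bound is stated for $t>\nu^{-q}$. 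The theorem then reduces to checking that the optimal choice realizes $\delta/\tau\sim\nu^q$ with $q=2/(2+p)$. By the parabolic smoothing of \eqref{eq:viscousabs} I may assume $f^\nu(\tau_0)\in H^1$, but every constant must be independent of $\|f^\nu(\tau_0)\|_{H^1}$, since the hypothesis only supplies $f^{in}\in H$.

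The window estimate is argued by contradiction. Suppose $\|f^\nu(\tau_0+\tau)\|_H^2>(1-\delta)\|f^\nu(\tau_0)\|_H^2$. Since $t\mapsto\|f^\nu(t)\|_H$ is non-increasing by \eqref{eq:visc_ode}, this lower bound propagates across the window, and integrating \eqref{eq:visc_ode} forces the dissipation integral to be small:
$$\nu\int_{\tau_0}^{\tau_0+\tau}\|f^\nu(s)\|_{H^1}^2\,\dd s\le\tfrac{\delta}{2}\|f^\nu(\tau_0)\|_H^2.$$
This is the quantitative form of the heuristic that, if the energy does not decay, then $f^\nu$ must stay essentially at low frequencies. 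A mean-value (pigeonhole) argument applied to this integral lets me select a base time $\tau_1$ in the first part of the window and an evaluation time $t^*$ in the last part, separated by a fixed fraction of the window so that $t^*-\tau_1\gtrsim\tau$, at which the pointwise norms are controlled, $\|f^\nu(\tau_1)\|_{H^1}^2+\|f^\nu(t^*)\|_{H^1}^2\lesssim\frac{\delta}{\nu\tau}\|f^\nu(\tau_0)\|_H^2$. This replaces the missing a priori control of $\|f^\nu\|_{H^1}$ and produces admissible $H^1$-data to feed into the mixing hypothesis.

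I then split the energy at $t^*$ through the cut-off $P_{\le R}$. The high-frequency part is controlled by the dissipation via \eqref{eq:highfreq0}, $\|(I-P_{\le R})f^\nu(t^*)\|_H^2\le R^{-1}\|f^\nu(t^*)\|_{H^1}^2$, which is small by the good-time bound. For the low-frequency part I compare $f^\nu$ to the inviscid evolution $g$ of \eqref{eq:inviscidabsNON} issued from $g(\tau_1)=f^\nu(\tau_1)$ and write
$$\|P_{\le R}f^\nu(t^*)\|_H\le\|P_{\le R}g(t^*)\|_H+\|P_{\le R}(f^\nu(t^*)-g(t^*))\|_H.$$
The first term is handled by the mixing estimate \eqref{eq:inviscid_damping} together with \eqref{eq:lowfreq0}, giving $\|P_{\le R}g(t^*)\|_H^2\le R\|g(t^*)\|_{H^{-1}}^2\lesssim R\,a^2(t^*-\tau_1)^{-2p}\|f^\nu(\tau_1)\|_{H^1}^2$. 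The decisive point is to bound the second term by the full difference $\|f^\nu(t^*)-g(t^*)\|_H$, discarding the projection rather than passing through $H^{-1}$, so that the proximity estimate \eqref{eq:close1}—which, using the good-time bounds and the small dissipation integral, contributes at size $\sim\sqrt{c_B\,\delta\,\tau}$ because of the $c_B$-term inherited from \eqref{eq:Bassumption}—enters \emph{without} the amplifying factor $R$.

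Collecting the three contributions, the contradiction hypothesis takes the schematic form
$$1-\delta<\frac{c\,\delta}{\nu\tau R}+c\,a^{2}4^{2p}\,\frac{\delta R}{\nu\tau^{2p+1}}+c\sqrt{c_B\,\delta\,\tau},$$
and it remains to choose $R$, $\tau$, $\delta$ to violate it. The third (proximity) term forces $\delta\tau\lesssim1/c_B$, hence a rate $\delta/\tau\sim1/(c_B\tau^2)$; the first and second terms are mutually compatible only for $R\sim\nu^{-(1-q)}$, which is exactly the frequency at which the diffusive rate $\nu R\sim\nu^q$ matches the target rate, and this compatibility is where the threshold $1/(a4^p)$ appears. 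Optimizing, the balance is realized at $\tau\sim\nu^{-q/2}$ and $R\sim\nu^{-(1-q)}$ with $q=2/(2+p)$, which contradicts the hypothesis and yields a per-window rate $\delta/\tau\sim\nu^q$, the admissible constant $c_0$ being governed by $\min\{1/(1+c_B),\,1/(a4^p)\}$ as in the statement. The hard part is precisely this optimization: one must project the viscous–inviscid difference before estimating it, so that the $\sqrt\tau$ growth of \eqref{eq:close1} is not further multiplied by the cut-off $R$; otherwise no admissible placement of $R$ closes the argument and one recovers only the trivial diffusive scale $O(\nu^{-1})$.
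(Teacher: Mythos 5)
Your proposal is correct and follows essentially the same route as the paper's proof: a contradiction argument on a time window, Chebyshev selection of times with pointwise $H^1$ control, comparison with the inviscid flow via \eqref{eq:close1}, the mixing estimate combined with the frequency cutoffs \eqref{eq:lowfreq0}--\eqref{eq:highfreq0} (dropping the projection on the viscous--inviscid difference exactly as the paper does), optimization in $R$ at the balance $q=2/(2+p)$, and iteration of the per-window decay. The only cosmetic differences are that you close the contradiction pointwise at a second good time $t^*$, whereas the paper integrates a lower bound on $\|f^\nu\|_{H^1}^2$ over the late quarter of the sub-window against the dissipation budget, and that you take a decay fraction $\delta\sim\nu^{q/2}$ on a window of length $\nu^{-q/2}$ while the paper takes a constant $\delta$ on a window of length $\nu^{-q}$; both bookkeepings yield the same rate $c_0\nu^q$ with the same constant structure $\min\{1/(1+c_B),\,1/(a4^p)\}$.
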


\begin{remark}
If the problem under consideration were autonomous, namely if $B(t)\equiv B$ for all $t\geq 0$, estimate \eqref{eq:inviscid_damping}
could be assumed for $\tau_0=0$ only. 
\end{remark}

We preliminary note that from the inviscid damping estimate \eqref{eq:inviscid_damping}
combined with the estimate for low frequencies \eqref{eq:lowfreq0}, for every $R\geq 0$ we obtain
\begin{align}\label{eq:inviscid_damping2}
\| P_{\leq R}f(t)\|_{H}\leq \frac{a R}{(t-\tau_0)^{p}}\|f^{\tau_0}\|_{H^1}, \qquad \forall t> \tau_0.
\end{align}
This estimate is clearly indicative of the decaying behavior of the low frequencies for the inviscid problem,
and therefore precisely describes the transfer of energy towards higher frequencies due to mixing. The main
point in the proof is to combine this effect with dissipation.

\begin{proof}[Proof of Theorem \ref{thm:abspolymix}]
 We first show that for all $\nu<1$ and every $\tau_\star\geq0$, we have the inequality 
\begin{align}\label{decay delta1}
\nu\int_{\tau_\star}^{\tau_\star+\nu^{-q}}\|f^\nu(t)\|_{H^1}^2\dd t\geq \delta\|f^{\nu}(\tau_\star)\|^2_{H},
\end{align}
where
\begin{align}\label{choice of delta1}
    \delta=\frac{1}{64}\min\left\{\frac{1}{2(1+c_B)},\frac{1}{a 4^{p}}\right\}.
\end{align}
Without loss of generality, we can assume by linearity that $\|f^{\nu}(\tau_\star)\|_{H}=1$. 
For the sake of simplicity, we will show \eqref{decay delta1} for $\tau_\star=0$. It will be clear from the proof that this choice is irrelevant.
Towards a contradiction, we assume that
\begin{align}\label{contradictiohyp1}
  \nu\int_0^{\nu^{-q}}\|f^\nu(t)\|_{H^1}^2\dd t<\delta.  
\end{align}
We first claim that there exists a $\tau_1\in [0,\nu^{-q}]$ so that 
\begin{align}\label{eq:deftau11}
\nu\int_{\tau_1}^{\tau_1+\nu^{-q/2}}\|f^\nu(s)\|_{H^1}^2\dd s< 2\delta \nu^{q/2}.
\end{align}
We consider $t_i=(i-1)\nu^{-q/2}$ and  $\lfloor\nu^{-q/2}\rfloor$ the largest integer smaller than $\nu^{-q/2}$. Then
\begin{align}
    \lfloor\nu^{-q/2}\rfloor \min_{i} \nu\int_{t_i}^{t_{i+1}}\|f^\nu(s)\|_{H^1}^2\dd s\le \sum_{i=1}^{\lfloor\nu^{-q/2}\rfloor} \nu\int_{t_i}^{t_{i+1}}\|f^\nu(s)\|_{H^1}^2\dd s\le \nu \int_0^{\nu^{-q}}\|f^\nu(s)\|_{H^1}^2\dd s<\delta.
\end{align}
Using that $\nu<1$ we have that $\lfloor\nu^{-q/2}\rfloor\ge \nu^{-q/2}/2$, and the existence of $\tau_1$ follows. Moreover, by Chebyshev's inequality and \eqref{eq:deftau11} we can find $\tau_0\in [\tau_1,\tau_1+\nu^{-q/2}/2]$ such that 
\begin{align}\label{reversepoicareattau01}
\nu\|f^\nu(\tau_0)\|_{H^1}^2< 4\delta\nu^q.    
\end{align}
Moreover, 
\begin{align}\label{boundtau01}
    \nu\int_{\tau_0}^{\tau_0+\nu^{-q/2}/2}\|f^\nu(s)\|_{H^1}^2\dd s< 2\delta \nu^{q/2},
\end{align}
by \eqref{eq:deftau11}.
Now we take $f^{\tau_0}=f^\nu(\tau_0)$ as initial datum for the inviscid problem \eqref{eq:inviscidabsNON} with initial time $\tau_0$ and denote the solution by $f(t+\tau_0)$ with $t\geq 0$. Using that $\|f^\nu(\tau_0)\|_H\leq 1$, the estimate on the proximity of the two flows \eqref{eq:close1}, the properties of $\tau_0$ \eqref{reversepoicareattau01} and \eqref{boundtau01}, we have
\begin{align}\label{eq:close2_1}
\| f^\nu(\tau_0+t)-f(\tau_0+t)\|^2_{H} 
&\leq \sqrt{ t}\left(4c_B \nu\int_{\tau_0}^{\tau_0+t}\|f^\nu(s)\|_{H^1}^2\dd s+2\nu\|f^\nu(\tau_0)\|^2_{H^1}\right)^{1/2}\notag\\
&\leq \sqrt{\frac{\nu^{-q/2}}{2}}\left(8c_B\delta\nu^{q/2} +8\delta\nu^q\right)^{1/2}\leq\frac{1}{4},
\end{align}
for all $t\in [0,\frac12\nu^{-q/2}]$, since $\delta\leq\frac{1}{128(1+c_B)}$ and $\nu<1$. 
By energy conservation \eqref{eq:consener} for the inviscid problem, the energy dissipation for the viscous evolution \eqref{eq:visc_ode}, the contradiction hypothesis \eqref{contradictiohyp1} and the choice of $\delta$ \eqref{choice of delta1}, we have 
\begin{align}\label{initial energy}
\|f(t+\tau_0)\|^2_H=\|f^\nu(\tau_0)\|^2_{H}=\|f^{in}\|^2_{H}-2\nu\int_0^{\tau_0}\|f^{\nu}(s)\|_{H^1}^2\dd s\geq 1-2\delta\geq\frac{3}{4},    
\end{align}
for all $t\in [0,\frac{1}{2}\nu^{-q/2}]$. 
Now, using the mixing estimate  \eqref{eq:inviscid_damping2} together with \eqref{reversepoicareattau01}, we obtain  for any $R\geq 1$ and any $t\in [\frac{1}{4}\nu^{-q/2},\frac{1}{2}\nu^{-q/2}]$ that
\begin{align}\label{eq:mixlow}
\| P_{\leq R}f(\tau_0+t)\|^2_{H}
\leq\frac{a^2 R}{t^{2p}}\|f^\nu(\tau_0)\|^2_{H^1} \leq  4^{2p+1}a^2 \delta\nu^{q(p+1)-1}R.
\end{align}
Using the energy conservation for the inviscid evolution \eqref{eq:consener} and the initial energy bound \eqref{initial energy} we have
\begin{align}
\| (I-P_{\leq R})f(\tau_0+t)\|^2_{H}=\| f(\tau_0+t)\|^2_{H}-\| P_{\leq R}f(\tau_0+t)\|^2_{H}\geq \frac{3}{4}-\| P_{\leq R}f(\tau_0+t)\|^2_{H}.
\end{align}
We deduce that
\begin{align}\label{eq:highfreq1_11}
\| (I-P_{\leq R})f(\tau_0+t)\|^2_{H}\geq\frac{3}{4}-  4^{2p+1}a^2 \delta\nu^{q(p+1)-1}R, \qquad \forall t\in \left[\frac{1}{4}\nu^{-q/2},\frac{1}{2}\nu^{-q/2}\right] .
\end{align}
Now, appealing to \eqref{eq:close2_1}, we find
\begin{align}
\frac{3}{4}-  4^{2p+1}a^2 R\delta\nu^{q(p+1)-1}
&\leq \| (I-P_{\leq R})f(\tau_0+t)\|^2_{H}\notag\\
&\leq 2\| (I-P_{\leq R})(f^\nu(\tau_0+t)-f(\tau_0+t))\|^2_{H}+2\| (I-P_{\leq R})f^\nu(\tau_0+t)\|^2_{H}\notag\\
&\leq 2\| f^\nu(\tau_0+t)-f(\tau_0+t)\|^2_{H}+2\| (I-P_{\leq R})f^\nu(\tau_0+t)\|^2_{H}\notag\\
&\leq \frac12+2\| (I-P_{\leq R})f^\nu(\tau_0+t)\|^2_{H},
\end{align}
so we deduce that 
\begin{align}\label{eq:highfreq2_11}
\| (I-P_{\leq R})f^\nu(\tau_0+t)\|^2_{H}\geq \frac{1}{8}\left(1 -4^{2p+2}a^2 \delta\nu^{q(p+1)-1}R\right), \qquad \forall t\in \left[\frac{1}{4}\nu^{-q/2},\frac{1}{2}\nu^{-q/2} \right].
\end{align}
In turn, from the estimate for the high frequency \eqref{eq:highfreq0} we have that
\begin{align}\label{eq:highfreq3_11}
\| f^\nu(\tau_0+t)\|^2_{H^1}&\geq \frac{R}{8}\left(1 -4^{2p+2}a^2 \delta\nu^{q(p+1)-1}R\right), \qquad \forall t\in \left[\frac{1}{4}\nu^{-q/2},\frac{1}{2}\nu^{-q/2} \right].
\end{align}
The left-hand side is independent of $R$, while the right-hand side is a quadratic function of $R$, so we can maximize the right-hand side with respect to $R$. To this end, we pick
\begin{align}
R= \frac{1}{2}\frac{1}{4^{2p+2}a^2 \delta\nu^{q(p+1)-1}},
\end{align}
and obtain
\begin{align}
\| f^\nu(\tau_0+t)\|^2_{H^1}\geq\frac{1}{512}\frac{1}{4^{2p}a^2 \delta\nu^{q(p+1)-1}}, \qquad \forall t\in \left[\frac{1}{4}\nu^{-q/2},\frac{1}{2}\nu^{-q/2}\right]\label{eq:finalestforH1}.
\end{align}
Integrating over $\left(\frac{1}{4}\nu^{-q/2},\frac{1}{2}\nu^{-q/2}\right)$ and using the bound \eqref{boundtau01}, we obtain
\begin{align}
2\delta\nu^{q/2}\ge \nu \int_{\frac{1}{4}\nu^{-q/2}}^{\frac{1}{2}\nu^{-q/2}}\| f^\nu(\tau_0+t)\|^2_{H^1}\dd t >\frac{1}{2048}\frac{\nu^{-q/2}}{4^{2p}a^2 \delta\nu^{q(p+1)-2}}.
\end{align}
By re-arranging and recalling that $q(p+2)-2=0$, we get  that 
\begin{align}
\delta^2>\frac{1}{4096}\frac{1}{4^{2p} a^2}
\end{align}
which contradicts our choice of $\delta$ \eqref{choice of delta1} and proves the desired estimate \eqref{decay delta1}.

To show the exponential decay we iterate \eqref{decay delta1}, use that the energy is not increasing in time and the time-integrated
version of \eqref{eq:visc_ode} to obtain  
\begin{align}\label{aux final}
    \|f^{\nu}(t)\|_H^2\le \|f^{\nu}(\lfloor\nu^q t\rfloor\nu^{-q})\|_H^2\le (1-2\delta)^{\lfloor\nu^q t\rfloor}\|f^{in}\|_H^2\le \e^{-2\delta \lfloor\nu^q t\rfloor}\|f^{in}\|_H^2,
\end{align}
where we have denoted by $\lfloor\nu^q t\rfloor$ the largest integer smaller than $\nu^q t$ and we have used the convexity of the exponential. Finally, if $\nu^{-q}< t$, then 
$$
\lfloor\nu^q t\rfloor>\frac{t\nu^q}{2},
$$ 
which combined with \eqref{aux final} yields the desired estimate. The proof is over.
\end{proof}

In fact, we can modify the mixing estimate \eqref{eq:inviscid_damping} to a general $H^{-s}$ norm in a simple way.

\begin{corollary}[General mixing estimates]\label{cor:mixHs}
Under the assumption \eqref{eq:Bassumption}, assume that solutions to \eqref{eq:inviscidabsNON} satisfy the mixing estimate
\begin{align}\label{eq:inviscid_dampings}
\| f(t)\|_{H^{-s}} \leq \frac{a}{(t-\tau_0)^p}\| f^{\tau_0}\|_{H^s}, \qquad \forall t\geq\tau_0,\ \forall f^{\tau_0}\in H^1,
\end{align}
for an arbitrary initial time $\tau_0\geq 0$, for some $p\in(0,\infty)$ and some $a,s>0$.
Then, for every $f^{in}\in H$ there holds the estimate 
\begin{align}
    \|f^{\nu}(t)\|_H\le \e^{-c_s\nu^{q_s} t}\|f^{in}\|_H,\qquad \forall t>\frac{1}{\nu^{q_s}},
\end{align}
with 
\begin{align}
q_s=\frac{\max\{1,s\}+s}{\max\{1,s\}+s+p},
\end{align}
and
\begin{align}
c_s=\frac12\min\left\{\frac{1}{128(1+c_B)},\left(\frac{s}{16(s+1)}\right)^\frac{s}{1+s}\left(\frac{\lambda_1^{1-s}}{4^{2p+2}(s+1)a^2 }\right)^\frac{1}{1+s}\right\},
\end{align}
where $c_B$ is given by \eqref{eq:Bassumption}.
\end{corollary}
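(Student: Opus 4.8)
The plan is to run the proof of Theorem~\ref{thm:abspolymix} essentially verbatim, since the only hypothesis that changes is the mixing estimate \eqref{eq:inviscid_dampings}, now measured in $H^{-s}$ instead of $H^{-1}$. Every ingredient insensitive to $s$ transfers unchanged: the energy identities \eqref{eq:visc_ode}--\eqref{eq:visc_ode3}, the proximity estimate \eqref{eq:close1}, the averaging/Chebyshev selection of a good time $\tau_0$ satisfying \eqref{reversepoicareattau01} and \eqref{boundtau01}, and the final iteration yielding exponential decay. In particular the closeness bound \eqref{eq:close2_1} involves only $H^1$ quantities and imposes the same smallness $\delta\le\frac{1}{128(1+c_B)}$, which is responsible for the first term in the minimum defining $c_s$. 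Only two steps require genuine modification: the low-frequency mixing bound and the optimization in the cut-off $R$.

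First, combining \eqref{eq:inviscid_dampings} with the low-frequency inequality \eqref{eq:lowfreq0} at exponent $s$ gives
\[
\|P_{\le R}f(\tau_0+t)\|_H^2\le \frac{a^2R^s}{t^{2p}}\,\|f^\nu(\tau_0)\|_{H^s}^2,
\]
so the quantity to be controlled at the good time is the \emph{stronger} norm $\|f^\nu(\tau_0)\|_{H^s}$, and this is exactly where $\max\{1,s\}$ enters. When $s\le1$ the norm $H^s$ is weaker than $H^1$, and the elementary bound $\lambda_j^s\le\lambda_1^{s-1}\lambda_j$ (valid because $s-1\le0$ and $\lambda_j\ge\lambda_1$) gives $\|f^\nu(\tau_0)\|_{H^s}^2\le\lambda_1^{s-1}\|f^\nu(\tau_0)\|_{H^1}^2$; the $H^1$-control from \eqref{reversepoicareattau01} makes the right-hand side of order $\nu^{q_s-1}$, which is the source of the factor $\lambda_1^{1-s}$ in $c_s$. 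When $s>1$ no such comparison is available, so instead I would split the datum as $f^\nu(\tau_0)=P_{\le M}f^\nu(\tau_0)+(I-P_{\le M})f^\nu(\tau_0)$: by linearity and energy conservation the inviscid evolution of the low part is governed by mixing through $\|P_{\le M}f^\nu(\tau_0)\|_{H^s}^2\le M^s$, while the high part is estimated directly in $H$ by $\|(I-P_{\le M})f^\nu(\tau_0)\|_H^2\le M^{-1}\|f^\nu(\tau_0)\|_{H^1}^2$ via \eqref{eq:highfreq0}. Optimizing over the splitting level $M$ balances the two contributions and effectively raises the regularity level from $1$ to $s$, which is the origin of $\max\{1,s\}=s$ in this regime.

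With the low-frequency bound in place, the remaining steps reproduce \eqref{eq:highfreq1_11}--\eqref{eq:finalestforH1}: inviscid energy conservation keeps $\|f(\tau_0+t)\|_H^2\ge\frac34$, the proximity estimate \eqref{eq:close2_1} transfers the high-frequency lower bound from $f$ to $f^\nu$, and \eqref{eq:highfreq0} at exponent $1$ (the dissipation lives in $H^1$) turns this into $\|f^\nu(\tau_0+t)\|_{H^1}^2\ge\frac{R}{8}\big(1-c\,R^{\sigma}\nu^{\gamma}\big)$. The only difference from the theorem is that the cut-off now appears with a power $\sigma$ ($\sigma=s$ when $s\le1$, and $\sigma=\frac{s}{s+1}$ after the $M$-optimization when $s>1$) distinct from the linear power coming from the $H^1$ recovery; maximizing this concave function of $R$ is still explicit. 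Integrating over $[\frac14\nu^{-q_s/2},\frac12\nu^{-q_s/2}]$ and comparing with \eqref{boundtau01} forces the powers of $\nu$ to cancel, and a short computation shows this occurs precisely for $q_s=\frac{\max\{1,s\}+s}{\max\{1,s\}+s+p}$. Feeding the optimal $R$ back into the contradiction yields a self-consistency relation of the form $\delta^{(1+s)/s}\gtrsim\text{const}$, whose solution produces the exponents $\frac{s}{1+s}$ and $\frac{1}{1+s}$ appearing in $c_s$; the exponential decay then follows by the same iteration as in Theorem~\ref{thm:abspolymix}.

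The genuine obstacle I expect is the case $s>1$: unlike $\|f^\nu(\tau_0)\|_{H^1}$, the strong norm $\|f^\nu(\tau_0)\|_{H^s}$ is not dominated by the dissipated energy, so the single time-selection of Theorem~\ref{thm:abspolymix} no longer suffices and must be supplemented by the frequency truncation and the auxiliary optimization in $M$. The delicate bookkeeping is to keep the two optimizations (in $M$ and in $R$) compatible and to check that the resulting $M$ and $R$ are admissible---in particular bounded below by $\lambda_1$---for $\nu$ small. For $s\le1$ the modification is entirely routine and $c_s$ follows from the same self-consistent choice of $\delta$ as in the theorem.
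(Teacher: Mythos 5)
Your proposal is correct, and for $s\in(0,1]$ it coincides with the paper's proof: the same Poincar\'e-type comparison $\lambda_j^s\le\lambda_1^{s-1}\lambda_j$ converts the $H^s$ norm at the good time $\tau_0$ into the dissipation-controlled $H^1$ norm, the low-frequency bound acquires the factor $R^s\lambda_1^{s-1}$, and the modified optimization in $R$ produces the self-consistency relation $\delta^{(s+1)/s}\gtrsim \text{const}$ and the balance $q(1+s+p)=1+s$, exactly as in the paper.

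For $s>1$, however, you take a genuinely different route. The paper disposes of this case in three lines: since the inviscid solution operator is an isometry on $H$ (energy conservation) and maps $H^s\to H^{-s}$ with norm $a(t-\tau_0)^{-p}$, operator interpolation (the cited Tartar lemma) shows it maps $H^1\to H^{-1}$ with norm $a^{1/s}(t-\tau_0)^{-p/s}$, and then Theorem \ref{thm:abspolymix} applies as a black box with $p$ replaced by $p/s$, giving $q=\frac{2}{2+p/s}=\frac{2s}{2s+p}$. You instead rerun the contradiction argument with an auxiliary frequency splitting of the datum at level $M$, estimating the low part through the $H^{-s}$ mixing bound (using $\|P_{\le M}f^\nu(\tau_0)\|_{H^s}^2\le M^s$) and the high part through energy conservation and \eqref{eq:highfreq0}, then optimizing in $M$ and in $R$. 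This is correct: carrying out the exponent count, the optimal $M$ yields a low-frequency bound of order $R^{s/(s+1)}\nu^{[qp+s(q-1)]/(s+1)}$, the optimal $R$ is of order $\delta^{-1}\nu^{-[qp+s(q-1)]/s}$, and the cancellation of powers of $\nu$ against \eqref{boundtau01} forces $2-2q-qp/s=0$, i.e.\ $q_s=\frac{2s}{2s+p}$ as claimed (note that in this regime the self-consistency relation is $\delta^2\gtrsim\text{const}$ rather than $\delta^{(1+s)/s}$; the latter is correct only for $s\le1$, which is indeed where the stated formula for $c_s$ originates). The trade-off: the paper's interpolation argument is far shorter and reuses the theorem verbatim, while your splitting argument is elementary and self-contained, avoids interpolation theory entirely, and makes transparent why the effective regularity level is $\max\{1,s\}$; your concern about admissibility of $M$ and $R$ is harmless since both tend to infinity as $\nu\to0$.
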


\begin{proof}
Let us first treat the case $s\in(0,1]$. In this case, the choice of $\delta$ in \eqref{choice of delta1} has to be modified to
\begin{align}\label{choice of delta1s}
    \delta_s= \min\left\{\frac{1}{128(1+c_B)},\left(\frac{s}{16(s+1)}\right)^\frac{s}{1+s}\left(\frac{\lambda_1^{1-s}}{4^{2p+2}(s+1)a^2 \delta_s\nu^{q(p+1)-1}}\right)^\frac{1}{1+s}\right\}.
\end{align}
Using \eqref{eq:lowfreq0}, \eqref{eq:inviscid_dampings} and the Poincar\'e inequality (thanks to $s\leq 1$),  we would replace \eqref{eq:mixlow} with
\begin{align}\label{eq:mixlows}
\| P_{\leq R}f(\tau_0+t)\|^2_{H}\leq  \frac{a^2 R^s}{t^{2p}}\|f^\nu(\tau_0)\|^2_{H^s}
\leq\frac{a^2 R^s}{t^{2p}\lambda_1^{1-s}}\|f^\nu(\tau_0)\|^2_{H^1} \leq  \frac{4^{2p+1}a^2 \delta_s\nu^{q(p+1)-1}}{\lambda_1^{1-s}}R^s.
\end{align}
In turn, from the analogous of \eqref{eq:highfreq3_11}, namely
\begin{align}\label{eq:highfreq3_1s}
\| f^\nu(\tau_0+t)\|^2_{H^1}&\geq \frac{R}{8}\left(1 -\frac{4^{2p+2}a^2 \delta_s\nu^{q(p+1)-1}}{\lambda_1^{1-s}}R^s\right), \qquad \forall t\in \left[\frac{1}{4}\nu^{-q/2},\frac{1}{2}\nu^{-q/2} \right].
\end{align}
we deduce by optimizing in $R$ that
\begin{align}
\| f^\nu(\tau_0+t)\|^2_{H^1}\geq\frac{s}{8(s+1)}\left(\frac{\lambda_1^{1-s}}{4^{2p+2}(s+1)a^2 \delta_s\nu^{q(p+1)-1}}\right)^{1/s}, \qquad \forall t\in \left[\frac{1}{4}\nu^{-q/2},\frac{1}{2}\nu^{-q/2}\right]\label{eq:finalestforH1s}.
\end{align}
The proof then follows word for word, with the condition that
\begin{align}
q(1+s+p)-1-s=0,
\end{align}
as claimed.

The proof of the case $s>1$ is even simpler. Indeed, since $\|f(t)\|_{L^2}=\|f^{\tau_0}\|_{L^2}$, we can use standard interpolation theory (see \cite{Tartar}*{Lemma 22.3}) and obtain from \eqref{eq:inviscid_dampings} that
\begin{align}
\| f(t)\|_{H^{-1}} \leq \frac{a^{1/s}}{(t-\tau_0)^{p/s}}\| f^{\tau_0}\|_{H^1}, \qquad \forall t\geq\tau_0,\ \forall f^{\tau_0}\in H^1.
\end{align}
Thus the result in this case follows by a direct application of Theorem \ref{thm:abspolymix}. The proof is over.
\end{proof}

\begin{remark}
Generically speaking, the parameters $s$ and $p$ in \eqref{eq:inviscid_dampings} are not independent of each other.
In particular, it is natural to expect that $p\to0$ as $s\to 0$. 
\end{remark}

\subsection{Exponential mixing}\label{sub:expo}
Regarding exponential mixing, we prove a result which parallels that of Theorem \ref{thm:abspolymix}, obtaining
logarithmic rates of decay. 

\begin{theorem}[Exponential mixing]\label{thm:absexpmix}
Under the assumption \eqref{eq:Bassumption}, assume that solutions to \eqref{eq:inviscidabsNON} satisfy the mixing estimate 
\begin{align}\label{eq:inviscid_dampingexp}
\| f(t)\|_{H^{-1}} \leq a_1 \e^{-a_2 (t-\tau_0)^p}\| f^{\tau_0}\|_{H^1}, \qquad \forall t\geq\tau_0,\ \forall f^{\tau_0}\in H^1,
\end{align}
for an arbitrary initial time $\tau_0\geq 0$,
for some $a_1,a_2, p>0$. Then, for every $f^{in}\in H$ and every
\begin{align}\label{eq:ucsadf}
0<\nu < \min\left\{\e^{-\frac{4^p}{2 a_2}},\e^{-1},\e^{-a_1^{p/2}}\right\},
\end{align}
there holds the estimate 
\begin{align}
    \|f^{\nu}(t)\|_H\le \e^{-c_0|\ln \nu|^{-2/p} t}\|f^{in}\|_H,\qquad \forall t>|\ln \nu|^{2/p}, 
\end{align}
with 
\begin{align}
    c_0=\frac{1}{128}\min\left\{\frac{a_2^{2/p}}{32(1+c_B)},1\right\}
\end{align}
and $c_B$ given by \eqref{eq:Bassumption}.
\end{theorem}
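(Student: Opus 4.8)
The plan is to follow the blueprint of Theorem \ref{thm:abspolymix} almost verbatim, the only structural change being that the fundamental time window is now \emph{logarithmic} rather than a power of $\nu$. Concretely, I would first prove a one-step decay estimate of the form $\nu\int_{\tau_\star}^{\tau_\star+T}\|f^\nu(t)\|_{H^1}^2\,\dd t\geq\delta\,\|f^\nu(\tau_\star)\|_H^2$, valid for every $\tau_\star\geq0$, with a window length $T\sim|\ln\nu|^{2/p}$ (up to a constant depending on $a_2$) and a fixed $\delta>0$ chosen small relative to $c_B$ from \eqref{eq:Bassumption}. Granting this, the exponential bound follows exactly as in the final lines of the proof of Theorem \ref{thm:abspolymix}: the energy equation \eqref{eq:visc_ode} turns the one-step estimate into $\|f^\nu(\tau_\star+T)\|_H^2\leq(1-2\delta)\|f^\nu(\tau_\star)\|_H^2$, and iterating over $\lfloor t/T\rfloor$ windows together with $1-x\leq\e^{-x}$ yields decay at rate $\delta/T\sim|\ln\nu|^{-2/p}$, which is the claimed $c_0|\ln\nu|^{-2/p}$ (the $a_2$-dependence of the window constant being exactly what produces the factor $a_2^{2/p}$ in $c_0$).

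The one-step estimate would be proved by the same contradiction scheme. Normalising $\|f^\nu(\tau_\star)\|_H=1$ and taking $\tau_\star=0$, I assume $\nu\int_0^T\|f^\nu\|_{H^1}^2\,\dd t<\delta$ and derive a contradiction. First a pigeonhole over subintervals of length $\sqrt T$ produces a subinterval $[\tau_1,\tau_1+\sqrt T]$ on which $\nu\int\|f^\nu\|_{H^1}^2\lesssim\delta/\sqrt T$, and then Chebyshev's inequality gives a time $\tau_0\in[\tau_1,\tau_1+\tfrac12\sqrt T]$ with the reverse-Poincar\'e bound $\nu\|f^\nu(\tau_0)\|_{H^1}^2\lesssim\delta/T$. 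Using $f(\tau_0+\cdot)$ as the inviscid solution with datum $f^\nu(\tau_0)$, the proximity estimate \eqref{eq:close1} shows $\|f^\nu(\tau_0+t)-f(\tau_0+t)\|_H^2\leq\tfrac14$ for $t\in[0,\tfrac12\sqrt T]$; here the crucial cancellation is that the factor $\sqrt t\leq\sqrt T$ appearing in \eqref{eq:close1} is exactly compensated by the pigeonhole gain $\delta/\sqrt T$. The energy equation \eqref{eq:visc_ode} and conservation \eqref{eq:consener} then force $\|f(\tau_0+t)\|_H^2\geq\tfrac34$.

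The heart of the argument, and the only place where exponential (as opposed to polynomial) mixing enters, is the estimate of the low frequencies. Combining \eqref{eq:inviscid_dampingexp} with the Poincar\'e-type bound \eqref{eq:lowfreq0} gives $\|P_{\leq R}f(\tau_0+t)\|_H^2\leq R\,a_1^2\e^{-2a_2 t^p}\|f^\nu(\tau_0)\|_{H^1}^2$. Evaluating at the subwindow scale $t\in[\tfrac14\sqrt T,\tfrac12\sqrt T]$, so that $t^p\geq 4^{-p}T^{p/2}$, the exponential factor becomes a genuine \emph{power} of $\nu$, namely $\e^{-2a_2 t^p}\leq\nu^{2a_2 4^{-p}T^{p/2}/|\ln\nu|}$. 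Choosing the window constant so that $2a_2 4^{-p}T^{p/2}\geq 2|\ln\nu|$ — equivalently $T^{p/2}\sim|\ln\nu|/a_2$, which pins down $T\sim|\ln\nu|^{2/p}$ and hence the exponent $2/p$ — this power is at least $\nu^2$, which precisely overrides the $\nu^{-1}$ hidden in $\|f^\nu(\tau_0)\|_{H^1}^2\lesssim\delta/(T\nu)$. From here the remainder is identical to the polynomial case: the high-frequency mass $\|(I-P_{\leq R})f^\nu(\tau_0+t)\|_H^2$ is bounded below, \eqref{eq:highfreq0} converts it into a lower bound for $\|f^\nu(\tau_0+t)\|_{H^1}^2$ of the form $\tfrac R8(1-cR)$ with $c$ proportional to the low-frequency constant just obtained, optimising in $R$ yields $\|f^\nu(\tau_0+t)\|_{H^1}^2\gtrsim 1/c$, and integrating over $[\tfrac14\sqrt T,\tfrac12\sqrt T]$ contradicts the subwindow budget $\nu\int\|f^\nu\|_{H^1}^2\lesssim\delta/\sqrt T$. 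The smallness hypotheses \eqref{eq:ucsadf} are exactly what guarantee that $|\ln\nu|$ is large enough for all these inequalities to hold simultaneously.

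The main obstacle I anticipate is the calibration in the previous paragraph: the window length $T$ must be chosen large enough that the mixing factor $\e^{-2a_2 t^p}$ at $t\sim\sqrt T$ beats the diffusive loss $\nu^{-1}$, yet the subwindow on which we run the proximity estimate must remain $\sim\sqrt T$ so that \eqref{eq:close1} stays bounded — these two requirements together are what force the scale $T\sim|\ln\nu|^{2/p}$ and the $a_2$-dependence of $c_0$. A pleasant simplification compared with Theorem \ref{thm:abspolymix} is that here the final contradiction is not borderline: whereas in the polynomial case the exponents balance exactly (through $q(p+2)=2$) so that the contradiction is quantitative in $\delta$, the exponential mixing makes the derived lower bound for $\|f^\nu\|_{H^1}^2$ exceed the available budget by a factor that diverges as $\nu\to0$, so the contradiction becomes automatic once $\nu$ satisfies \eqref{eq:ucsadf}.
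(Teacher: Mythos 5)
Your proposal is correct and follows essentially the same argument as the paper: the same contradiction scheme (pigeonhole over subwindows of length $\sim|\ln\nu|^{1/p}$, Chebyshev selection of $\tau_0$, the proximity estimate \eqref{eq:close1}, the low-frequency mixing bound, optimization in $R$, integration over the subwindow, and iteration of the one-step estimate), with the key calibration — choosing the window so that $\e^{-2a_2 t^p}$ at subwindow times becomes a power of $\nu$ that overrides the $\nu^{-1}$ hidden in $\|f^\nu(\tau_0)\|_{H^1}^2$ — identical to the paper's choice of the subwindow constant $c\sim a_2^{-1/p}$. The only difference is cosmetic bookkeeping (you place the $a_2$-dependence in the window length $T$ rather than in the paper's $\delta$ and subwindow constant), and your closing observations, that the cancellation of powers of $T$ in \eqref{eq:close1} is exact and that the final contradiction is non-borderline with a margin diverging as $\nu\to0$, both match the paper's computation.
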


\begin{proof}[Proof of Theorem \ref{thm:absexpmix}]
As in the proof of Theorem \ref{thm:abspolymix}, the main point is to show that
that for all $f^{in}\in L^2$ and $\nu$ complying with \eqref{eq:ucsadf}, we have the inequality 
\begin{align}\label{decay delta}
\nu\int_0^{|\ln \nu|^{2/p}}\|f^\nu(t)\|_{H^1}^2\dd t \geq \delta\|f^{in}\|^2_{H},
\end{align}
where
\begin{align}\label{choice of delta}
    \delta=\frac{1}{64}\min\left\{\frac{a_2^{2/p}}{32(1+c_B)},\frac{1}{a_1}\right\}.
\end{align}
Without loss of generality, we can assume by linearity that $\|f^{in}\|_{H}=1$. Towards a contradiction, we assume that
\begin{align}\label{contradictiohyp}
  \nu\int_0^{|\ln \nu|^{2/p}}\|f^\nu(t)\|_{H^1}^2 \dd t<\delta.  
\end{align}
We show that there exists a $\tau_1\in [0,|\ln \nu|^{2/p}]$ so that 
\begin{align}\label{eq:deftau1}
\nu\int_{\tau_1}^{\tau_1+c|\ln \nu|^{1/p}}\|f^\nu(s)\|_{H^1}^2\dd s< \frac{2c}{|\ln \nu|^{1/p}}\delta ,
\end{align}
where $c=4(2a_2)^{-1/p}$.
We consider $t_i=(i-1)c|\ln \nu|^{1/p}$ and  $\lfloor c^{-1}|\ln \nu|^{1/p}\rfloor$ the largest integer smaller than $c^{-1}|\ln \nu|^{1/p}$. Then
\begin{align}
    \lfloor c^{-1}|\ln \nu|^{1/p}\rfloor \min_{i} \nu\int_{t_i}^{t_{i+1}}\|f^\nu(s)\|_{H^1}^2\dd s&\le \sum_{i=1}^{\lfloor c^{-1}|\ln \nu|^{1/p}\rfloor} \nu\int_{t_i}^{t_{i+1}}\|f^\nu(s)\|_{H^1}^2\dd s\\
    &\le \nu \int_0^{|\ln \nu|^{2/p}}\|f^\nu(s)\|_{H^1}^2\dd s<\delta.
\end{align}
Using that $\nu<\e^{-c^p}$ we have that $\lfloor c^{-1}|\ln \nu|^{1/p}\rfloor\ge c^{-1} |\ln \nu|^{1/p}/2$, and the existence of $\tau_1$ follows. Moreover, by Chebyshev's inequality and \eqref{eq:deftau1} we can find $\tau_0\in [\tau_1,\tau_1+\frac{c}{2}|\ln \nu|^{1/p}]$ such that 
\begin{align}\label{reversepoicareattau0}
\nu\|f^\nu(\tau_0)\|_{H^1}^2< 4c\delta|\ln \nu|^{-2/p},  
\end{align}
and 
\begin{align}\label{boundtau0}
    \nu\int_{\tau_0}^{\tau_0+\frac{1}{2}|\ln \nu|^{1/p}}\|f^\nu(s)\|_{H^1}^2\dd s< 2c\delta |\ln \nu|^{-1/p},
\end{align}
by \eqref{eq:deftau1}.

Now we take $f^\nu(\tau_0)$ as initial datum for the inviscid problem \eqref{eq:inviscidabsNON} with initial time $\tau_0$ and denote the solution by $f(t+\tau_0)$ with $t\geq 0$. Using the estimate on the proximity of the two flows \eqref{eq:close1}, the properties of $\tau_0$ \eqref{reversepoicareattau0} and \eqref{boundtau0}, we have
\begin{align}\label{eq:close2_1exp}
\| f^\nu(\tau_0+t)-f(\tau_0+t)\|^2_{H} 
&\leq \sqrt{ t}\left(4c_B \nu\int_{\tau_0}^{\tau_0+t}\|f^\nu(s)\|_{H^1}^2\dd s+2\nu\|f^\nu(\tau_0)\|^2_{H^1}\right)^{1/2}\notag\\
&\leq \sqrt{\frac{c}{2}|\ln \nu|^{1/p}}\left(8c_Bc\delta|\ln \nu|^{-1/p} +8c\delta|\ln \nu|^{-2/p}\right)^{1/2}<\frac{1}{4},
\end{align}
for all $t\in [0,\frac{c}{2}|\ln \nu|^{1/p}]$, since $\delta<\frac{1}{128 c^2(1+c_B)}$ and $\nu<\e^{-1}$. 
By energy conservation \eqref{eq:consener} for the inviscid problem, the energy dissipation for the viscous evolution \eqref{eq:visc_ode}, the contradiction hypothesis \eqref{contradictiohyp} and the choice of $\delta$ \eqref{choice of delta}, we have 
\begin{align}\label{initial energyexp}
\|f(t+\tau_0)\|^2_H=\|f^\nu(\tau_0)\|^2_{H}=\|f^{in}\|^2_{H}-2\nu\int_0^{\tau_0}\|f^{\nu}(s)\|_{H^1}^2\dd s\geq 1-2\delta\geq\frac{3}{4},    
\end{align}
for all $t\in [0,\frac{c}{2}|\ln \nu|^{1/p}]$. 
Now, using the mixing estimate  \eqref{eq:inviscid_dampingexp} together with \eqref{reversepoicareattau0}, we obtain  for any $R\geq 1$ and any $t\in [\frac{c}{4}|\ln \nu|^{1/p},\frac{c}{2}|\ln \nu|^{1/p}]$ that
\begin{align}
\| P_{\leq R}f(\tau_0+t)\|^2_{H}\leq  a_1^2\e^{-2a_2t^p} R\|f^\nu(\tau_0)\|^2_{H^1} \leq 4  a_1^2\e^{-\frac{2}{4^p}a_2c^p|\ln \nu|} \delta \nu^{-1}| \ln \nu|^{-2/p}R=4a_1^2\nu \delta|\ln \nu|^{-2/p}R,
\end{align}
where we have used the definition of $c=4a_2^{-1/p}$.
Using the energy conservation for the inviscid evolution \eqref{eq:consener} and the initial energy bound \eqref{initial energyexp} we have
\begin{align}
\| (I-P_{\leq R})f(\tau_0+t)\|^2_{H}=\| f(\tau_0+t)\|^2_{H}-\| P_{\leq R}f(\tau_0+t)\|^2_{H}\geq \frac{3}{4}-\| P_{\leq R}f(\tau_0+t)\|^2_{H}.
\end{align}
We deduce that
\begin{align}\label{eq:highfreq1_1}
\| (I-P_{\leq R})f(\tau_0+t)\|^2_{H}\geq\frac{3}{4}-  4  a_1^2\nu \delta|\ln \nu|^{-2/p}R, \qquad \forall t\in \left[\frac{c}{4}|\ln\nu|^{1/p},\frac{c}{2}|\ln\nu|^{1/p}\right].
\end{align}
Now, appealing to \eqref{eq:close2_1exp}, we find
\begin{align}
\frac{3}{4}-  4  a_1^2\nu R\delta|\ln \nu|^{-2/p}
&\leq \| (I-P_{\leq R})f(\tau_0+t)\|^2_{H}\notag\\
&\leq 2\| (I-P_{\leq R})(f^\nu(\tau_0+t)-f(\tau_0+t))\|^2_{H}+2\| (I-P_{\leq R})f^\nu(\tau_0+t)\|^2_{H}\notag\\
&\leq 2\| f^\nu(\tau_0+t)-f(\tau_0+t)\|^2_{H}+2\| (I-P_{\leq R})f^\nu(\tau_0+t)\|^2_{H}\notag\\
&\leq \frac12+2\| (I-P_{\leq R})f^\nu(\tau_0+t)\|^2_{H},
\end{align}
so we deduce that 
\begin{align}\label{eq:highfreq2_1}
\| (I-P_{\leq R})f^\nu(\tau_0+t)\|^2_{H}\geq \frac{1}{8}\left(1 -16  a_1^2\nu \delta|\ln \nu|^{-2/p}R\right), \qquad \forall t\in \left[\frac{c}{4}|\ln\nu|^{1/p},\frac{c}{2}|\ln\nu|^{1/p} \right].
\end{align}
In turn, from the estimate for the high frequency \eqref{eq:highfreq0} we have that
\begin{align}\label{eq:highfreq3_1}
\| f^\nu(\tau_0+t)\|^2_{H^1}&\geq \frac{R}{8}\left(1 -16  a_1^2\nu \delta|\ln \nu|^{-2/p}R\right), \qquad \forall t\in \left[\frac{c}{4}|\ln\nu|^{1/p},\frac{c}{2}|\ln\nu|^{1/p} \right].
\end{align}
The left-hand side is independent of $R$, while the right-hand side is a quadratic function of $R$, so we can maximize the right-hand side with respect to $R$. To this end, we pick
\begin{align}
R= \frac{1}{2}\frac{1}{16  a_1^2\nu \delta|\ln \nu|^{-2/p}},
\end{align}
and obtain
\begin{align}
\| f^\nu(\tau_0+t)\|^2_{H^1}\geq\frac{1}{512}\frac{1}{a_1^2\nu\delta|\ln \nu|^{-2/p}}, \qquad \forall t\in \left[\frac{c}{4}|\ln\nu|^{1/p},\frac{c}{2}|\ln\nu|^{1/p}\right].
\end{align}
Multiplying by $\nu$, integrating over $\left[\frac{c}{4}|\ln\nu|^{1/p},\frac{c}{2}|\ln\nu|^{1/p}\right]$ and using the bound \eqref{boundtau0}, we obtain
\begin{align}
2c\delta|\ln\nu|^{-1/p}\ge \nu \int_{\frac{c}{4}|\ln\nu|^{1/p}}^{\frac{c}{2}|\ln\nu|^{1/p}}\| f^\nu(\tau_0+t)\|^2_{H^1}\dd t>\frac{1}{2048}\frac{c|\ln\nu|^{1/p}}{a_1^2\delta|\ln \nu|^{-2/p}}.
\end{align}
By re-arranging, we get  that 
\begin{align}
\delta^2>\frac{1}{4096}\frac{1}{a_1^2}|\ln\nu|^{4/p},
\end{align}
which, from the restriction on $\nu$ in \eqref{eq:ucsadf}, contradicts our choice of $\delta$ \eqref{choice of delta} and proves the desired estimate \eqref{decay delta}. The proof is over.
\end{proof}

\section{Concrete examples}\label{sub:examples}

In this section, we give a few examples that fall in the class of problems under consideration. Some of them are treated in detail 
in later sections.

\subsection{Passive scalars on Riemannian manifolds}\label{rem:concrete}
We consider $M$ a compact Riemannian manifold with a metric $g$ and volume form $\omega$. A concrete realization of the above abstract setting is achieved by taking
\begin{align}
H=\left\{\varphi\in L_{\omega}^2(M): \int_{M}\varphi(x)\dd \omega(x)=0\right\}, \qquad A\varphi=(-\Delta_{M})\varphi, \qquad B(t)\varphi=g(\uu(t), \nabla_M \varphi),
\end{align}
for a smooth and divergence-free velocity field $\u(t,x):[0,\infty)\times M\to TM$, such that 
$\u\in L^\infty_t W_x^{1,\infty}$. The Laplace-Beltrami operator $-\Delta_{M}$ satisfies all the assumptions above. 
Regarding the operators $B(t)$, the antisymmetry holds thanks to the divergence-free assumption, which is equivalent to the volume form being invariant under the flow induced by $\u(t)$. 

Next, we check the assumption \eqref{eq:Bassumption}. Integrating by parts we get
\begin{align}
\l g(\u(t), \nabla_M \varphi), -\Delta_M \varphi \r_{H}&=\int_{M}g(\u(t), \nabla_M \varphi)(-\Delta_M\varphi)\,\dd\omega(x)\notag\\
&=\int_{M}g(\nabla_M[g(\u(t), \nabla_M \varphi)],\nabla_M\varphi)\,\dd\omega(x).\label{eq:intbyparts}
\end{align}
By applying the product rule we get
\begin{align}\label{eq:productrule}
\nabla_M[g(\u(t), \nabla_M \varphi)]=\nabla_M\u(t)[\nabla_M \varphi] + D^2_M \varphi[\u(t)].
\end{align}
Using the symmetry of the Hessian, we obtain
\begin{align}\label{eq:symmetry}
g(D^2_M \varphi[\u(t)],\nabla_M\varphi)=D^2_M \varphi[\u(t),\nabla_M\varphi]=g(\u(t), D^2_M \varphi[\nabla_M\varphi]).
\end{align}
We notice that
\begin{align}\label{eq:perfectderivative}
\frac{1}{2}\nabla_M[g(\nabla_M \varphi,\nabla_M \varphi)]=D^2_M \varphi[\nabla_M \varphi],
\end{align}
and that the divergence free condition implies that
\begin{align}\label{eq:divfree}
\int_M g(\u(t), \nabla_M[g(\nabla_M \varphi,\nabla_M \varphi)])\,\dd\omega(x)=0.
\end{align}
Therefore, by combining \eqref{eq:intbyparts}, \eqref{eq:productrule}, \eqref{eq:symmetry}, \eqref{eq:perfectderivative}, \eqref{eq:divfree} and bounding the first derivatives of $\u(t)$ we have
\begin{align}
\l g(\u(t), \nabla_M \varphi), -\Delta_M \varphi \r_{H}\le   \|\u\|_{L_t^\infty W_x^{1,\infty}}\|\varphi \|^2_{H^1},
\end{align}
which is precisely \eqref{eq:Bassumption}.

\subsection{Fractional diffusion}\label{rem:concrete2}
Thanks to the generality of our setting, we can also handle the case of fractional diffusion on the periodic domain $\T^d$, namely
\begin{align}\label{eq:fractpass}
\de_t f^\nu +\u(t) \cdot \nabla f^\nu+\nu\Lambda^\gamma f^\nu=0,
\end{align}
where $\Lambda=\sqrt{-\Delta}$ is the Zygmund operator, $\gamma\in(0,2)$ is a fixed parameter 
measuring the strength of the diffusion, and $\u\in L^\infty_t W_x^{1,\infty}$ is divergence free. For the fractional laplacian, 
we will mainly use its representation as the singular integral 
\begin{align}\label{eq:fraclapc}
\Lambda^\gamma f^\nu(x)= c_\gamma \sum_{k \in \ZZ^d}  \int_{\T^d}   \frac{f^\nu(x) - f^\nu(x+y)}{|y- 2\pi k|^{d+\gamma}} \dd y= c_\gamma \,\mathrm{P.V.}\int_{\R^d} \frac{f^\nu(x)-f^\nu(x+y)}{|y|^{d+\gamma}}\dd y, 
\end{align} 
abusing notation and  denoting by $f^\nu$ also the periodic extension of $f^\nu$ to the whole space.

While the dissipative operator $A=\Lambda^{\gamma}$ satisfies the assumptions above when considered on appropriate mean-zero 
function spaces, the proof
of \eqref{eq:Bassumption} seems to be problematic in this case. Notice that \eqref{eq:Bassumption} is used
only in the proof of \eqref{eq:visc_ode3}, so we bypass this difficulty by proving \eqref{eq:visc_ode3} directly,
relying on a technique developed in \cites{CZ15,CZKV15,CV12} for the surface quasi-geostrophic equation.

In the present case, the abstract space $H^1$ corresponds to the homogenous Sobolev space $\H^{\gamma/2}$.
Consider  the finite difference
\begin{align*}
\delta_hf^\nu(t,x)=f^\nu(t,x+h)-f^\nu(t,x),
\end{align*}
which is periodic in both $x$ and $h$, where  $x,h \in \T^d$.  In turn, 
\begin{align}\label{eq:findiff0}
L (\delta_hf^\nu)=0,
\end{align}
where $L$ denotes the differential operator
$$
L=\de_t+\u\cdot \nabla_x+(\delta_h\u)\cdot \nabla_h+ \nu\Lambda^\gamma.
$$
From \eqref{eq:findiff0}, we use the formula (see \cite{CC04})
\begin{align*}
2\varphi(x) \Lambda^\gamma \varphi(x)=\Lambda^\gamma \big(\varphi(x)^2\big)+D_\gamma[\varphi](x), 
\end{align*}
valid for $\gamma\in (0,2)$ and $\varphi\in C^\infty(\T^d)$, and with
\begin{align}
\label{eq:D:gamma:def}
D_\gamma[\varphi](x)= c_\gamma \int_{\R^d} \frac{\big[\varphi(x)-\varphi(x+y)\big]^2}{|y|^{d+\gamma}}\dd y.
\end{align}
We then arrive at
\begin{align}\label{eq:findiff}
L (\delta_hf^\nu)^2+ \nu D_\gamma[\delta_hf^\nu]=0.
\end{align}
For an arbitrary $\alpha\in (0,1)$,
we study the evolution of the quantity $v(t,x;h)$ defined by
\begin{align}
v(t,x;h) =\frac{\delta_hf^\nu(t,x)}{|h|^{d/2+\gamma/2}}.
\end{align}
Notice that $v$ is very much related to the usual homogeneous fractional  Sobolev norms, in the sense that
\begin{align}
\|f^\nu(t)\|^2_{\H^{\gamma/2}}=\int_{\R^d}\int_{\R^d}\big[v(t,x;h)\big]^2\dd h\, \dd x= 
\int_{\R^d}\int_{\R^d}\frac{\big[f^\nu(t,x+h)-f^\nu(t,x)\big]^2}{|h|^{d+\gamma}}\dd h\, \dd x.
\end{align}
From \eqref{eq:findiff}, pointwise in $x,h$ and $t$, we deduce that
\begin{align}
L v^2+\nu\frac{ D_\gamma[\delta_hf^\nu] }{|h|^{d+\gamma}}
&=-(d+\gamma) \frac{h}{|h|^2}\cdot \delta_h\u \, v^2\leq (d+\gamma)\|\nabla\u\|_{L_{t,x}^\infty}\, v^2. \label{eq:ineq1}
\end{align}
We integrate the above inequality first in $h\in \T^d$ and then $x\in \T^d$. Using that 
\begin{align}
\frac12\int_{\R^d}\int_{\R^d} \frac{ D_\gamma[\delta_hf^\nu] }{|h|^{d+\gamma}}\dd h\,\dd x
=\int_{\R^d}\int_{\R^d} \frac{|\delta_h\Lambda^{\gamma/2}f^\nu|^2}{|h|^{d+\gamma}}\dd h\, \dd x
=\|f^\nu\|^2_{\H^{\gamma}},
\end{align}
and the divergence free assumption we arrive at
\begin{align}\label{eq:Hgammavisc}
\ddt\|f^\nu\|^2_{\H^{\gamma/2}}+ 2\nu\|f^\nu\|^2_{\H^{\gamma}}\leq 2(d+\gamma)\|\nabla\u\|_{L_{t,x}^\infty}\|f^\nu\|^2_{\H^{\gamma/2}}.
\end{align}
This is precisely \eqref{eq:visc_ode3}.

\subsection{Linearized 2D Navier-Stokes equations around the Kolmogorov flow}
In this section, we consider the two-dimensional forced Navier-Stokes equations on a periodic domain 
\begin{align}
\T^2_L=\left[-\frac{\pi}{L},\frac{\pi}{L}\right]\times [-\pi,\pi],
\end{align}
for $L\geq1$, in the usual vorticity formulation
\begin{align}\label{eq:NSEvort}
\de_t\omega^\nu +\vv^\nu\cdot \nabla \omega^\nu=\nu\Delta \omega^\nu -\nu\cos y.
\end{align}
Here, $\nu >0$, and, denoting $\nabla^\perp=(-\de_y,\de_x)$, we have
\begin{align}
\vv^\nu= \nabla^\perp (\Delta)^{-1} \omega^\nu=(-\de_y \Delta^{-1} \omega^\nu,
\de_x \Delta^{-1} \omega^\nu)
\end{align}
A stationary solution to \eqref{eq:NSEvort} is given by
\begin{align}
\uu_S(x,y)=(u(y), 0)=(\sin y, 0), \qquad \omega^\nu_S(x,y)=-u'(y)=-\cos y.
\end{align}
Linearizing around this solution, namely writing
\begin{align}
\omega^\nu=f^\nu+\omega_S,
\end{align}
and neglecting the nonlinear contribution, we arrive at
\begin{align}\label{eq:Kolmolinear} 
\begin{cases}
\de_tf^\nu +\sin(y) \de_x \left[I+\Delta^{-1}\right] f^\nu=\nu\Delta f^\nu, \quad &\text{in } (x,y)\in  \T^2_L, \ t\geq 0,\\
f^\nu(0)=f^{in}, \quad &\text{in } (x,y)\in  \T^2_L,
\end{cases}
\end{align}
In agreement with the functional setting of Section \ref{sec:functional}, it is again convenient to expand $f^\nu$ 
as
\begin{align}
f^\nu(t,x,y)=\sum_{k\in \ZZ} f^\nu_k(t,y) \e^{iLkx}, \qquad f^\nu_{k}(t,y)=\frac{L}{2\pi} \int_{\T_L} f^\nu(t,x,y) \e^{-iLkx}\dd x,
\end{align}
and obtain
\begin{align}\label{eq:Kolmolinearfourier}
\begin{cases}
\de_tf_k^\nu +ikL\sin(y)  \left[I+\Delta_k^{-1}\right] f_k^\nu=\nu\Delta_k f_k^\nu,  \quad &\text{in } y\in  \T, \ t\geq 0,\\
f_k^\nu(0)=f_k^{in}, \quad &\text{in } y\in  \T,
\end{cases}
\end{align}
where
\begin{align}
\Delta_k= -L^2k^2+\de_{yy}.
\end{align}
We assume that
\begin{align}\label{eq:kLrel}
|k|\geq \begin{cases}
1 \quad \text{when}\quad L>1,\\
2 \quad \text{when}\quad L=1.
\end{cases}
\end{align}
As far as the functional spaces are concerned, we consider the $L^2$ based space
\begin{align}
H=\left\{\phi:\T\to \mathbb C:\ \| \phi\|^2_H=\int_\T \left[|\phi(y)|^2-|\Delta_k^{-1/2}\phi(y)|^2\right]\dd y<\infty \right\},
\end{align}
with scalar product
\begin{align}
\l \phi_1,\phi_2\r=\int_\T  (I+\Delta_k^{-1})\phi_1(y)\overline{\phi_2(y)}\dd y.
\end{align}
As a consequence of  \eqref{eq:kLrel}, the norm in $H$
is equivalent to the usual $L^2$-norm, and the operator
\begin{align}\label{eq:Bsin}
B=ikL\sin(y)  \left[I+\Delta_k^{-1}\right]
\end{align}
is skew-adjoint. As mentioned earlier, questions related to enhanced dissipation of solutions to \eqref{eq:Kolmolinear} have been addressed in many
recent papers \cites{LX17,WZZkolmo17,IMM17,BW13}, in which a time-scale proportional to $\nu^{-1/2}$ is achieved. 
With our approach, using the polynomial inviscid damping estimate in \cite{WZZkolmo17} of order $t^{-1}$, 
we obtain from Theorem \ref{thm:abspolymix} an enhanced dissipation time-scale 
proportional to $\nu^{-2/3}$. In fact, exploiting further structure (similar to that in spiral flow in Section \ref{sec:spirale} below), the time-scale
improves to $\nu^{-3/5}$.

\subsection{Linear kinetic theory}\label{rem:concrete3}
Within this framework, we can also consider convergence to equilibrium of linear kinetic equations. We study $g^\nu:(0,\infty)\times\T^d\times\R^d\to \R$ a probability distribution evolving in time by the linear kinetic equation at positive temperature and friction force proportional to $\nu>0$
\begin{align}
\de_t g^\nu +v\cdot \nabla_x g^\nu=\nu(\Delta_v+\nabla_v\cdot(v g^\nu)).
\end{align}
Using the detailed balance condition, we do the change of variables given by
\begin{align}
f^\nu=\frac{g^\nu}{G}-1,
\end{align}
where 
\begin{align}
G(v)=\frac{1}{(2\pi)^{d/2}}\e^{-\frac{|v|^2}{2}}
\end{align}
is the the equilibrium of the system. Under this change of variables the decay of $f^\nu$ is equivalent to the convergence of $g^\nu$ to the Gibbs measure $G$.

Further decomposing $f^\nu$ into its Fourier modes on the $x$ variable $\{f_k^\nu\}_{k\in\ZZ^d}$, we obtain the decoupled system of equations
\begin{align}
\de_t f_k^\nu +iv\cdot k f_k^\nu=\nu(\Delta_v f_k^\nu - v \cdot\nabla_v f_k^\nu).
\end{align}
In this setting, we define $H$ to be a weighted $L^2$ space. In particular, its inner product is given by
\begin{align}
\langle\phi,\psi\rangle_H=\int_{\R^d} \phi\,\overline{\psi}\,G(v)\dd v.
\end{align}
The operator $A\phi=-\Delta_v \phi+v\cdot\nabla_v \phi$ is self-adjoint, positive definite and compact on this inner product. Moreover, we have by definition
\begin{align}\label{eq:H1kinetic}
\|\phi\|_{H^1}^2=\langle A\phi,\phi\rangle=\int_{\R^d}|\nabla_v \phi|^2G(v)\dd v,
\end{align}
where the last identity follows from integrating by parts. Further, the operator $B \phi=iv\cdot k \phi$ is anti-symmetric with respect to this inner product. 
Checking the assumption \eqref{eq:Bassumption}, we have
\begin{align}
\langle B\phi,A\phi \rangle =i\int_{\R^d}v\cdot k|\nabla_v \phi|^2G(v)\dd v+ik\cdot \int_{\R^d} \phi\,\overline{\nabla_v \phi}\,G(v)\dd v.
\end{align}
We notice that the real part of the first term vanishes, hence applying H\"older's inequality in the second term and using \eqref{eq:H1kinetic} we get
\begin{align}
\left|\Re\langle B\phi,A\phi \rangle_H\right|\le |k|\|\phi\|_H\|\phi\|_{H^1}.
\end{align}
This inequality is in fact a stronger form of \eqref{eq:Bassumption}, and has analogies with the case of shear and spiral flows
analyzed  in Section~\ref{sec:passivescalars}.

\section{Enhanced dissipation in passive scalars}\label{sec:passivescalars}
In this section, we focus on solutions to advection diffusion equations as those presented earlier in Sections \ref{rem:concrete}
and \ref{rem:concrete2}. 

\subsection{Contact Anosov flows}\label{sub:Anosov}
We discuss here contact Anosov flows, a class of exponential mixing dynamical systems that satisfies the hypothesis of Theorem~\ref{thm:absexpmix}.
For this section, we take advantage of the results of Liverani found in \cite{Liv04}. 

Let $M$ be a $C^4$,  $2d+1$ dimensional connected compact Riemannian manifold with a metric $g$ and volume form $\omega$. Let $\Phi_t:M\to M$ be 
a $C^4$ flow, that is 
\begin{itemize}
	\item $\Phi_0$ is the identity on $M$,
	\item $\Phi_{t+\tau}=\Phi_t\circ \Phi_\tau$, for all $t,\tau \in \R$.
\end{itemize}
We assume that $\Phi_t$ is a contact Anosov flow, namely that the following holds:

\medskip

\noindent $\diamond$ \emph{Anosov property.}
At each point $x\in M$, there exists a splitting of the tangent
space $T_xM = E^s(x) \oplus E^c(x)\oplus E^u(x)$. The splitting is invariant with respect
to $\Phi_t$, $E^c$ is one dimensional and coincides with the flow direction; in addition
there exists $A, \mu > 0$ such that
\begin{align*}
\| \dd \Phi_t v\|_g\leq A \e^{-\mu t}\|v\|_g, \qquad \forall v\in E^s,\, t\geq 0,\\
\| \dd \Phi_t v\|_g\geq A \e^{\mu t}\|v\|_g, \qquad \forall v\in E^u,\, t\leq 0.
\end{align*}
Here,  $\|v\|_g=g(v,v)^{1/2}$ is the natural norm on the tangent space $TM$. 

\medskip

\noindent $\diamond$ \emph{Contact property.}
There exists a one form $\alpha$ such that $\alpha \wedge (\dd \alpha)^d$ is nowhere
zero, left-invariant by $\Phi_t$, that is $\alpha(\dd \Phi_tv)=\alpha(v)$ for each $t\in \R$ and each tangent vector $v\in TM$. Moreover, $\alpha \wedge (\dd \alpha)^d$ is the volume form $\omega$ of the manifold $M$.

\medskip

Concrete examples of contact Anosov flows are given by the geodesic flow in any negatively curved manifold. These flows have been classically studied see \cites{sinai1961geodesic,hopf1939statistik,anosov1967some}.

Given a contact Anosov flow $\Phi_t$, we define the underlying vector field $\uu:M\to TM$. Given a point $x\in M$, then $\uu(x)$ is defined as the velocity of the curve $\Phi_t(x):\R\to M$ at time equal to zero, that is to say
\begin{align}
\uu(x)=\left.\dot{\Phi}_t(x)\right|_{t=0}.
\end{align}
The flow $\Phi_t$ is the ODE flow induced by the vector field $\uu$,
\begin{align}
\dot{\Phi}_t(x)=\uu(\Phi_t(x)).
\end{align}
The contact property, in particular the hypothesis that the flow preserves the volume form, implies that $\uu$ is divergence free. Moreover, as the flow  $\Phi_t$ is $C^4$, the induced vector field $\uu$ is at least $C^3$.

We are interested in the mixing properties of the evolution induced by the flow. Namely, given a mean-free initial datum $f^{in}:M\to \R$, we consider $f(t,x):[0,\infty)\times M\to \R$ as the solution of
\begin{align}\label{eq:flowanosov}
\begin{cases}
\partial_t f(t,x)+g(\nabla_M f(t,x),\uu(x))=0,&\mbox{in $[0,\infty)\times M$}\\
f(0,x)=f^{in}(x), &\mbox{in $M$.}
\end{cases}
\end{align}
A consequence of the main result in \cite{Liv04} (see \cite{Liv04}*{Corollary 2.5}) is that  there exist positive constants $a_1$ and $a_2$ such that for each mean-free initial datum $f^{in}\in C^1$ and each $\varphi \in C^1$,
\begin{align}\label{eq:livexpmixing}
\left|\int_M f(t,x) \varphi(x) \dd\omega(x) \right| \leq a_1\| f^{in}\|_{C^1}\|\varphi\|_{C^1}\e^{-a_2 t}.
\end{align}
To be able to apply Theorem~\ref{thm:absexpmix}, we need to translate this decay of correlations into our $\H^{-1}$ framework. A consequence of this decay estimate is the following result, which is proven by approximation.

\begin{proposition}\label{prop:approxliv}
Given $f^{in}\in \H^1$, let $f$ be the solution to \eqref{eq:flowanosov} with initial datum $f^{in}$. There exists positive constants $\tilde{a}_1$ and $\tilde{a}_2$, such that
\begin{align}
\|f(t)\|_{\H^{-1}}\le \tilde{a}_1 \e^{-\tilde{a}_2 t}\|f^{in}\|_{\H^1}.
\end{align}
\end{proposition}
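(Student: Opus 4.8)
The plan is to realise the $\H^{-1}$ norm by duality against $\H^1$ test functions and to transfer the decay of correlations \eqref{eq:livexpmixing}, which is stated in the $C^1$ pairing, to the $\H^1$/$\H^{-1}$ pairing by a spectral truncation of \emph{both} the datum and the observable. Since $f(t)$ stays mean-free along \eqref{eq:flowanosov} and $\H^{-1}$ is dual to $\H^1$ through the $L^2_\omega$ inner product, I would begin from
\begin{align}
\|f(t)\|_{\H^{-1}}=\sup_{\|\varphi\|_{\H^1}\le 1}\left|\int_M f(t,x)\varphi(x)\,\dd\omega(x)\right|,
\end{align}
the supremum taken over mean-free $\varphi$. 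Writing $P_{\le N}$ for the spectral projection of $-\Delta_M$ onto eigenvalues $\le N$ and using the linearity of the flow \eqref{eq:flowanosov}, I would split the datum as $f^{in}=P_{\le N}f^{in}+(I-P_{\le N})f^{in}$ and the observable as $\varphi=P_{\le N}\varphi+(I-P_{\le N})\varphi$, producing four pairings.

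Three of the four pairings carry at least one high-frequency factor. These I would control purely by the conservation of the $L^2_\omega$ norm along \eqref{eq:flowanosov}, together with the high-frequency Poincar\'e inequality \eqref{eq:highfreq0}, which gives $\|(I-P_{\le N})\phi\|_{L^2}\le N^{-1/2}\|\phi\|_{\H^1}$. Since the flow is measure preserving, the $L^2$ norm of the evolved high-frequency datum does not grow, and each of these three terms is bounded by $C N^{-1/2}\|f^{in}\|_{\H^1}$, uniformly in $t$; there is no decay in time here, but the bound is small for large $N$.

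The genuinely dynamical contribution is the low--low pairing between the evolution of $P_{\le N}f^{in}$ and $P_{\le N}\varphi$. Both are finite linear combinations of eigenfunctions of $-\Delta_M$, hence mean-free and (by elliptic regularity on the $C^4$ manifold) of class $C^1$, so Liverani's estimate \eqref{eq:livexpmixing} applies and bounds this term by $a_1\|P_{\le N}f^{in}\|_{C^1}\|P_{\le N}\varphi\|_{C^1}\e^{-a_2 t}$. The crux of the argument --- and the step I expect to be the main obstacle --- is to quantify the $C^1$ norms of the truncations by an explicit power of $N$. Here I would combine the Sobolev embedding $\H^s\hookrightarrow C^1$, valid for $s>1+\tfrac{2d+1}{2}$ on mean-free functions, with the Bernstein-type identity
\begin{align}
\|P_{\le N}\phi\|_{\H^s}^2=\sum_{\lambda_j\le N}\lambda_j^{s-1}\,\lambda_j|\phi_j|^2\le N^{s-1}\|\phi\|_{\H^1}^2,
\end{align}
to obtain $\|P_{\le N}\phi\|_{C^1}\le C N^{\beta}\|\phi\|_{\H^1}$ with $\beta=\tfrac{s-1}{2}>\tfrac{2d+1}{4}$. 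The low--low term is then at most $C N^{2\beta}\e^{-a_2 t}\|f^{in}\|_{\H^1}$.

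Collecting everything yields
\begin{align}
\|f(t)\|_{\H^{-1}}\le C\left(N^{2\beta}\e^{-a_2 t}+N^{-1/2}\right)\|f^{in}\|_{\H^1},
\end{align}
and it remains to optimise the free cutoff $N$. Balancing the two contributions by choosing $N^{2\beta+1/2}=\e^{a_2 t}$ gives the exponential rate $\tilde a_2=a_2/(4\beta+1)$. This choice forces $N\ge 1$ only once $t$ is large enough; for the remaining bounded range of $t$ one uses the trivial bound $\|f(t)\|_{\H^{-1}}\le\lambda_1^{-1/2}\|f(t)\|_{L^2}=\lambda_1^{-1/2}\|f^{in}\|_{L^2}$ and absorbs it into the constant $\tilde a_1$. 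This produces the claimed estimate $\|f(t)\|_{\H^{-1}}\le\tilde a_1\e^{-\tilde a_2 t}\|f^{in}\|_{\H^1}$, with a mixing rate $\tilde a_2$ that is in general strictly smaller than the correlation rate $a_2$, the loss being dictated by the dimension through $\beta$.
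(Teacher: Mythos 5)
Your proposal is correct and follows essentially the same strategy as the paper's proof: regularize both the datum and the test function, apply Liverani's $C^1$--$C^1$ decay of correlations \eqref{eq:livexpmixing} to the regularized pair via the Sobolev embedding $\H^s\hookrightarrow C^1$, control the error terms through $L^2$ conservation along the flow, and optimize the free parameter. The only difference is cosmetic: you use the sharp spectral cutoff $P_{\le N}$ where the paper (Lemma \ref{lem:approximation}) uses heat-semigroup smoothing at time $\eps^2$, and the two coincide under the correspondence $\eps\leftrightarrow N^{-1/2}$ --- indeed your Bernstein bound and high-frequency Poincar\'e inequality are exactly the paper's estimates \eqref{eq:gC1reg} and \eqref{eq:approximating} in that dictionary.
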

To prove the previous proposition we need the following approximation lemma.
\begin{lemma}\label{lem:approximation}
There exists a positive constant $C$ depending on $M$, such that given any $h_0\in \H^1$ and any $\eps>0$, there exists $h_\eps\in C^\infty(M)$, such that
\begin{align}\label{eq:gC1reg}
\|h_\eps\|_{C^1}\le C \eps^{-\frac{2d+1}{2}} \|h_0\|_{\H^1}
\end{align}
and
\begin{align}\label{eq:approximating}
\|h_\eps-h_0\|_{L^2}\le C \eps \|h_0\|_{\H^1}.
\end{align}
\end{lemma}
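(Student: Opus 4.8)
The plan is to take $h_\eps$ to be a mollification of $h_0$ at spatial scale $\eps$. Since the ambient space $H$ consists of mean-free functions and $\lambda_1>0$, the Poincar\'e inequality $\|h_0\|_{L^2}\le \lambda_1^{-1/2}\|h_0\|_{\H^1}$ holds, so it suffices to build a smooth $h_\eps$ satisfying $\|h_\eps\|_{C^1}\lesssim \eps^{-(2d+1)/2}\|h_0\|_{\H^1}$ and $\|h_\eps-h_0\|_{L^2}\lesssim \eps\|h_0\|_{\H^1}$, with implicit constants depending only on $M$. Because $M$ carries no global convolution structure, I would localize: fix once and for all a finite atlas $\{(U_i,\phi_i)\}_{i=1}^N$ with $\phi_i:U_i\to V_i\subset\R^{2d+1}$ and a subordinate smooth partition of unity $\{\chi_i\}$, $\sum_i\chi_i\equiv1$, all depending only on $M$.

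Writing $n=2d+1$, decompose $h_0=\sum_i \chi_i h_0$. For each $i$, push the compactly supported piece forward to the chart, $u_i:=(\chi_i h_0)\circ\phi_i^{-1}$ (extended by zero to $\R^n$), mollify at scale $\eps$ by setting $u_i^\eps:=\rho_\eps * u_i$ with $\rho_\eps(z)=\eps^{-n}\rho(z/\eps)$ for a fixed $\rho\in C_c^\infty(\R^n)$, $\int\rho=1$, and pull back $h_i^\eps:=u_i^\eps\circ\phi_i$. For $\eps$ smaller than a threshold depending only on the atlas, $\mathrm{supp}\,u_i^\eps$ remains inside $V_i$, so $h_i^\eps\in C^\infty(M)$ is supported in $U_i$; finally set $h_\eps:=\sum_i h_i^\eps$.

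The $L^2$ bound \eqref{eq:approximating} follows from the standard mollification estimate $\|u_i^\eps-u_i\|_{L^2(\R^n)}\le C\eps\|\nabla u_i\|_{L^2(\R^n)}$, together with $\|\nabla u_i\|_{L^2}\lesssim \|\chi_i h_0\|_{H^1(M)}\lesssim \|h_0\|_{\H^1}$, where the product-rule term $(\nabla_M\chi_i)h_0$ is absorbed using Poincar\'e. For the $C^1$ bound \eqref{eq:gC1reg}, the crucial scaling is $\|\rho_\eps\|_{L^2(\R^n)}=\eps^{-n/2}\|\rho\|_{L^2}$: by Cauchy--Schwarz (equivalently Young's inequality $L^2\ast L^2\to L^\infty$),
\begin{align}
\|u_i^\eps\|_{L^\infty}\le \|\rho_\eps\|_{L^2}\|u_i\|_{L^2}, \qquad \|\nabla u_i^\eps\|_{L^\infty}=\|\rho_\eps \ast \nabla u_i\|_{L^\infty}\le \|\rho_\eps\|_{L^2}\|\nabla u_i\|_{L^2},
\end{align}
so that $\|h_i^\eps\|_{C^1}\lesssim \eps^{-n/2}(\|u_i\|_{L^2}+\|\nabla u_i\|_{L^2})\lesssim \eps^{-(2d+1)/2}\|h_0\|_{\H^1}$, again by Poincar\'e. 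Summing over the $N$ charts gives \eqref{eq:gC1reg}.

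The main point requiring care --- and the only place the geometry enters --- is that all passages between $M$ and the charts are uniformly bi-Lipschitz: since $M$ is compact and the atlas and partition of unity are fixed, the coordinate expression of $g$, the Jacobians $D\phi_i$, and the quantities $\|\chi_i\|_{C^1}$ are all bounded above and below by constants depending only on $M$, which is what makes the Euclidean and Riemannian norms of functions and of their gradients comparable throughout with $M$-dependent constants. I would also remark that the explicit mollifier is essential for hitting the exponent $\tfrac{2d+1}{2}$ exactly: a spectral truncation $P_{\le R}h_0$ or a heat-semigroup regularization $\e^{\tau\Delta_M}h_0$ would instead require the Sobolev embedding $H^{s}\hookrightarrow C^1$ with $s=\tfrac{2d+1}{2}+1$, which is borderline and fails, forcing a strictly worse power.
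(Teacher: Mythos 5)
Your proof is correct, but it follows a genuinely different route from the paper's. The paper regularizes globally with the heat semigroup: it sets $h_\eps:=\e^{\eps^2\Delta_M}h_0$, expands in the eigenbasis $\{\psi_k\}$ of $-\Delta_M$, obtains \eqref{eq:approximating} from $\sup_{\eta>0}(1-\e^{-\eta})^2/\eta<\infty$, proves the spectral bound $\|h_\eps\|_{\H^s}\le C\eps^{-(s-1)}\|h_0\|_{\H^1}$ for $s\ge 1$ from $\sup_{\eta>0}\eta^{s-1}\e^{-2\eps^2\eta}\le C\eps^{-2(s-1)}$, and then deduces \eqref{eq:gC1reg} from the Sobolev embedding $\H^s\hookrightarrow C^1$ with $s=\frac{2d+1}{2}+1$. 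Your chart-by-chart mollification with a partition of unity and Young's inequality $L^2\ast L^2\to L^\infty$ needs more bookkeeping (atlas constants, support thresholds), but it buys a genuine point of rigor: exactly as your closing remark anticipates, $s=\frac{2d+1}{2}+1$ is the borderline exponent at which $H^s\hookrightarrow C^1$ fails, so the paper's proof, read literally, invokes an endpoint embedding that is false, whereas your argument attains the exponent $\frac{2d+1}{2}$ legitimately. (The slip in the paper is harmless: in the proof of Proposition \ref{prop:approxliv} the power of $\eps$ is optimized away, so taking $s$ slightly above the endpoint, or replacing the embedding by an $L^2\to L^\infty$ heat-kernel smoothing bound, repairs it at no real cost.) Two small patches you should add to your write-up. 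First, the lemma asks for \emph{every} $\eps>0$, while your construction requires $\eps$ below an atlas-dependent threshold $\eps_0$; for $\eps\ge\eps_0$ simply take $h_\eps\equiv 0$ and use Poincar\'e, $\|h_0\|_{L^2}\le\lambda_1^{-1/2}\|h_0\|_{\H^1}\le\bigl(\lambda_1^{-1/2}\eps_0^{-1}\bigr)\,\eps\,\|h_0\|_{\H^1}$. Second, unlike the heat semigroup (which preserves the zero average automatically), mollification performed in weighted coordinate charts does not exactly preserve the mean, and mean-freeness of the approximant is what allows Proposition \ref{prop:approxliv} to apply Liverani's estimate \eqref{eq:livexpmixing} to $f^{in}_\eps$; this is fixed by subtracting from $h_\eps$ its average, which by \eqref{eq:approximating} and the mean-freeness of $h_0$ is a constant of size $O(\eps\|h_0\|_{\H^1})$, so both \eqref{eq:gC1reg} and \eqref{eq:approximating} survive with an adjusted constant.
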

\begin{proof}[Proof of Lemma \ref{lem:approximation}]
We consider $h:[0,\infty)\times M\to \R$ the solution to the heat equation with initial datum $h_0$. That is to say, $h$ solves
\begin{align}
\begin{cases}
\partial_t h(t,x)=\Delta_M h(t,x)&\mbox{on $[0,\infty)\times M$}\\
h(0,x)=h_0(x)&\mbox{on $M$.}
\end{cases}
\end{align}
Given $\{\psi_k\}_{k\in \N}$ the orthonormal Fourier basis of $L^2$ (i.e. the eigenbasis of $-\Delta_M$), we can express
\begin{align}
h_\eps(x):= h(\eps^2,x)= \sum_{k\in \N} \e^{-\eps^2 \lambda_k} \langle h_0, \psi_k\rangle_{L^2}\psi_k(x).
\end{align}
First, we compute how well $h_\eps$ approximates $h_0$, that is
\begin{align}
\|h_\eps-h_0\|_{L^2}^2=\sum_{k\in \N} (1-\e^{-\eps^2 \lambda_k})^2 |\langle h_0, \psi_k\rangle_{L^2}|^2\le \eps^2 \sup_{\eta>0} \frac{(1-\e^{-\eta})^2}{\eta}\sum_{k\in \N}  \lambda_k|\langle h_0, \psi_k\rangle_{L^2}|^2= C \eps^2\|h_0\|_{\H^1}^2,
\end{align}
which shows \eqref{eq:approximating}.

Given $s\ge 1$, we compute the Sobolev norm
\begin{align}
\|h_\eps\|_{\H^s}^2=\sum_{k\in \N} \lambda_k^{s} \e^{-2\eps^2 \lambda_k} |\langle h_0, \psi_k\rangle_{L^2}|^2\le \sup_{\eta>0} \eta^{s-1}\e^{-2\eps^2\eta}  \sum_{k\in \N} \lambda_k|\langle h_0, \psi_k\rangle_{L^2}|^2= C\eps^{-2(s-1)}\|h_0\|_{\H^1}^2. 
\end{align}
Using that $M$ is compact and the Sobolev embedding $\H^s\hookrightarrow C^1$ with $s=(2d+1)/2+1$, we get that there exists $C$ such that
\begin{align}
\|h_\eps\|_{C^1}\le C \eps^{-\frac{2d+1}{2}}\|h_0\|_{\H^1}.
\end{align}
The proof is over.
\end{proof}
We now proceed with the proof of Proposition~\ref{prop:approxliv}.
\begin{proof}[Proof of Proposition~\ref{prop:approxliv}]
We fix $f^{in}$, $\varphi\in \H^1$, we consider $f^{in}_\eps$, $\varphi_\eps\in C^1$ given by the approximation Lemma~\ref{lem:approximation} and $f_\eps(t)$ the solution to \eqref{eq:flowanosov} with initial condition $f^{in}_\eps$. By the triangle inequality,
\begin{align}\label{aux1}
\left|\int_M f(t)\varphi \dd\omega(x)\right|\le \left|\int_M f_\eps(t)\varphi_\eps \dd\omega(x)\right|+\left|\int_M (f_\eps(t)-f(t))\varphi\dd\omega(x)\right|+\left|\int_M f_\eps(t)(\varphi-\varphi_\eps) \dd\omega(x)\right|.
\end{align}
Using that \eqref{eq:flowanosov} is linear, that it preserves the volume form, the approximation property \eqref{eq:approximating} and the Poincar\'e inequality, we obtain that
\begin{align}\label{aux2}
\left|\int_M (f_\eps(t)-f(t))\varphi\dd\omega(x)\right|\le \|f^{in}_\eps-f^{in}\|_H\|\varphi\|_H\le C \eps \|f^{in}\|_{\H^1}\|\varphi\|_{\H^1}.
\end{align}
Similarly,
\begin{align}\label{aux3}
\left|\int_M f_\eps(t)(\varphi-\varphi_\eps) \dd\omega(x)\right|\le C \eps \|f^{in}\|_{\H^1}\|\varphi\|_{\H^1}.
\end{align}
Using the $C^1$ mixing estimate \eqref{eq:livexpmixing} and the approximation property \eqref{eq:gC1reg}, we get
\begin{align}\label{aux4}
\left|\int_M f_\eps(t)\varphi_\eps \dd\omega(x)\right|\le a_1 \e^{-a_2 t}\|f^{in}_\eps\|_{C^1}\|\varphi\|_{C_1}\le Ca_1 \e^{-a_2 t}\eps^{-\frac{2d+1}{2}} \|f^{in}\|_{\H^1}\|\varphi\|_{\H^1}.
\end{align}
Using \eqref{aux1}, \eqref{aux2}, \eqref{aux3} and \eqref{aux4}, we get that for every $\eps>0$, we have the inequality
\begin{align}
\left|\int_M f(t)\varphi \dd\omega(x)\right|\le C(a_1 \e^{-a_2 t}\eps^{-\frac{2d+1}{2}}+\eps)\|f^{in}\|_{\H^1}\|\varphi\|_{\H^1}.
\end{align}
The result follows by optimizing the previous inequality over $\eps>0$ and using that the $\H^{-1}$ is the dual of $\H^1$. The proof is over.
\end{proof}

It is checked in Section~\ref{rem:concrete}, that the viscous evolution equation 
\begin{align}
\partial_t f^\nu+g(\uu,\nabla_M f^\nu)=\nu\Delta_M f^\nu
\end{align}
satisfies our basic assumptions. Hence, thanks to Theorem \ref{thm:absexpmix}, we infer that all contact Anosov flows are relaxation enhancing at logarithmic time-scale.

\begin{corollary}\label{cor:anos}
Let $\uu:M \to TM$ be the generator of a contact Anosov flow. Then 
\begin{align}
    \|f^{\nu}(t)\|_{L^2}\le \e^{-c_0|\ln \nu|^{-2} t}\|f^{in}\|_{L^2},\qquad \forall t>|\ln \nu|^{2}, 
\end{align}
In particular, $\uu$ is relaxation enhancing at logarithmic time-scale proportional to $|\ln\nu|^{2}$.
\end{corollary}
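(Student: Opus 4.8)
The plan is to recognize Corollary~\ref{cor:anos} as an essentially immediate consequence of the abstract exponential mixing result, Theorem~\ref{thm:absexpmix}, combined with the concrete verification of hypotheses already carried out earlier in the paper. First I would set up the abstract framework of Section~\ref{sec:functional} in the present geometric context: take $H$ to be the mean-free subspace of $L^2_\omega(M)$, $A=-\Delta_M$ the Laplace--Beltrami operator, and $B=g(\uu,\nabla_M\cdot)$ the advection operator generated by the Anosov vector field $\uu$. The compactness of the embedding $\mathcal{D}(A)\hookrightarrow H$ and the strictly positive discrete spectrum of $-\Delta_M$ on a compact manifold are classical, so the functional setting applies verbatim. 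The divergence-free property of $\uu$ (which follows from the contact property, since the flow preserves the volume form $\omega=\alpha\wedge(\dd\alpha)^d$) guarantees that $B$ is antisymmetric on $H$, and the regularity $\uu\in C^3\subset W^{1,\infty}$ gives the key bound \eqref{eq:Bassumption}; this last point is exactly what was checked in Section~\ref{rem:concrete}, so I would simply invoke that computation.

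The second ingredient is the mixing estimate \eqref{eq:inviscid_dampingexp} with exponent $p=1$. Here I would cite Proposition~\ref{prop:approxliv}, which upgrades Liverani's $C^1$ decay-of-correlations bound \eqref{eq:livexpmixing} into the homogeneous negative-Sobolev estimate
\begin{align}
\|f(t)\|_{\H^{-1}}\le \tilde{a}_1\,\e^{-\tilde{a}_2 t}\|f^{in}\|_{\H^1},
\end{align}
valid for all mean-free $f^{in}\in\H^1$ and all $t\ge0$. Matching this against the hypothesis of Theorem~\ref{thm:absexpmix}, we read off $a_1=\tilde{a}_1$, $a_2=\tilde{a}_2$, and $p=1$, so that the resulting enhanced dissipation exponent is $q=2/p=2$.

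With both hypotheses in place, the conclusion is purely a matter of applying Theorem~\ref{thm:absexpmix} and specializing the exponent. The theorem yields, for every $f^{in}\in H$ and every sufficiently small $\nu$ (namely $\nu$ below the explicit threshold in \eqref{eq:ucsadf} determined by $\tilde{a}_1,\tilde{a}_2$), the bound $\|f^\nu(t)\|_{L^2}\le \e^{-c_0|\ln\nu|^{-2}t}\|f^{in}\|_{L^2}$ for all $t>|\ln\nu|^2$, which is exactly the claimed statement with the logarithmic time-scale $O(|\ln\nu|^2)$. The only genuine subtlety — and the step I would flag as the real content behind this corollary — is \emph{not} in the present proof at all but upstream, in establishing Proposition~\ref{prop:approxliv}: translating a correlation-decay estimate phrased against $C^1$ test functions into a norm-decay estimate between the Sobolev spaces $\H^{1}$ and $\H^{-1}$. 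That translation loses powers of the mollification parameter $\eps$ through the embedding $\H^s\hookrightarrow C^1$ (with $s=(2d+1)/2+1$) and must be balanced against the approximation error, via the heat-semigroup regularization of Lemma~\ref{lem:approximation} and a final optimization in $\eps$; crucially, because the underlying decay is exponential in $t$, the polynomial-in-$\eps$ losses only degrade the constants $\tilde a_1,\tilde a_2$ and do not affect the exponential rate, so the exponent $p=1$ is preserved. Granting that proposition, the corollary itself follows immediately, requiring no further estimates.
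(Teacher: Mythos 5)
Your proposal is correct and follows essentially the same route as the paper: verify the abstract hypotheses via the Riemannian passive-scalar setting of Section~\ref{rem:concrete}, invoke Proposition~\ref{prop:approxliv} (itself built on Lemma~\ref{lem:approximation} and Liverani's estimate \eqref{eq:livexpmixing}) to get the exponential mixing hypothesis \eqref{eq:inviscid_dampingexp} with $p=1$, and apply Theorem~\ref{thm:absexpmix} to read off the time-scale $|\ln\nu|^{2/p}=|\ln\nu|^{2}$. You also correctly identify that the real work lies upstream in Proposition~\ref{prop:approxliv}, and your observation about the smallness restriction \eqref{eq:ucsadf} on $\nu$ is, if anything, more precise than the corollary's own statement.
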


\subsection{Non-smooth Passive Scalars}

Another interesting case to consider is when the advecting velocity field $\uu$ is not bounded in $W^{1,\infty}_x$ but in $W^{1,r}_x$ for some $r<\infty$. Indeed, it remains an open problem whether there exists a velocity field $\uu\in L^\infty_t W^{1,\infty}_x(\mathbb{R}\times \mathbb{T}^2)$ which mixes any smooth initial datum $f^{in}$ exponentially. In a recent work of the third author and Zlatos \cite{EZ}, a velocity field $\uu$ is constructed with the property that for any $\tau_0$ the solution to 
\begin{align}
\label{passivescalar_EZ}
\begin{cases}
\partial_t f+\uu(t)\cdot\nabla f=0,&\text{on }  \T^2,\\
f(\tau_0)=f^{\tau_0},&\text{on } \T^2,
\end{cases}
\end{align}
with $f^{\tau_0}\in \H^1$ satisfy 
\begin{align}
\label{estimate_EZ}\|f(t)\|_{\H^{-1}}\leq \e^{-c(t-\tau_0)}\|f^{\tau_0}\|_{\H^1},
\end{align}
as $t\rightarrow\infty$ for some universal constant $c>0$. The velocity field constructed in \cite{EZ} belongs to $\uu\in L^\infty_tW^{1,r}$ for some $r>2$. A simple adaptation of the proof of Theorem \ref{thm:absexpmix} entails an enhanced dissipation estimate
for the viscous passive scalar problem
\begin{align}
\label{passivescalar_EZvisc}
\begin{cases}
\partial_t f^\nu+\uu(t)\cdot\nabla f^\nu=\nu\Delta f^\nu,&\text{on }  \T^2,\\
f(\tau_0)=f^{\tau_0},&\text{on } \T^2,
\end{cases}
\end{align}
as stated in the following theorem. 

\begin{theorem}
Let $\uu\in L^\infty_tW^{1,r}(\mathbb{R}\times\mathbb{T}^2)$ for some $r>2$. Assume that for all $\tau_0$ and $f^{\tau_0}\in \H^1$ the solution of \eqref{passivescalar_EZ} satisfies \eqref{estimate_EZ} for some fixed $c>0$. Then, there exist constants $C,c_0>0$ depending only on $c$ and $\|\uu\|_{L^\infty_t W^{1,r}_x}$ so that 
\begin{align}
  \label{dissipationW1p}  \| f^{\nu}(t)\|_{L^2}\le \e^{-c_0|\ln \nu|^{-2}\nu^{\frac{1}{r-1}} t}\|f^{in}\|_{L^2},\qquad \forall t>C\frac{|\ln \nu|^{2}}{\nu^\frac{1}{r-1}}.
\end{align} 
\end{theorem}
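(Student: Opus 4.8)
The plan is to reduce everything to the proof of Theorem~\ref{thm:absexpmix}. Indeed, the hypothesis \eqref{estimate_EZ} is \emph{verbatim} the exponential mixing assumption \eqref{eq:inviscid_dampingexp} with $p=1$, $a_1=1$ and $a_2=c$, so the only obstruction to quoting that theorem directly is the differential inequality \eqref{eq:visc_ode3}: this is the sole place where the boundedness assumption \eqref{eq:Bassumption} enters (through \eqref{eq:intest2} and the proximity estimate \eqref{eq:close1}). Since $\uu\notin L^\infty_t W^{1,\infty}_x$, the bound \eqref{eq:Bassumption} is unavailable with a finite $c_B$; the heart of the adaptation is therefore to replace it by a version with a $\nu$-\emph{dependent} constant, and then to track how this constant propagates through the argument.

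First I would re-derive \eqref{eq:visc_ode3} by hand, following the flat case of Section~\ref{rem:concrete} on $M=\T^2$. Starting from \eqref{eq:visc_ode2} and the identity $\Re\l \uu\cdot\nabla f^\nu,-\Delta f^\nu\r=\Re\int_{\T^2}\nabla\uu[\nabla f^\nu]\cdot\nabla f^\nu\,\dd x$ (the symmetric Hessian contribution dropping out by the divergence-free condition), one bounds the right-hand side pointwise by $|\nabla\uu|\,|\nabla f^\nu|^2$. Applying H\"older with exponents $r$ and $r'=r/(r-1)$ produces the factor $\|\nabla\uu\|_{L^r}\|\nabla f^\nu\|_{L^{2r'}}^2$, and since $2r'\in(2,4)$ for $r>2$, the two-dimensional Gagliardo--Nirenberg inequality gives $\|\nabla f^\nu\|_{L^{2r'}}^2\lesssim \|f^\nu\|_{H^1}^{2(r-1)/r}\|f^\nu\|_{H^2}^{2/r}$. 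A weighted Young inequality with weight $\eta\sim\nu^{1/r}$ then absorbs the $\|f^\nu\|_{H^2}^{2/r}$ factor into the dissipative term $\nu\|f^\nu\|_{H^2}^2$, and after optimizing one obtains exactly the form of \eqref{eq:visc_ode3},
\begin{align}
\ddt \|f^{\nu}\|^2_{H^1}+\nu \| f^\nu\|^2_{H^2}\leq \tilde{c}_B\|f^{\nu}\|_{H^1}^2,\qquad \tilde{c}_B= C\,\|\uu\|_{L^\infty_t W^{1,r}_x}^{r/(r-1)}\,\nu^{-1/(r-1)},
\end{align}
with $C$ depending only on $r$ and the Gagliardo--Nirenberg constant. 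The power $\nu^{-1/(r-1)}$ arises precisely because the conjugate exponent in Young's inequality is $r'/r=1/(r-1)$.

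With this inequality in hand I would run the proof of Theorem~\ref{thm:absexpmix} \emph{mutatis mutandis}, replacing $c_B$ by $\tilde{c}_B$ throughout. The consequences \eqref{eq:intest2} and \eqref{eq:close1} hold with this substitution (the harmless change $2\nu\to\nu$ in the dissipation only alters constants), while energy conservation \eqref{eq:consener} and the antisymmetry of $B$ persist because $\uu$ is divergence free. The restriction \eqref{eq:ucsadf} involves only $a_1,a_2,p$, hence only $c$, and is independent of $\tilde c_B$; moreover the inequality $\delta<\tfrac{1}{128c^2(1+\tilde c_B)}$ needed in \eqref{eq:close2_1exp} is still enforced by the very definition of $\delta$ in \eqref{choice of delta}, since there $\delta(1+\tilde c_B)\lesssim a_2^{2}$ by construction. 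The output is $\|f^\nu(t)\|_H\le \e^{-c_0|\ln\nu|^{-2}t}\|f^{in}\|_H$ with $c_0=\tfrac{1}{128}\min\{a_2^{2}/(32(1+\tilde c_B)),1\}$; as $\tilde c_B\sim \nu^{-1/(r-1)}\to\infty$ the minimum is attained at the first term, so $c_0\sim \nu^{1/(r-1)}$ and factoring this out yields the rate $|\ln\nu|^{-2}\nu^{1/(r-1)}$ of \eqref{dissipationW1p}. To land exactly on the stated threshold $t>C|\ln\nu|^2\nu^{-1/(r-1)}$, one may equivalently keep $\delta$ of order one and enlarge the outer window to $\sim|\ln\nu|^2\nu^{-1/(r-1)}$ while retaining the inner (mixing) window of length $\sim|\ln\nu|$, which rebalances the proximity constraint $\delta\lesssim T/(\tau^2\tilde c_B)$ to a $\nu$-independent bound.

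The main obstacle is the quantitative replacement of \eqref{eq:Bassumption}: obtaining the \emph{sharp} power $\nu^{-1/(r-1)}$ in $\tilde c_B$, which is exactly what accounts for the loss $\nu^{1/(r-1)}$ in the final rate, hinges on the two-dimensional Gagliardo--Nirenberg exponent together with the optimal choice of the Young weight. A secondary, purely technical point is justifying that $f^\nu\in L^2_{loc}(0,\infty;H^2)$ so that the integration by parts is legitimate despite $\uu$ being only $W^{1,r}$; this follows from parabolic smoothing for \eqref{passivescalar_EZvisc}, or by establishing all estimates for smooth approximations of $\uu$ and passing to the limit, in the spirit of the approximation argument of Lemma~\ref{lem:approximation}.
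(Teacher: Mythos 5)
Your proposal is correct and follows essentially the same route as the paper: the paper's proof is precisely the remark that one reruns Theorem \ref{thm:absexpmix} with the $H^2$ estimate \eqref{eq:intest2} replaced by a $W^{1,r}$-based one, which (after absorbing the $H^2$ factor) is exactly your $\nu$-dependent constant $\tilde c_B\sim \|\uu\|_{L^\infty_t W^{1,r}_x}^{r/(r-1)}\nu^{-1/(r-1)}$, the source of the loss $\nu^{1/(r-1)}$ in the final rate. The only cosmetic difference is that the paper keeps the replacement in integrated form, via H\"older in time with exponents $\frac{r-1}{r}$ and $\frac{1}{r}$, whereas you apply Gagliardo--Nirenberg and weighted Young pointwise in time before integrating; the two are equivalent and propagate through the contradiction argument identically.
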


The proof of the previous Theorem follows the same arguments as Theorem \ref{thm:absexpmix}. The only difference is that the $\H^2$ estimate \eqref{eq:intest2} is replaced by 
\begin{align}
2\nu\int_{\tau_0}^{\tau_0+t}\|f^\nu(s)\|_{\H^2}^2\leq \|f^\nu(\tau_0)\|_{\H^1}^2+c_1\|\uu\|_{L^\infty_tW^{1,r}_x}\left(\int_{\tau_0}^{\tau_0+t}\|f^\nu(s)\|_{\H^1}^2\dd s\right)^{\frac{r-1}{r}}\left(\int_{\tau_0}^{\tau_0+t}\|f^\nu(s)\|_{\H^2}^2\dd s\right)^{\frac1r}
\end{align} 
for any $t,\tau_0\geq 0$.  
We now make a few remarks on this theorem. 

\begin{remark}
The constant $c_0$ is independent of $r$ and for the specific velocity field constructed in \cite{EZ}, we have 
that the velocity field is relaxation enhancing on the time scale $\nu^{-0.62}.$ We are also not aware of any velocity field on $\mathbb{T}^2,$ other than the one constructed in \cite{EZ}, which causes \emph{all} $L^2$ mean-free solutions of the advection diffusion equation to dissipate at a time scale of $\nu^{-q}$ for any $q<1$. 
\end{remark}
\begin{remark}
In \cite{ACM16}, a velocity field $\uu\in L^\infty_t W^{1,\infty}_x(\mathbb{R}\times\mathbb{T}^2)$, which mixes \emph{prescribed} initial data exponentially fast, is constructed. Since our arguments involve mixing of the inviscid evolution with arbitrary initial time and arbitrary initial data, it does not follow from our arguments that the velocity field constructed in \cite{ACM16} has an enhanced dissipation time scale. 
\end{remark}

\subsection{Shear flows with fractional dissipation}\label{sub:fracto}
We now return to the example addressed in Section \ref{rem:concrete2}, in the special case when $\uu$ is a shear flow.
To be precise, assume that $u \in C^{n_0+2}(\T)$ has a \emph{finite} number
of critical points, denoted by $\bar y_1,\ldots, \bar y_N$, and where  $n_0\in \N$ 
denotes the maximal order of vanishing of $u'$ at the critical points, namely, the minimal integer
such that
\begin{align}
u^{(n_0+1)}(\bar y_i)\neq 0, \qquad \forall i=1,\ldots N.
\end{align}
On the two-dimensional torus $\T^2$ and with $\uu=(u(y),0)$, the problem \eqref{eq:fractpass} now reads
\begin{align}\label{eq:fractshear}
\de_t f^\nu +u \de_x f^\nu+\nu\Lambda^\gamma f^\nu=0,
\end{align}
for $\gamma\in(0,2)$, while its inviscid counterpart is
\begin{align}\label{eq:fractshearinv}
\de_t f^\nu +u \de_x f^\nu=0.
\end{align}
As in the previous section, we can take the Fourier transform in the $x$-variable of the above equations, and study the problem
for each $x$-Fourier coefficient.
Via the method of stationary phase, in \cite{BCZ15}*{Theorem A.1} it was proven that the solution
 to the inviscid equation
\eqref{eq:fractshearinv} 
with initial datum $f^{in}\in L^2$ such that
\begin{align}\label{eq:mean0x}
\int_\T f^{in}(x,y) \dd x=0, \qquad \text{for a.e. } y\in \T,
\end{align}
satisfies  the mixing estimate
\begin{align}
\|f(t)\|_{\H^{-1}} & \leq \frac{a}{(1+t)^\frac{1}{n_0+1}}\|f^{in}\|_{H^1}, \qquad \forall t\geq 0,
\end{align}
for some constant $a>0$. Here, $\H^s$ refers to the usual homogeneous Sobolev space.
In addition, since $\|f(t)\|_{L^2}=\|f^{in}\|_{L^2}$, we can use standard interpolation theory (see \cite{Tartar}*{Lemma 22.3})
to deduce that 
\begin{align}
\|f_k(t)\|_{\H^{-\gamma/2}} & \leq \frac{a^{\gamma/2}}{(1+t)^\frac{\gamma}{2(n_0+1)}}\|f^{in}\|_{\H^{\gamma/2}}, \qquad \forall t\geq 0.
\end{align}
Hence, Theorem \ref{thm:abspolymix} directly applies to this case and allows us to deduce the following result.

\begin{proposition}\label{prop:enhancedshear}
Let $f^{in}\in L^2$ be such that
\begin{align}
\int_\T f^{in}(x,y) \dd x=0, \qquad \text{for a.e. } y\in \T.
\end{align}
There exists $c_\gamma>0$ such that for every $f^{in}\in H$ there holds the estimate 
\begin{align}
    \|f^{\nu}(t)\|_H\le \e^{-c_\gamma\nu^{q_{n_0,\gamma}} t}\|f^{in}\|_H,\qquad \forall t>\frac{1}{\nu^{q_{n_0,\gamma}}},
\end{align}
with 
\begin{align}\label{eq:qng}
q_{n_0,\gamma}=\frac{2}{2+\frac{\gamma}{2(n_0+1)}}.
\end{align}
In particular, the shear flow $u$, in the context of passive scalars with fractional dissipation, 
is relaxation enhancing with time-scale $O(1/\nu^{q_{n_0,\gamma}})$.
\end{proposition}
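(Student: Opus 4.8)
The plan is to recognize the assertion as a direct instance of Theorem~\ref{thm:abspolymix}, once the fractional shear problem \eqref{eq:fractshear} is correctly cast in the abstract language of Section~\ref{sec:functional}. I would take $H$ to be the subspace of $L^2(\T^2)$ consisting of functions that are mean-free in $x$ (which is precisely the constraint \eqref{eq:mean0x}, and which is preserved by both the advection and the diffusion), and set $A=\Lambda^\gamma$ and $B=u\,\de_x$. On this space $A$ is strictly positive, self-adjoint, and has compact resolvent, with eigenvalues $(k^2+m^2)^{\gamma/2}$ indexed by the Fourier modes $(k,m)$ with $k\neq0$; the restriction $k\neq0$ guarantees a spectral gap, so $A$ fits the hypotheses of Section~\ref{sec:functional}. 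The shear structure together with incompressibility makes $B$ antisymmetric.

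The crucial bookkeeping is the identification of the abstract Sobolev scale. Since $\|\phi\|_{H^s}^2=\langle A^{s}\phi,\phi\rangle$ with $A=\Lambda^\gamma$, one has $\|\phi\|_{H^s}=\|\phi\|_{\H^{\gamma s/2}}$; in particular the abstract $H^1$ is the concrete $\H^{\gamma/2}$ and the abstract $H^{-1}$ is the concrete $\H^{-\gamma/2}$. With this dictionary, the structural hypothesis \eqref{eq:visc_ode3} required by Theorem~\ref{thm:abspolymix} is exactly the inequality \eqref{eq:Hgammavisc} established in Section~\ref{rem:concrete2}. Here the naive bound \eqref{eq:Bassumption} cannot be invoked because of the nonlocality of $\Lambda^\gamma$, which is precisely why \eqref{eq:visc_ode3} was proved directly there by the finite-difference argument.

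It then remains to supply the abstract mixing hypothesis \eqref{eq:inviscid_damping}. Because the shear problem is autonomous, by the remark following Theorem~\ref{thm:abspolymix} it suffices to verify it at $\tau_0=0$, namely $\|f(t)\|_{\H^{-\gamma/2}}\leq a\,t^{-p}\|f^{in}\|_{\H^{\gamma/2}}$. This is exactly the interpolated estimate derived just above the statement, with $p=\frac{\gamma}{2(n_0+1)}$ and the constant of \eqref{eq:inviscid_damping} taken to be $a^{\gamma/2}$, where one uses $(1+t)^{-p}\leq t^{-p}$ to absorb the shift in the denominator. (Equivalently, one could invoke Corollary~\ref{cor:mixHs} with $s=2/\gamma>1$ applied to the $\H^{-1}$ estimate, which reduces to the same computation.) Feeding $p=\frac{\gamma}{2(n_0+1)}$ into Theorem~\ref{thm:abspolymix} yields the enhanced-dissipation exponent
\begin{align}
q=\frac{2}{2+p}=\frac{2}{2+\frac{\gamma}{2(n_0+1)}}=q_{n_0,\gamma},
\end{align}
together with the stated exponential decay, with $c_\gamma$ read off from the constant $c_0$ of Theorem~\ref{thm:abspolymix} at this $p$.

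The proof is therefore essentially a corollary, and I expect no genuinely hard step: the two potential difficulties---verifying the energy inequality \eqref{eq:visc_ode3} for the nonlocal diffusion, and producing a polynomial decay rate in the correct negative norm---have already been dispatched in Section~\ref{rem:concrete2} and by the stationary-phase/interpolation estimate quoted from \cite{BCZ15}, respectively. The only point demanding genuine care is the matching of the two layers of Sobolev indices (abstract versus concrete), since a careless identification would produce the wrong power of $\gamma$ in $q_{n_0,\gamma}$.
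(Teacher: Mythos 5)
Your proposal is correct and follows essentially the same route as the paper: identify the abstract $H^1$ with $\H^{\gamma/2}$ (so that the hypothesis \eqref{eq:visc_ode3} is supplied by \eqref{eq:Hgammavisc} rather than by \eqref{eq:Bassumption}), feed the interpolated mixing estimate with $p=\frac{\gamma}{2(n_0+1)}$ into Theorem~\ref{thm:abspolymix}, and read off $q_{n_0,\gamma}=\frac{2}{2+p}$. Your careful matching of the abstract versus concrete Sobolev indices, including the equivalent detour through Corollary~\ref{cor:mixHs} with $s=2/\gamma$, is precisely the point the paper relies on.
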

The condition \eqref{eq:mean0x} is the real-variable counterpart of considering $x$-Fourier coefficients corresponding to $k\neq 0$.
This is necessary, as the $x$-average of the solution satisfies the one-dimensional fractional heat equation, which clearly does not undergo
any enhanced dissipation.

\begin{remark}
Proposition \ref{prop:enhancedshear} is stated in real-variables for the sake of clarity, but one could also state a corresponding result
for each $x$-Fourier mode. Our choice is dictated by the form of the $\H^{\gamma/2}$ estimate \eqref{eq:Hgammavisc}, which does
not exploit any additional structure that may appear due to the simple form of the background flow 
(as opposed to the case of shear flows with ordinary diffusion $-\Delta$).  
\end{remark}

\begin{remark}
The case $\gamma=2$ was treated in \cite{BCZ15} using the so-called hypocoercivity method (see \cite{Villani09}). In particular,
the (optimal) enhanced dissipation time-scale found there was $1/\nu^\frac{n_0+1}{n_0+3}$, which does \emph{not} correspond to   
that deduced in \eqref{eq:qng} in the limit $\gamma\to 2$. However, the non-local nature of the fractional dissipation makes it difficult
to use hypocoercivity on this particular problem, since commutators between diffusion and advection are not as simple (for example, due
to the fact that the fractional laplacian $\Lambda$ only satisfies a generalized Leibniz rule).
This shows once again the generality and versatility of our method.
\end{remark}

\section{Spiral flows}\label{sec:spirale}
Let $B_1\subset \R^2$ be the open unit ball, centered at $(x,y)=(0,0)$, and consider the inviscid problem
\begin{align}\label{eq:inviscidspiral}
\begin{cases}
\de_t f+ \uu\cdot \nabla f=0, \quad &\text{in } B_1, \ t\geq 0,\\
f(0)=f^{in}, \quad &\text{in } B_1,
\end{cases}
\end{align}
where the mean-free scalar $f$ evolves under the autonomous velocity field
\begin{align}\label{eq:spiralflow}
\uu(r,\theta)=r^{1+\alpha}\begin{pmatrix}
-\sin\theta\\
\cos\theta
\end{pmatrix},
\end{align} 
and $\alpha\geq1$ is a real parameter.
It is worth noticing that $\uu$ is a stationary solution of the Euler equations, namely
\begin{align}
(\uu\cdot\nabla)\uu=\nabla P, \qquad P=\frac{|\uu|^2}{2(1+\alpha)}.
\end{align}
Stability of such radial solution in the 2D Euler equations were recently addressed in \cites{BCZV17,CZZ18, Zcirc17}.
From the mixing point of view in the passive scalar problem \eqref{eq:inviscidspiral}, the case $\alpha=1$ was recently studied in great 
detail in \cite{CLS17}, where the decay
of the $\H^{-1}$ and the geometric mixing scale was proven under the natural 
the natural condition orthogonality condition
\begin{align}\label{eq:ortho}
\int_{\de B_\rho} f^{in} \dd S_\rho=0,
\end{align}
for almost every $\rho>0$, where $\dd S_\rho$ is the uniform measure on the circle
of radius $\rho\in(0,1)$. By passing to polar coordinates $(r,\theta)\in [0,1)\times \T$ in  \eqref{eq:inviscidspiral}, we deduce that
\begin{align}\label{eq:inviscidspiralpol}
\begin{cases}
\de_t f+ r^{\alpha} \de_\theta f=0, \quad &\text{in } (r,\theta)\in [0,1)\times \T, \ t\geq 0,\\
f(0)=f^{in}, \quad &\text{in } (r,\theta)\in [0,1)\times \T.
\end{cases}
\end{align}
In this formulation, the analogies with the planar shear flow case become apparent.
By expanding the solution $f$ to \eqref{eq:inviscidspiralpol} as a Fourier series in the angular $\theta$ variable, namely
\begin{align}
f(t,r,\theta)=\sum_{k\in \ZZ} f_k(t,r)\e^{ik\theta}, \qquad f_k(t,r)=\frac{1}{2\pi}\int_0^{2\pi}f(t,r,\theta)\e^{-ik\theta}\dd \theta.
\end{align} 
for any integer $k$ we have that
\begin{align} \label{eq:inviscidspiralpolfourier1}
\begin{cases}
\de_t f_k+  ik r^{\alpha}f_k=0, \quad &\text{in } r\in [0,1), \ t\geq 0,\\
f_k(0)=f^{in}_{k}, \quad &\text{in } r\in [0,1).
\end{cases}
\end{align} 
Note that $f_0(t,r) = f^{in}(r)$ (i.e., the $\theta$-average of the solution is conserved), and therefore we restrict to $k\neq 0$ without loss
of generality. This is precisely the orthogonality condition \eqref{eq:ortho}. In the radial setting, we then define the usual Sobolev spaces in
terms of the Laplace operator
\begin{align}\label{eq:laplaceopk}
\Delta_k := \de_{rr} + \frac{1}{r}\de_r - \frac{k^2}{r^2}.
\end{align}
In particular, to be consistent with the notation of Section \ref{sec:functional}, we define
\begin{align}
H=\left\{\phi:[0,1)\to \mathbb C:\ \| \phi\|^2_H=\int_0^1 |\phi(r)|^2r\dd r<\infty \right\}, \qquad \l \phi_1,\phi_2\r=\int_0^1  \phi_1(r)\overline{\phi_2(r)}r\dd r,
\end{align}
and
\begin{align}\label{eq:H1normspiral}
H^1=\left\{\phi:[0,1)\to \mathbb C:\ \| \phi\|^2_{H^1}=\int_0^1 \left[|\de_r \phi(r)|^2+\frac{k^2}{r^2} |\phi(r)|^2\right]r\,\dd r<\infty\right\}.
\end{align}
The corresponding $H^{-1}$ norm is then defined by duality as
\begin{align}
\|\phi\|_{H^{-1}}=\sup_{\eta \in H^1: \|\eta\|_{H^1}=1}  \left|\int_0^1  \phi(r)\overline{\eta(r)}r \dd r\right|.
\end{align}
Notice that, since $k\neq 0$, we have
\begin{align}\label{eq:sob}
\|\phi\|_{L^\infty}\leq \| \phi\|_{H^1}.
\end{align}

\subsection{Mixing by spiral flows}\label{sub:spiralmix}

From the method of stationary phase (see  \cite{BigStein}*{Proposition 3, Chapter XIII}), we deduce the following mixing result.
\begin{proposition}[Mixing by spiral flows]\label{prop:spiralmix}
Fix $k \neq 0$, $\alpha\geq1$ and $f^{in}_{k}\in H^1$. Let $f_k$ be the solution to \eqref{eq:inviscidspiralpolfourier1}. Then
\begin{align}
\|f_k(t)\|_{H^{-1}} & \leq \frac{a_\alpha}{(1+|k|t)^{p_\alpha}}\|f^{in}_{k}\|_{H^1}, \qquad \forall t\geq 0,\label{ineq:spiraldecay}
\end{align}
where
\begin{align}
p_\alpha=\frac{2}{\max\{\alpha,2\}}
\end{align}
and $a_\alpha> 0$ is a constant independent of $k,t$ and $f^{in}$.
\end{proposition}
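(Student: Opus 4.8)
The plan is to exploit the explicit solvability of the scalar ODE \eqref{eq:inviscidspiralpolfourier1}. Integrating in time gives $f_k(t,r)=\e^{-ikr^\alpha t}f^{in}_{k}(r)$, so the inviscid evolution acts as the pure multiplier $\e^{-ikr^\alpha t}$; in particular $\|f_k(t)\|_H=\|f^{in}_{k}\|_H$ is conserved, which together with the Poincar\'e inequality already yields $\|f_k(t)\|_{H^{-1}}\le \|f_k(t)\|_H\le C\|f^{in}_{k}\|_{H^1}$. This disposes of the regime $|k|t\lesssim 1$ and supplies the ``$1+$'' in the denominator of \eqref{ineq:spiraldecay}; the real content is the decay when $|k|t\ge 1$.

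By the definition of the dual norm it suffices to estimate the pairing $I(\eta)=\int_0^1 \e^{-ikr^\alpha t}f^{in}_{k}(r)\overline{\eta(r)}\,r\,\dd r$ uniformly over test functions $\eta$ with $\|\eta\|_{H^1}=1$. The phase $r\mapsto kr^\alpha t$ has derivative $k\alpha t\,r^{\alpha-1}$, whose only zero on $[0,1]$ is at $r=0$, where it vanishes to order $\alpha-1$. Oscillation is therefore available away from the origin, while near the origin I will instead use that the $H^1$ norm in \eqref{eq:H1normspiral} carries the weight $k^2/r^2$, which forces test functions to be small there; quantitatively, $\|\phi/\sqrt r\|_{L^2}\le |k|^{-1}\|\phi\|_{H^1}$ and $\|\phi'\sqrt r\|_{L^2}\le\|\phi\|_{H^1}$, together with $\|\phi\|_{L^\infty}\le\|\phi\|_{H^1}$ from \eqref{eq:sob}.

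I would then split $I(\eta)$ at the balance scale $\rho=(|k|t)^{-1/\alpha}\le 1$. On $[0,\rho]$ I discard the oscillation and apply Cauchy--Schwarz with the weight, bounding $\int_0^\rho |\phi|^2 r\,\dd r\le \rho^2\|\phi/\sqrt r\|_{L^2}^2\le \rho^2|k|^{-2}\|\phi\|_{H^1}^2$ for $\phi\in\{f^{in}_{k},\eta\}$, which gives a contribution $\lesssim \rho^2|k|^{-2}\le (|k|t)^{-2/\alpha}$. On $[\rho,1]$ I integrate by parts once against $\partial_r \e^{-ikr^\alpha t}=-ik\alpha t\,r^{\alpha-1}\e^{-ikr^\alpha t}$, using $|k\alpha t\,r^{\alpha-1}|\ge \alpha(|k|t)^{1/\alpha}$ there. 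The boundary term at $r=\rho$ is $\lesssim(|k|t)^{-2/\alpha}$ (this is exactly where the choice of $\rho$ makes the power come out right), the one at $r=1$ is $\lesssim (|k|t)^{-1}$, and the interior term --- after expanding $\partial_r\big(f^{in}_{k}\overline{\eta}\,r/r^{\alpha-1}\big)$ and bounding each resulting factor by the weighted inequalities above --- is $\lesssim (|k|t)^{-2/\alpha}$ when $\alpha\ge 2$ and $\lesssim(|k|t)^{-1}$ when $\alpha\le 2$. Collecting the terms gives $|I(\eta)|\lesssim (|k|t)^{-\min\{1,2/\alpha\}}\|f^{in}_{k}\|_{H^1}\|\eta\|_{H^1}$, and $\min\{1,2/\alpha\}=p_\alpha$; taking the supremum over $\eta$ and combining with the conservation bound yields \eqref{ineq:spiraldecay} with $a_\alpha$ depending only on $\alpha$.

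The main obstacle is the degenerate critical point at $r=0$ when $\alpha>2$: a single integration by parts over all of $[0,1]$ produces the non-integrable weight $r^{2-\alpha}$ and a boundary term blowing up as the cut-off tends to $0$, so a pure stationary-phase estimate for the amplitude $f^{in}_{k}\overline{\eta}\,r$ would only give the exponent $1/\alpha$. The improvement to $2/\alpha$ --- the ``special structure'' alluded to before the statement --- comes precisely from the angular weight $k^2/r^2$ in $H^1$: it controls the near-field through $\|\phi/\sqrt r\|_{L^2}\le|k|^{-1}\|\phi\|_{H^1}$ and also tames the interior term via weighted Cauchy--Schwarz, instead of the crude $L^\infty$ bound that would introduce a logarithmic loss at $\alpha=2$. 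Choosing the cut-off exactly at the scale $(|k|t)^{-1/\alpha}$, where the oscillation frequency $|k|t\,r^{\alpha-1}$ balances the weight frequency $|k|/r$, is what optimizes the two contributions against one another and produces the claimed exponent $p_\alpha=2/\max\{\alpha,2\}$.
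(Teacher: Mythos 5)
Your proposal is correct and follows essentially the same route as the paper's proof: reduce to the dual pairing, integrate by parts against the phase $\e^{-iktr^{\alpha}}$ away from the degenerate point $r=0$, bound the near-origin piece crudely, and balance the two contributions at the scale $(|k|t)^{-1/\alpha}$ (the paper introduces a free cut-off $\eps$ and optimizes at the end, which is the same thing, and for $\alpha\in[1,2)$ it skips the splitting altogether since $r^{2-\alpha}$ is integrable). The only cosmetic differences are that you fix the cut-off upfront and use the weighted Cauchy--Schwarz bound $\int_0^\rho|\phi|^2 r\,\dd r\le \rho^2|k|^{-2}\|\phi\|_{H^1}^2$ near the origin where the paper uses the $L^\infty$ embedding \eqref{eq:sob}, which gains an irrelevant factor of $|k|^{-2}$.
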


\begin{remark}
In physical space and for initial data satisfying \eqref{eq:ortho}, estimate \eqref{ineq:spiraldecay} can be written as
\begin{align}
\|f(t)\|_{H_{r,\theta}^{-1}} & \leq \frac{a_\alpha}{(1+t)^{p_\alpha}}\|f^{in}\|_{H_r^1H^{-1}_\theta}, \qquad \forall t\geq 0,
\end{align}
where the Sobolev norm in the $\theta$ variable is obtained in the usual Fourier sense. This estimate, in the special case $\alpha=1$,
sharpens the one obtained in \cite{CLS17}.
\end{remark}

 \begin{proof}[Proof of Proposition \ref{prop:spiralmix}]
 Since $\|f_k(t)\|_{H^{-1}}\leq \|f^{in}_{k}\|_{H}$ for every $t\geq 0$, it is enough to prove the above bound \eqref{ineq:spiraldecay} for 
 $t\geq1$.
From \eqref{eq:inviscidspiralpolfourier1}, we can write
\begin{align}
f_k(t,r)=\e^{-ik tr^{\alpha}} f^{in}_{k}(r).
\end{align}
Let $\eta \in H^1$ be such that $\|\eta\|_{H^1}=1$. We first deal with the case $\alpha\in [1,2)$. In this case
\begin{align}\label{eq:spiralflowfirstcase}
\int_0^1  \e^{-ik tr^{\alpha}} f^{in}_{k}(r)\overline{\eta(r)}r\dd r
&=-\frac{1}{\alpha ikt}\int_0^1 \frac{1}{ r^{\alpha-1}} \frac{\dd}{\dd r}\left(\e^{-ikt r^{\alpha}}\right) f^{in}_{k}(r)\overline{\eta(r)}r\dd r\notag\\
&= \frac{1}{\alpha ikt}\int_0^1  \e^{-ik tr^{\alpha}} \frac{\dd}{\dd r}\left[f^{in}_{k}(r)\overline{\eta(r)}r^{2-\alpha}\right]\dd r-\frac{r^{2-\alpha}}{\alpha ikt}  \e^{-ik tr^{\alpha}} f^{in}_{k}(r)\overline{\eta(r)}\bigg|_{r=0}^1.
\end{align}
Thus, in view of \eqref{eq:sob} and the restriction $\alpha\in [1,2)$, we have the bound on the second term
\begin{align}\label{eq:secondtermspiralflow}
\left|\frac{r^{2-\alpha}}{\alpha ikt}  \e^{-ik tr^{\alpha}} f^{in}_{k}(r)\overline{\eta(r)}\bigg|_{r=0}^1\right|\leq \frac{a_\alpha}{ |k|t}\|f^{in}_{k}\|_{H^1},
\end{align}
for some constant $a_\alpha>0$. Expanding the derivative in the first term of \eqref{eq:spiralflowfirstcase}, typical terms to bound are
\begin{align}
\left|\int_0^1 \de_rf^{in}_{k}(r)\overline{\eta(r)}r^{2-\alpha}\dd r\right|
\leq\left(\int_0^1|\de_rf^{in}_{k}(r)|^2r\dd r\right)^{1/2}\left(\int_0^1\frac{k^2}{r^2}|\eta(r)|^2 r\dd r\right)^{1/2}
\end{align}
and
\begin{align}
\left|\int_0^1 f^{in}_{k}(r)\overline{\eta(r)}r^{1-\alpha}\dd r \right|
\leq\frac{1}{|k|^2}\left(\int_0^1\frac{k^2}{r^2}|f^{in}_{k}(r)|^2 r\dd r\right)^{1/2}\left(\int_0^1\frac{k^2}{r^2}|\eta(r)|^2 r\dd r\right)^{1/2},
\end{align}
so that
\begin{align}\label{eq:firsttermspiralflow}
\left|\frac{1}{\alpha ikt}\int_0^1  \e^{-ik tr^{\alpha}} \frac{\dd}{\dd r}\left[f^{in}_{k}(r)\overline{\eta(r)}r^{2-\alpha}\right]\dd r\right|\leq
\frac{a_\alpha}{|k|t}\|f^{in}_{k}\|_{H^1},
\end{align}
where we have used the explicit form of the $H^1$ norm \eqref{eq:H1normspiral} and $|k|\ge 1$. Combining \eqref{eq:spiralflowfirstcase}, \eqref{eq:secondtermspiralflow}, \eqref{eq:firsttermspiralflow} and taking the supremum over all $\eta$ gives the result for $\alpha\in [1,2)$. 

We now deal with the case $\alpha\geq 2$. Let $\eps\in(0,1)$ to be fixed later. Then
\begin{align}
\left|\int_0^1  \e^{-ik r^{\alpha}} f^{in}_{k}(r)\overline{\eta(r)}r\dd r\right|
&\leq \left|\int_0^\eps  \e^{-ik r^{\alpha}} f^{in}_{k}(r)\overline{\eta(r)}r\dd r\right| + \left|\int_\eps^1  \e^{-ik r^{\alpha}} f^{in}_{k}(r)\overline{\eta(r)}r\dd r\right|.
\end{align}
Estimating the first piece, using \eqref{eq:sob} and that $\eta$ has unit norm gives
\begin{align}
\left|\int_0^\eps  \e^{-ik tr^{\alpha}} f^{in}_{k}(r)\overline{\eta(r)}r\dd r\right| \leq\eps^2  \|f^{in}_{k}\|_{H^1}.
\end{align}
Regarding the second piece, we integrate by parts as
\begin{align}
\int_\eps^1  \e^{-ik tr^{\alpha}} f^{in}_{k}(r)\overline{\eta(r)}r\dd r
&=-\frac{1}{\alpha ikt}\int_\eps^1 \frac{1}{ r^{\alpha-1}} \frac{\dd}{\dd r}\left(\e^{-ik tr^{\alpha}}\right) f^{in}_{k}(r)\overline{\eta(r)}r\dd r\notag\\
&= \frac{1}{\alpha ikt}\int_\eps^1  \e^{-ik tr^{\alpha}} \frac{\dd}{\dd r}\left[\frac{f^{in}_{k}(r)\overline{\eta(r)}}{r^{\alpha-2}}\right]\dd r-\frac{1}{\alpha ikt} \frac{1}{ r^{\alpha-2}} \e^{-ik tr^{\alpha}} f^{in}_{k}(r)\overline{\eta(r)}\bigg|_{r=\eps}^1.
\end{align}
Using \eqref{eq:sob}, it is not hard to see that
\begin{align}
\left|\int_\eps^1  \e^{-ik rt^{\alpha}} f^{in}_{k}(r)\overline{\eta(r)}r\dd r\right|\leq \frac{1}{\eps^{\alpha-2}} \frac{a_\alpha}{kt} \|f^{in}_{k}\|_{H^1},
\end{align}
for some constant $a_\alpha>0$, which is independent of $k$ and $t$. Therefore, for every $\eps\in (0,1)$, we have that
\begin{align}
\left|\int_0^1  f_k(t,r)\overline{\eta(r)}r\dd r\right|\leq \left[\eps^2+ \frac{1}{\eps^{\alpha-2}} \frac{a_\alpha}{kt} \right]\|f^{in}_{k}\|_{H^1},
\end{align}
so that, by optimizing in $\eps$, we find for $t\gg 1$ and up to an innocuous change in the constant $a_\alpha$ that
\begin{align}
\left|\int_0^1  f_k(t,r)\overline{\eta(r)}r\dd r\right|\leq \frac{a_\alpha}{(|k|t)^{2/\alpha}} \|f^{in}_{k}\|_{H^1}.
\end{align}
We now take the supremum in $\eta$ and obtain the desired result, concluding the proof.
 \end{proof}
 
\subsection{Enhanced dissipation in spiral flows}\label{sub:spiralenhanced}
 
When adding dissipation to \eqref{eq:inviscidspiralpol}, we obtain the advection diffusion equation
\begin{align}\label{eq:viscousspiralpol}
\begin{cases}
\de_t f^\nu+ r^{\alpha} \de_\theta f^\nu=\nu\Delta_{r,\theta} f^\nu, \quad &\text{in } (r,\theta)\in [0,1)\times \T, \ t\geq 0,\\
f^\nu(0)=f^{in}, \quad &\text{in } (r,\theta)\in [0,1)\times \T,
\end{cases}
\end{align}
where we supplement the system with the classical no-flux boundary condition
\begin{align}
\de_r f^\nu(1,\theta)=0, \qquad \forall \theta \in \T.
\end{align}
Analogously to  \eqref{eq:inviscidspiralpolfourier1}, we obtain
\begin{align} \label{eq:viscousspiralpolfourier1}
\begin{cases}
\de_t f^\nu_k+  ik r^{\alpha}f^\nu_k=\nu\Delta_kf^\nu_k, \quad &\text{in } r\in [0,1), \ t\geq 0,\\
f^\nu_k(0)=f^{in}_{k}, \quad &\text{in } r\in [0,1),
\end{cases}
\end{align} 
where $\Delta_k$ is defined in \eqref{eq:laplaceopk}, and with boundary conditions
\begin{align}\label{eq:nofluxBC}
\de_r f_k^\nu(1)=0, \qquad \forall k \in \ZZ.
\end{align}
In view of Proposition \ref{prop:spiralmix}, it is clear that we could apply 
directly Theorem \ref{thm:abspolymix} and deduce an enhanced dissipation time-scale. Instead, we prefer to 
exploit the particular structure of \eqref{eq:viscousspiralpolfourier1} and obtain and even faster time-scale.

\begin{theorem}\label{thm:enhancespiral}
For each $k\neq 0$ and each $\nu\in(0,1)$ such that $\nu|k|^{-1}<1$, consider the passive scalar problem \eqref{eq:viscousspiralpolfourier1}. Then, for every $f^{in}\in H$ there holds the estimate 
\begin{align}
    \|f^{\nu}_k(t)\|_H\le \e^{-c_0\nu^{q_\alpha} |k|^{1-q_\alpha} t}\|f^{in}_k\|_H,\qquad \forall t>\frac{1}{\nu^{q_\alpha}|k|^{1-q_\alpha}},
\end{align}
with 
\begin{align}
q_\alpha=\frac{4-p_\alpha}{4+p_\alpha}, \qquad  p_\alpha=\frac{2}{\max\{\alpha,2\}},
\end{align}
and
\begin{align}
 c_\alpha=\frac{1}{128}\min\left\{\frac{1}{64(1+\alpha^2)},\frac{1}{a_\alpha 4^{p_\alpha}}\right\}.
\end{align}
In particular, the spiral flow $\uu$ in \eqref{eq:spiralflow}, in the context of passive scalars, 
is  relaxation enhancing with time-scale $O(1/\nu^{q_\alpha})$.
\end{theorem}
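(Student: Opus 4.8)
The plan is to re-run the contradiction argument of Theorem \ref{thm:abspolymix} in the concrete setting $H$, $A=-\Delta_k$, $B=ikr^\alpha$ of \eqref{eq:viscousspiralpolfourier1}, with the inviscid input now the sharp estimate of Proposition \ref{prop:spiralmix}, and to squeeze the extra factor of $|k|$ out of the two places where it appears: the mixing rate $(1+|k|t)^{-p_\alpha}$ and the operator $A=-\Delta_k$, whose $H^1$-norm \eqref{eq:H1normspiral} carries the weight $k^2/r^2$, so that $\lambda_1\gtrsim k^2$. First I would verify the abstract hypotheses: the energy identity holds because $\Re\langle ikr^\alpha f,f\rangle=0$, and for the commutator one integrates by parts to find $|\Re\langle Bf,Af\rangle|\le c\,|k|\,\|f\|_H\|f\|_{H^1}$, the sharp ``kinetic-type'' form noted in Section \ref{rem:concrete3}; combined with the Poincar\'e inequality $\|f\|_H\le|k|^{-1}\|f\|_{H^1}$ this yields \eqref{eq:Bassumption} with $c_B\sim1+\alpha^2$.

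The cleanest way to see the announced form $\nu^{-q_\alpha}|k|^{-(1-q_\alpha)}$ of the time-scale is to rescale time by $s=|k|t$: the equation becomes $\partial_s f+i\,\mathrm{sgn}(k)\,r^\alpha f=\tilde\nu\,\Delta_k f$ with effective diffusivity $\tilde\nu=\nu/|k|<1$, while the mixing rate becomes the clean $(1+s)^{-p_\alpha}$. Running the abstract scheme in the variable $s$ produces an $s$-window $\tilde\nu^{-Q}$, hence a $t$-window $\tilde\nu^{-Q}/|k|=\nu^{-Q}|k|^{-(1-Q)}$, so that the exponents in $\nu$ and in $|k|$ are automatically tied as $Q$ and $1-Q$. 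A naive application using the low-frequency bound $\|P_{\le R}f\|_H^2\le R\|f\|_{H^{-1}}^2$ (that is, \eqref{eq:lowfreq0} at $s=1$) would give the balance $Q(2+p_\alpha)=2$ and thus $Q=2/(2+p_\alpha)$, exactly the exponent of Theorem \ref{thm:abspolymix}.

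The gain to $q_\alpha=\tfrac{4-p_\alpha}{4+p_\alpha}$ comes from replacing this crude low-frequency control by the sharper, frequency-localized estimate that the explicit inviscid solution $f_k(t,r)=\e^{-iktr^\alpha}f^{in}_k(r)$ makes available. Testing $f_k(t)$ against eigenfunctions with $\lambda_j\le R$ and integrating by parts in $r$ exactly as in the proof of Proposition \ref{prop:spiralmix} — now exploiting that these test functions oscillate only at frequency $\lesssim\sqrt R$ — should yield a bound of the form $\|P_{\le R}f_k(t)\|_H^2\lesssim (|k|t)^{-2p_\alpha}R^{\,1-p_\alpha/2}\|f^{in}_k\|^2$, with a fractional power of $R$ strictly below $1$. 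Equivalently, the spiral flow obeys an $H^{-s}\to H^{s}$ mixing estimate with $s=1-p_\alpha/2$, and feeding this into the optimization of Corollary \ref{cor:mixHs} replaces $Q(2+p_\alpha)=2$ by the balance $Q(1+s+p_\alpha)=1+s$ with $s=1-p_\alpha/2$, whose solution is precisely $Q=q_\alpha=\tfrac{4-p_\alpha}{4+p_\alpha}$.

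With these ingredients the contradiction step proceeds as before: assuming $\nu\int_{\tau_\star}^{\tau_\star+T}\|f^\nu_k\|_{H^1}^2\,\mathrm dt<\delta$ over a window $T\sim(\nu^{q_\alpha}|k|^{1-q_\alpha})^{-1}$, I would extract a good time $\tau_0$ at which both $\nu\|f^\nu_k(\tau_0)\|_{H^1}^2$ and a short-window integral are small, use the proximity estimate \eqref{eq:close1} — controlled on a sub-window of length $\sim\sqrt{T/|k|}$ thanks to the sharp commutator bound and \eqref{eq:visc_ode3} — to transfer the inviscid low-frequency decay to $f^\nu_k$, optimize over the cut-off $R$ via \eqref{eq:highfreq0}, and contradict the choice of $\delta$; iterating then yields the exponential decay. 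I expect the main obstacle to be the consistent bookkeeping of the two independent scales $\nu$ and $|k|$ through every step — in particular proving the fractional-power low-frequency estimate above and checking that its gain survives the $R$-optimization — since it is exactly this accounting that separates the improved exponent $\tfrac{4-p_\alpha}{4+p_\alpha}$ from the generic $\tfrac{2}{2+p_\alpha}$; the boundary contributions at $r=0,1$, handled via the no-flux condition \eqref{eq:nofluxBC} and the embedding \eqref{eq:sob}, are a secondary technical point.
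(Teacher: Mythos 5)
Your preparatory steps (checking the abstract hypotheses, the time rescaling $s=|k|t$ that reduces everything to $k=1$ with effective diffusivity $\nu/|k|$, and the sharp commutator bound $|\Re\l Bf,\Delta_k f\r|\le \alpha|k|\,\|f\|_H\|f\|_{H^1}$) all match the paper. The gap is in the mechanism you invoke for the improvement from $2/(2+p_\alpha)$ to $q_\alpha=(4-p_\alpha)/(4+p_\alpha)$: everything hinges on the claimed frequency-localized mixing bound
\begin{align}
\|P_{\le R}f_k(t)\|_H^2\ \lesssim\ (|k|t)^{-2p_\alpha}\,R^{\,1-p_\alpha/2}\,\|f^{in}_k\|_{H^1}^2,
\end{align}
equivalently an $H^{-s}$ decay at rate $t^{-p_\alpha}$ with $s=1-p_\alpha/2$. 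You do not prove it (you flag it yourself as the main obstacle), and in fact it is \emph{false} for every $\alpha\in[1,3)$, which includes the cases $\alpha=1,2$. Indeed, summing it over dyadic spectral shells gives $\|f_k(t)\|_{H^{-s'}}\lesssim (|k|t)^{-p_\alpha}\|f^{in}_k\|_{H^1}$ for any $s'>1-p_\alpha/2$. On the other hand, for the explicit solution $f_k(t)=\e^{-iktr^\alpha}f^{in}_k$ one has $\|f_k(t)\|_{H^1}\le C_\alpha(1+|k|t)\|f^{in}_k\|_{H^1}$, hence by interpolation $\|f_k(t)\|_{H^{s'}}\lesssim (1+|k|t)^{s'}$ for fixed smooth data, and then the Cauchy--Schwarz inequality $\|f^{in}_k\|_H^2=\|f_k(t)\|_H^2\le\|f_k(t)\|_{H^{s'}}\|f_k(t)\|_{H^{-s'}}$ forces the universal lower bound $\|f_k(t)\|_{H^{-s'}}\gtrsim(1+|k|t)^{-s'}$: no negative norm can decay faster than the positive norms grow. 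Your claim therefore requires $p_\alpha\le s'$ for every $s'>1-p_\alpha/2$, i.e.\ $p_\alpha\le 2/3$, i.e.\ $\alpha\ge 3$. The algebra is seductive, since with $s=1-p_\alpha/2$ the exponent $q_s=\frac{1+s}{1+s+p_\alpha}$ of Corollary \ref{cor:mixHs} does equal $q_\alpha$, but the hypothesis feeding it cannot hold in the range $\alpha\in[1,3)$, and even for $\alpha\ge3$ you give no proof of it.

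The paper reaches $q_\alpha$ by a different mechanism, using precisely the ingredient you derived and then set aside. It keeps the crude low-frequency bound \eqref{eq:lowfreq0} with the full power $R^1$, and instead exploits the fact that the sharp commutator estimate is \emph{linear} in $\|f^\nu\|_{H^1}$: in the proximity estimate \eqref{eq:close1spiral} the term $\nu\int\|f^\nu\|_{H^1}^2\,\dd s$ of \eqref{eq:close1} is replaced by $\nu\sqrt{t}\,\bigl(\int\|f^\nu\|_{H^1}^2\,\dd s\bigr)^{1/2}$, and this remains small on a sub-window of length $\nu^{-(1+q_\alpha)/4}$ (in rescaled units), strictly longer than the square-root window $\nu^{-q_\alpha/2}$ that the quadratic bound \eqref{eq:Bassumption} permits. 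The inviscid mixing then acts for this longer time, and the contradiction together with the optimization in $R$ closes with the balance $q_\alpha(4+p_\alpha)=4-p_\alpha$. Your write-up instead fixes the sub-window at $\sqrt{T/|k|}$, the same square-root choice as in Theorem \ref{thm:abspolymix}, and, as you yourself observe, with the $R^1$ bound this can only produce the generic exponent $2/(2+p_\alpha)$. So, as written, your argument proves only what a direct application of Theorem \ref{thm:abspolymix} already gives; the stated exponent $q_\alpha$ is not reached.
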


\begin{remark}
The above result is an enhanced dissipation estimate for each spherical harmonic of the solution to \eqref{eq:viscousspiralpol},
in which the dependence of the time-scale is made precise also in the angular frequency $k\neq 0$. The constraint 
$\nu |k|^{-1}<1$ is very natural, and relevant in hypoelliptic problems, because it can help to quantify the regularization of the
solution in the angular variable, even if dissipation in the angular direction is not present (see \cite{BCZ15} for the planar shear flow case).
\end{remark}

\begin{proof}[Proof of Theorem \ref{thm:enhancespiral}]
We begin by treating the case $k=1$. We will see at the end that the general case will follow with
a simple time rescaling.
In order to further lighten the notation in the proof, we omit the dependence on $k=1$ of the 
solutions to \eqref{eq:inviscidspiralpolfourier1} and \eqref{eq:viscousspiralpolfourier1}.
Clearly, \eqref{eq:visc_ode} holds in the same way, as
\begin{align}\label{eq:basicener}
\ddt \|f^{\nu}\|^2_{H}+2\nu \| f^\nu\|^2_{H^1}=0.
\end{align}
Besides the dependence on $k$ of all the constants, the main difference with the proof of Theorem \ref{thm:abspolymix}
consists in an estimate on the operator $B=i  r^\alpha $ that improves the second inequality in \eqref{eq:Bassumption}.
Indeed, using the antisymmetry of $B$ and the boundary condition \eqref{eq:nofluxBC}, we have
\begin{align}
|\Re\l Bf^\nu, \Delta f^\nu\r|
=|\Re\l ir^\alpha f^\nu, (\de_{rr} +r^{-1} \de_r)f^\nu\r |
=\alpha|\Re\l ir^{\alpha-1} f^\nu, \de_{r} f^\nu\r |,
\end{align}
implying
\begin{align}
|\Re\l Bf^\nu, \Delta f^\nu\r|\leq \alpha  \| f^\nu\|_H\|f^\nu\|_{H^1}.
\end{align}
In turn (compare with \eqref{eq:visc_ode2}-\eqref{eq:visc_ode3}), we deduce that
\begin{align}\label{eq:H1spafsfsa}
\ddt \|f^{\nu}\|^2_{H^1}+2\nu \| \Delta f^\nu\|^2_{H}\leq 2 \alpha  \| f^{\nu}\|_H\|f^\nu\|_{H^1}.
\end{align}
while \eqref{eq:close1} then becomes
\begin{align}\label{eq:close1spiral}
 \| f^\nu(t)-f(t)\|^2_{H}
&\leq\| f^\nu(\tau_0)-f^{\tau_0}\|^2_{H}\notag\\
&\quad+ \sqrt{ t}\left(4\alpha \nu \|f^\nu(\tau_0)\|_{H}\int_{\tau_0}^{\tau_0+t}\|f^\nu(s)\|_{H^1}\dd s+2\nu\|f^{\nu}(\tau_0)\|^2_{H^1}\right)^{1/2}\| f^{\tau_0}\|_{H}\notag\\
&\leq\| f^\nu(\tau_0)-f^{\tau_0}\|^2_{H}\notag\\
&\quad+ \sqrt{ t}\left(4\alpha \nu \|f^\nu(\tau_0)\|_{H}\sqrt{ t}\left(\int_{\tau_0}^{\tau_0+t}\|f^\nu(s)\|^2_{H^1}\dd s\right)^{1/2}+2\nu\|f^{\nu}(\tau_0)\|^2_{H^1}\right)^{1/2}\| f^{\tau_0}\|_{H}.
\end{align}
Mimicking the contradiction argument of Theorem \ref{thm:abspolymix},
we first show that for all $\nu<1$, we have the inequality 
\begin{align}\label{decay delta1spiral}
\nu\int_{0}^{\nu^{-q_\alpha}}\|f^\nu(t)\|_{H^1}^2\dd t\geq \delta_\alpha\|f^{in}\|^2_{H},
\end{align}
where
\begin{align}\label{choice of delta1spiral}
    \delta_\alpha=\frac{1}{64}\min\left\{\frac{1}{64(1+\alpha^2)},\frac{1}{a_\alpha 4^{p_\alpha}}\right\}.
\end{align}
As above, assuming $\|f^{in}\|_{H}=1$, there exists a $\tau_1\in [0,\nu^{-q_\alpha}]$ so that 
\begin{align}\label{eq:deftau11spiral}
\nu\int_{\tau_1}^{\tau_1+\nu^{-\frac{1+q_\alpha}{4}}}\|f^\nu(s)\|_{H^1}^2\dd s< 2\delta_\alpha \nu^{q_\alpha-\frac{1+q_\alpha}{4}},
\end{align}
from which we infer the existence of $\tau_0\in [\tau_1,\tau_1+\nu^{-\frac{1+q_\alpha}{4}}/2]$ such that 
\begin{align}\label{reversepoicareattau01spiral}
\nu\|f^\nu(\tau_0)\|_{H^1}^2< 4\delta_\alpha\nu^{q_\alpha} 
\end{align}
and
\begin{align}\label{boundtau01spiral}
    \nu\int_{\tau_0}^{\tau_0+\nu^{-\frac{1+q_\alpha}{4}}/2}\|f^\nu(s)\|_{H^1}^2\dd s< 2\delta_\alpha \nu^{q_\alpha-\frac{1+q_\alpha}{4}}.
\end{align}
Now we take $f^{\tau_0}=f^\nu(\tau_0)$ as initial datum for the inviscid problem \eqref{eq:inviscidspiralpolfourier1} for $k=1$ and with initial time $\tau_0$ and denote the solution by $f(t+\tau_0)$ with $t\geq 0$. Using that $\|f^\nu(\tau_0)\|_H\leq 1$, the estimate on the proximity of the two flows \eqref{eq:close1spiral}, the properties of $\tau_0$ \eqref{reversepoicareattau01spiral} and \eqref{boundtau01spiral}, we have
\begin{align}\label{eq:close2_1spiral}
\| f^\nu(\tau_0+t)-f(\tau_0+t)\|^2_{H} 
&\leq\sqrt{ t}\left(4\alpha \nu \sqrt{ t}\left(\int_{\tau_0}^{\tau_0+t}\|f^\nu(s)\|^2_{H^1}\dd s\right)^{1/2}+2\nu\|f^{\tau_0}\|^2_{H^1}\right)^{1/2}\notag\\
&\leq \sqrt{\frac{ \nu^{-\frac{1+q_\alpha}{4}}}{2}}\left(8\alpha\delta_\alpha\nu^{\frac{1+q_\alpha}{4}} +8\delta_\alpha\nu^{q_\alpha}\right)^{1/2}\leq\frac{1}{4},
\end{align}
for all $t\in [0,\frac12 \nu^{-\frac{1+q_\alpha}{4}}]$, thanks to our choice of $\delta_\alpha$ and using that $\nu<1$. Now, following
the proof of Theorem \ref{thm:abspolymix}, we use the mixing estimate \eqref{ineq:spiraldecay} and \eqref{reversepoicareattau01spiral} to find that for any $R\geq 1$ and any $t\in [\frac{1}{4}\nu^{-\frac{1+q_\alpha}{4}},\frac{1}{2}\nu^{-\frac{1+q_\alpha}{4}}]$
the inviscid problem satisfies 
\begin{align}\label{eq:mixlowspiral}
\| P_{\leq R}f(\tau_0+t)\|^2_{H} 
\leq\frac{a_\alpha^2 R}{t^{2p_\alpha}}\|f^{\tau_0}\|^2_{H^1} \leq  4^{2p_\alpha+1}a_\alpha^2 \delta_\alpha\nu^{\frac{q_\alpha+1}{2}p_\alpha+q_\alpha-1}R.
\end{align}
Consequently, 
\begin{align}\label{eq:highfreq3_11spiral}
\| f^\nu(\tau_0+t)\|^2_{H^1}&\geq \frac{R}{8}\left(1 -4^{2p_\alpha+2}a_\alpha^2 \delta_\alpha\nu^{\frac{q_\alpha+1}{2}p_\alpha+q_\alpha-1}R\right), \qquad \forall t\in \left[\frac{1}{4}\nu^{-\frac{1+q_\alpha}{4}},\frac{1}{2}\nu^{-\frac{1+q_\alpha}{4}} \right],
\end{align}
and an optimization in $R$ leads to
\begin{align}
\| f^\nu(\tau_0+t)\|^2_{H^1}\geq\frac{1}{512}\frac{1}{4^{2p}a_\alpha^2 \delta_\alpha\nu^{\frac{q_\alpha+1}{2}p_\alpha+q_\alpha-1}}, \qquad \forall t\in \left[\frac{1}{4}\nu^{-\frac{1+q_\alpha}{4}},\frac{1}{2}\nu^{-\frac{1+q_\alpha}{4}} \right].\label{eq:finalestforH1spiral}
\end{align}
Integrating over $\left(\frac{1}{4}\nu^{-\frac{1+q_\alpha}{4}},\frac{1}{2}\nu^{-\frac{1+q_\alpha}{4}}\right)$ and using the bound \eqref{boundtau01spiral}, we obtain
\begin{align}
2\delta_\alpha \nu^{q_\alpha-\frac{1+q_\alpha}{4}} >\frac{1}{2048}\frac{\nu^{-\frac{1+q_\alpha}{4}}}{4^{2p}a^2 \delta_\alpha\nu^{\frac{q_\alpha+1}{2}p_\alpha+q_\alpha-1}}.
\end{align}
By re-arranging and recalling that $q_\alpha(p_\alpha+4)-4+p_\alpha=0$, we get  that 
\begin{align}
\delta_\alpha^2>\frac{1}{4096}\frac{1}{4^{2p} a_\alpha^2},
\end{align}
which contradicts our choice of $\delta_\alpha$ \eqref{choice of delta1spiral} and proves the desired estimate \eqref{decay delta1spiral}.
The conclusion (for $k=1$) then follows in the same way as in Theorem \ref{thm:abspolymix}. Now, if $k\neq 1$,
it is not hard to see that if $f^\nu_k$ solves \eqref{eq:viscousspiralpolfourier1}, then 
\begin{align}
\tilde{f}^\nu(t,r)=f^\nu_k(|k|t,r)
\end{align}
solves
\begin{align}
\begin{cases}
\de_t \tilde{f}^\nu+\mathrm{sign}(k)  i r^{\alpha}\tilde{f}^\nu=\nu |k|^{-1}\Delta \tilde{f}^\nu , \quad &\text{in } r\in [0,1), \ t\geq 0,\\
\tilde{f}^\nu(0)=f^{in}_{k}, \quad &\text{in } r\in [0,1),
\end{cases}
\end{align} 
Hence, the above proof applies to $\tilde{f}^\nu$ (the extra factor $\mathrm{sign}(k)$ is irrelevant for the mixing estimate \eqref{ineq:spiraldecay}) and we obtain that 
\begin{align}
\|f^\nu_k(t)\|=\|\tilde{f}^\nu(|k|t)\|_H\leq \e^{-c_0\nu^{q_\alpha} |k|^{1-q_\alpha} t} \|f^{in}_{k}\|_H,
\end{align}
concluding the proof.
\end{proof}

\subsection*{Acknowledgements}
The authors would like to thank the following people for helpful discussions: Jacob Bedrossian, Oliver Butterley, Gautam Iyer, Lucia Simonelli.
MCZ was partially supported by NSF grant DMS-1713886. MGD was partially supported by EPSRC grant number EP/P031587/1.

\begin{bibdiv}
\begin{biblist}

\bib{ACM16}{article}{
      author={{Alberti}, G.},
      author={{Crippa}, G.},
      author={{Mazzucato}, A.~L.},
       title={{Exponential self-similar mixing by incompressible flows}},
        date={2016-05},
     journal={ArXiv e-prints},
      eprint={1605.02090},
}

\bib{anosov1967some}{article}{
      author={Anosov, Dmitry~Victorovich},
      author={Sinai, Yakov~Grigor'evich},
       title={Some smooth ergodic systems},
        date={1967},
     journal={Russian Mathematical Surveys},
      volume={22},
      number={5},
       pages={103\ndash 167},
}

\bib{BajerEtAl01}{article}{
      author={Bajer, Konrad},
      author={Bassom, Andrew~P},
      author={Gilbert, Andrew~D},
       title={Accelerated diffusion in the centre of a vortex},
        date={2001},
     journal={Journal of Fluid Mechanics},
      volume={437},
       pages={395\ndash 411},
}

\bib{BDL18}{article}{
      author={Baladi, Viviane},
      author={Demers, Mark~F.},
      author={Liverani, Carlangelo},
       title={Exponential decay of correlations for finite horizon sinai
  billiard flows},
        date={2018},
     journal={Invent. Math.},
      volume={211},
      number={1},
       pages={39\ndash 177},
}

\bib{BW13}{article}{
      author={Beck, Margaret},
      author={Wayne, C.~Eugene},
       title={Metastability and rapid convergence to quasi-stationary bar
  states for the two-dimensional {N}avier-{S}tokes equations},
        date={2013},
     journal={Proc. Roy. Soc. Edinburgh Sect. A},
      volume={143},
      number={5},
       pages={905\ndash 927},
         url={http://dx.doi.org/10.1017/S0308210511001478},
}

\bib{BCZV17}{article}{
      author={{Bedrossian}, J.},
      author={{Coti Zelati}, M.},
      author={{Vicol}, V.},
       title={{Vortex axisymmetrization, inviscid damping, and vorticity
  depletion in the linearized 2D Euler equations}},
        date={2017-11},
     journal={ArXiv e-prints},
      eprint={1711.03668},
}

\bib{BGM15I}{article}{
      author={{Bedrossian}, J.},
      author={{Germain}, P.},
      author={{Masmoudi}, N.},
       title={{Dynamics near the subcritical transition of the 3D Couette flow
  I: Below threshold case}},
        date={2015-06},
     journal={ArXiv e-prints},
      eprint={1506.03720},
}

\bib{BGM15II}{article}{
      author={{Bedrossian}, J.},
      author={{Germain}, P.},
      author={{Masmoudi}, N.},
       title={{Dynamics near the subcritical transition of the 3D Couette flow
  II: Above threshold case}},
        date={2015-06},
     journal={ArXiv e-prints},
      eprint={1506.03721},
}

\bib{BVW16}{article}{
      author={Bedrossian, J.},
      author={Vicol, V.},
      author={Wang, F.},
       title={The {S}obolev stability threshold for {2D} shear flows near
  {C}ouette},
        date={2016},
     journal={To appear in J. Nonlin. Sci.. Preprint: arXiv:1604.01831},
}

\bib{BCZ15}{article}{
      author={Bedrossian, Jacob},
      author={Coti~Zelati, Michele},
       title={Enhanced dissipation, hypoellipticity, and anomalous small noise
  inviscid limits in shear flows},
        date={2017},
     journal={Arch. Ration. Mech. Anal.},
      volume={224},
      number={3},
       pages={1161\ndash 1204},
}

\bib{BCZGH15}{article}{
      author={Bedrossian, Jacob},
      author={Coti~Zelati, Michele},
      author={Glatt-Holtz, Nathan},
       title={Invariant {M}easures for {P}assive {S}calars in the {S}mall
  {N}oise {I}nviscid {L}imit},
        date={2016},
     journal={Comm. Math. Phys.},
      volume={348},
      number={1},
       pages={101\ndash 127},
}

\bib{BGM15III}{article}{
      author={Bedrossian, Jacob},
      author={Germain, Pierre},
      author={Masmoudi, Nader},
       title={On the stability threshold for the 3{D} {C}ouette flow in
  {S}obolev regularity},
        date={2017},
     journal={Ann. of Math. (2)},
      volume={185},
      number={2},
       pages={541\ndash 608},
}

\bib{BM15}{article}{
      author={Bedrossian, Jacob},
      author={Masmoudi, Nader},
       title={Inviscid damping and the asymptotic stability of planar shear
  flows in the 2{D} {E}uler equations},
        date={2015},
     journal={Publ. Math. Inst. Hautes \'Etudes Sci.},
      volume={122},
       pages={195\ndash 300},
         url={http://dx.doi.org/10.1007/s10240-015-0070-4},
}

\bib{BMV14}{article}{
      author={Bedrossian, Jacob},
      author={Masmoudi, Nader},
      author={Vicol, Vlad},
       title={Enhanced dissipation and inviscid damping in the inviscid limit
  of the {N}avier-{S}tokes equations near the two dimensional {C}ouette flow},
        date={2016},
     journal={Arch. Ration. Mech. Anal.},
      volume={219},
      number={3},
       pages={1087\ndash 1159},
         url={http://dx.doi.org/10.1007/s00205-015-0917-3},
}

\bib{Bressan03}{article}{
      author={Bressan, Alberto},
       title={A lemma and a conjecture on the cost of rearrangements},
        date={2003},
     journal={Rend. Sem. Mat. Univ. Padova},
      volume={110},
       pages={97\ndash 102},
}

\bib{CKRZ08}{article}{
      author={Constantin, P.},
      author={Kiselev, A.},
      author={Ryzhik, L.},
      author={Zlatos, A.},
       title={Diffusion and mixing in fluid flow},
        date={2008},
     journal={Ann. of Math. (2)},
      volume={168},
      number={2},
       pages={643\ndash 674},
         url={http://dx.doi.org/10.4007/annals.2008.168.643},
}

\bib{CV12}{article}{
      author={Constantin, Peter},
      author={Vicol, Vlad},
       title={Nonlinear maximum principles for dissipative linear nonlocal
  operators and applications},
        date={2012},
     journal={Geom. Funct. Anal.},
      volume={22},
      number={5},
       pages={1289\ndash 1321},
         url={http://dx.doi.org/10.1007/s00039-012-0172-9},
}

\bib{CC04}{article}{
      author={C{\'o}rdoba, Antonio},
      author={C{\'o}rdoba, Diego},
       title={A maximum principle applied to quasi-geostrophic equations},
        date={2004},
     journal={Comm. Math. Phys.},
      volume={249},
      number={3},
       pages={511\ndash 528},
         url={http://dx.doi.org/10.1007/s00220-004-1055-1},
}

\bib{CZZ18}{article}{
      author={{Coti Zelati}, M.},
      author={{Zillinger}, C.},
       title={{On degenerate circular and shear flows: the point vortex and
  power law circular flows}},
        date={2018-01},
     journal={ArXiv e-prints},
      eprint={1801.07371},
}

\bib{CZ15}{article}{
      author={Coti~Zelati, Michele},
       title={Long-{T}ime {B}ehavior and {C}ritical {L}imit of {S}ubcritical
  {SQG} {E}quations in {S}cale-{I}nvariant {S}obolev {S}paces},
        date={2018},
     journal={J. Nonlinear Sci.},
      volume={28},
      number={1},
       pages={305\ndash 335},
}

\bib{CZKV15}{article}{
      author={Coti~Zelati, Michele},
      author={Kalita, Piotr},
       title={Smooth attractors for weak solutions of the {SQG} equation with
  critical dissipation},
        date={2017},
     journal={Discrete Contin. Dyn. Syst. Ser. B},
      volume={22},
      number={5},
       pages={1857\ndash 1873},
}

\bib{CLS17}{article}{
      author={{Crippa}, G.},
      author={{Luc{\`a}}, R.},
      author={{Schulze}, C.},
       title={{Polynomial mixing under a certain stationary Euler flow}},
        date={2017-07},
     journal={ArXiv e-prints},
      eprint={1707.09909},
}

\bib{Deng2013}{article}{
      author={Deng, Wen},
       title={Resolvent estimates for a two-dimensional non-self-adjoint
  operator},
        date={2013},
     journal={Commun. Pure Appl. Anal.},
      volume={12},
      number={1},
       pages={547\ndash 596},
}

\bib{DubrulleNazarenko94}{article}{
      author={Dubrulle, B},
      author={Nazarenko, S},
       title={On scaling laws for the transition to turbulence in uniform-shear
  flows},
        date={1994},
     journal={Euro. Phys. Lett.},
      volume={27},
      number={2},
       pages={129},
}

\bib{Gallay2017}{article}{
      author={{Gallay}, T.},
       title={{Enhanced dissipation and axisymmetrization of two-dimensional
  viscous vortices}},
        date={2017-07},
     journal={ArXiv e-prints},
      eprint={1707.05525},
}

\bib{EZ}{article}{
      author={{Elgindi}, T.~M.},
      author={{Zlatos}, A.},
       title={{On universal mixers in all dimensions}},
        date={2018},
     journal={Preprint},
      eprint={},
}

\bib{GNRS18}{article}{
      author={{Grenier}, E.},
      author={{Nguyen}, T.~T.},
      author={{Rousset}, F.},
      author={{Soffer}, A.},
       title={{Linear inviscid damping and enhanced viscous dissipation of
  shear flows by using the conjugate operator method}},
        date={2018-04},
     journal={ArXiv e-prints},
      eprint={1804.08291},
}

\bib{hopf1939statistik}{book}{
      author={Hopf, Eberhard},
       title={{Statistik der geod{\"a}tischen Linien in Mannigfaltigkeiten
  negativer Kr{\"u}mmung}},
        date={1939},
}

\bib{IMM17}{article}{
      author={{Ibrahim}, S.},
      author={{Maekawa}, Y.},
      author={{Masmoudi}, N.},
       title={{On pseudospectral bound for non-selfadjoint operators and its
  application to stability of Kolmogorov flows}},
        date={2017-10},
     journal={ArXiv e-prints},
      eprint={1710.05132},
}

\bib{IKX14}{article}{
      author={Iyer, Gautam},
      author={Kiselev, Alexander},
      author={Xu, Xiaoqian},
       title={Lower bounds on the mix norm of passive scalars advected by
  incompressible enstrophy-constrained flows},
        date={2014},
     journal={Nonlinearity},
      volume={27},
      number={5},
       pages={973\ndash 985},
}

\bib{Jabin16}{article}{
      author={Jabin, Pierre-Emmanuel},
       title={Critical non-{S}obolev regularity for continuity equations with
  rough velocity fields},
        date={2016},
     journal={J. Differential Equations},
      volume={260},
      number={5},
       pages={4739\ndash 4757},
}

\bib{Kelvin87}{article}{
      author={Kelvin, Lord},
       title={Stability of fluid motion: rectilinear motion of viscous fluid
  between two parallel plates},
        date={1887},
     journal={Phil. Mag.},
      volume={24},
      number={5},
       pages={188\ndash 196},
}

\bib{LatiniBernoff01}{article}{
      author={Latini, M.},
      author={Bernoff, A.J.},
       title={Transient anomalous diffusion in {Poiseuille} flow},
        date={2001},
     journal={Journal of Fluid Mechanics},
      volume={441},
       pages={399\ndash 411},
}

\bib{LiWeiZhang2017}{article}{
      author={{Li}, T.},
      author={{Wei}, D.},
      author={{Zhang}, Z.},
       title={{Pseudospectral and spectral bounds for the Oseen vortices
  operator}},
        date={2017-01},
     journal={ArXiv e-prints},
      eprint={1701.06269},
}

\bib{LWZ18}{article}{
      author={{Li}, T.},
      author={{Wei}, D.},
      author={{Zhang}, Z.},
       title={{Pseudospectral bound and transition threshold for the 3D
  Kolmogorov flow}},
        date={2018-01},
     journal={ArXiv e-prints},
      eprint={1801.05645},
}

\bib{LX17}{article}{
      author={{Lin}, Z.},
      author={{Xu}, M.},
       title={{Metastability of Kolmogorov flows and inviscid damping of shear
  flows}},
        date={2017-07},
     journal={ArXiv e-prints},
      eprint={1707.00278},
}

\bib{LTD11}{article}{
      author={Lin, Zhi},
      author={Thiffeault, Jean-Luc},
      author={Doering, Charles~R.},
       title={Optimal stirring strategies for passive scalar mixing},
        date={2011},
     journal={J. Fluid Mech.},
      volume={675},
       pages={465\ndash 476},
         url={http://dx.doi.org/10.1017/S0022112011000292},
}

\bib{Liv04}{article}{
      author={Liverani, Carlangelo},
       title={On contact {A}nosov flows},
        date={2004},
     journal={Ann. of Math. (2)},
      volume={159},
      number={3},
       pages={1275\ndash 1312},
}

\bib{LLNMD12}{article}{
      author={Lunasin, Evelyn},
      author={Lin, Zhi},
      author={Novikov, Alexei},
      author={Mazzucato, Anna},
      author={Doering, Charles~R.},
       title={Optimal mixing and optimal stirring for fixed energy, fixed
  power, or fixed palenstrophy flows},
        date={2012},
     journal={J. Math. Phys.},
      volume={53},
      number={11},
       pages={115611, 15},
}

\bib{MV11}{article}{
      author={Mouhot, Cl{\'e}ment},
      author={Villani, C{\'e}dric},
       title={On {L}andau damping},
        date={2011},
     journal={Acta Math.},
      volume={207},
      number={1},
       pages={29\ndash 201},
         url={http://dx.doi.org/10.1007/s11511-011-0068-9},
}

\bib{RS79-3}{book}{
      author={Reed, Michael},
      author={Simon, Barry},
       title={Methods of modern mathematical physics. {III}},
   publisher={Academic Press [Harcourt Brace Jovanovich, Publishers], New
  York-London},
        date={1979},
        note={Scattering theory},
}

\bib{RhinesYoung83}{article}{
      author={Rhines, P.B.},
      author={Young, W.R.},
       title={How rapidly is a passive scalar mixed within closed
  streamlines?},
        date={1983},
     journal={Journal of Fluid Mechanics},
      volume={133},
       pages={133\ndash 145},
}

\bib{Seis13}{article}{
      author={Seis, Christian},
       title={Maximal mixing by incompressible fluid flows},
        date={2013},
     journal={Nonlinearity},
      volume={26},
      number={12},
       pages={3279\ndash 3289},
}

\bib{sinai1961geodesic}{inproceedings}{
      author={Sinai, Ya~G},
       title={Geodesic flows on compact surfaces of negative curvature},
        date={1961},
   booktitle={Dokl. akad. nauk sssr},
      volume={136},
       pages={549\ndash 552},
}

\bib{BigStein}{book}{
      author={Stein, Elias~M.},
       title={Harmonic analysis: real-variable methods, orthogonality, and
  oscillatory integrals},
      series={Princeton Mathematical Series},
   publisher={Princeton University Press, Princeton, NJ},
        date={1993},
      volume={43},
        note={With the assistance of Timothy S. Murphy, Monographs in Harmonic
  Analysis, III},
}

\bib{Tartar}{book}{
      author={Tartar, Luc},
       title={An introduction to {S}obolev spaces and interpolation spaces},
      series={Lecture Notes of the Unione Matematica Italiana},
   publisher={Springer, Berlin; UMI, Bologna},
        date={2007},
      volume={3},
}

\bib{Villani09}{article}{
      author={Villani, C{\'e}dric},
       title={Hypocoercivity},
        date={2009},
     journal={Mem. Amer. Math. Soc.},
      volume={202},
      number={950},
       pages={iv+141},
         url={http://dx.doi.org/10.1090/S0065-9266-09-00567-5},
}

\bib{WZ18}{article}{
      author={{Wei}, D.},
      author={{Zhang}, Z.},
       title={{Transition threshold for the 3D Couette flow in Sobolev space}},
        date={2018-03},
     journal={ArXiv e-prints},
      eprint={1803.01359},
}

\bib{WZZkolmo17}{article}{
      author={{Wei}, D.},
      author={{Zhang}, Z.},
      author={{Zhao}, W.},
       title={{Linear inviscid damping and enhanced dissipation for the
  Kolmogorov flow}},
        date={2017-11},
     journal={ArXiv e-prints},
      eprint={1711.01822},
}

\bib{WZZ17}{article}{
      author={{Wei}, D.},
      author={{Zhang}, Z.},
      author={{Zhao}, W.},
       title={{Linear inviscid damping and vorticity depletion for shear
  flows}},
        date={2017-04},
     journal={ArXiv e-prints},
      eprint={1704.00428},
}

\bib{WZZ15}{article}{
      author={Wei, Dongyi},
      author={Zhang, Zhifei},
      author={Zhao, Weiren},
       title={Linear inviscid damping for a class of monotone shear flow in
  {S}obolev spaces},
        date={2018},
     journal={Comm. Pure Appl. Math.},
      volume={71},
      number={4},
       pages={617\ndash 687},
}

\bib{YZ17}{article}{
      author={Yao, Yao},
      author={Zlatos, Andrej},
       title={Mixing and un-mixing by incompressible flows},
        date={2017},
     journal={J. Eur. Math. Soc. (JEMS)},
      volume={19},
      number={7},
       pages={1911\ndash 1948},
}

\bib{Zillinger18}{article}{
      author={{Zillinger}, C.},
       title={{On geometric and analytic mixing scales: comparability and
  convergence rates for transport problems}},
        date={2018-04},
     journal={ArXiv e-prints},
      eprint={1804.11299},
}

\bib{Zillinger15}{article}{
      author={Zillinger, Christian},
       title={Linear inviscid damping for monotone shear flows in a finite
  periodic channel, boundary effects, blow-up and critical {S}obolev
  regularity},
        date={2016},
     journal={Arch. Ration. Mech. Anal.},
      volume={221},
      number={3},
       pages={1449\ndash 1509},
}

\bib{Zillinger14}{article}{
      author={Zillinger, Christian},
       title={Linear inviscid damping for monotone shear flows},
        date={2017},
     journal={Trans. Amer. Math. Soc.},
      volume={369},
      number={12},
       pages={8799\ndash 8855},
}

\bib{Zcirc17}{article}{
      author={Zillinger, Christian},
       title={On circular flows: linear stability and damping},
        date={2017},
     journal={J. Differential Equations},
      volume={263},
      number={11},
       pages={7856\ndash 7899},
}

\end{biblist}
\end{bibdiv}

\end{document}